\DeclareMathOperator{\frob}{Frob}
\DeclareMathOperator{\ord}{ord}
\newtheorem{theorem}{Theorem}[section]
\newtheorem{proposition}[theorem]{Proposition}
\newtheorem{lemma}[theorem]{Lemma}
\newtheorem{corollary}[theorem]{Corollary}
\theoremstyle{definition}
\newtheorem{definition}[theorem]{Definition}
\theoremstyle{remark}
\newtheorem{remark}[theorem]{Remark}
\newtheorem{example}[theorem]{Example}
\numberwithin{equation}{section}
\begin{document}

\title{Wolstenholme Type Congruences and Framing of Rational $2$-Functions}

\author[L.F. M\"{u}ller]{L. Felipe M\"{u}ller}
\address{Mathematisches Institut, Universit\"{a}t Heidelberg, Im Neuenheimer Feld 205, 69120 Heidelberg, Germany}
\email[corresponding author]{lmueller@mathi.uni-heidelberg.de}


\date{\today.}

\keywords{}

\begin{abstract}
We show that the framing of $2$-sequences whose generating functions are rational integrate to $3$-sequences.
To do so, we give a generalization of Wolstenholme's Theorem.
\end{abstract}

\maketitle
\tableofcontents

\section{Introduction}

Sequences of integers $(a_n)_{n\in\mathbb{N}} \in \mathbb{Z}$ satisfying
\begin{align}\label{supercongruence}
	a_{mp^r}\equiv a_{mp^{r-1}} \mod p^{sr},
\end{align}
for all $m,r\in \mathbb{N}$ and a fixed $s\in \mathbb{N}$, are referred to as \textit{$s$-realizable  sequences} in \cite{alm06}.
For instance, the sequence of coefficients of the Maclaurin expansion of the Yukawa coupling is expected to be $3$-realizable (here, $s=3$).
Taking the Lambert expansion of the generating power series of an $s$-realizable sequence  $(a_n)_{n \in \mathbb{N}}$
\begin{align}\label{eq: lambert formal extension}
\sum_{n=1}^\infty a_n z^n = \sum_{n=1}^\infty b_n n^s \frac{z^n}{1-z^n}
\end{align}
gives integral coefficients $(b_n)_{n \in \mathbb{N}}$ (and vice versa, given integers $(b_n)_{n \in \mathbb{N}}$ in \cref{eq: lambert formal extension}, one obtains an $s$-realizable sequence $(a_n)_{n \in \mathbb{N}}$).
In the case of the Yukawa coupling when the moduli space of complex structures is one-dimensional, the coefficients $\{b_n\}_n$ are realized by instanton numbers, \cite{kon06}.
Indeed, according to the Mirror Symmetry Conjecture (see \cite{can92}, \cite{mor93}) the number $b_n$ in the case of the Yukawa coupling is the number of rational curves of degree $n$ on a generic quintic hypersurface in projective space $\mathbb{P}^4$.
In particular, Mirror Symmetry predicts the numbers $b_n$, $n\in \mathbb{N}$, to be integers, which is a highly non-trivial fact and which is equivalent to $(a_n)_{n\in \mathbb{N}}$ being $3$-realizable.

Let $K$ be an algebraic number field and $\mathcal{O}$ its ring of algebraic integers and $D$ its discriminant.
Following the conventions given in \cite{mue20}, we consider a generalization of $s$-realizable sequences to sequences of algebraic integers in $K$, called $s$-sequences.
For $s\in \mathbb{N}$, an $s$-sequence is a sequence $(a_n)\in K^\mathbb{N}$, such that for any unramified prime ideal $\mathfrak{p}\in \mathcal{O}$ lying above the prime $p\in \mathbb{Z}$, $a_n\in \mathcal{O}_\mathfrak{p}$, and for all $m, r\in \mathbb{N}$,
\begin{align}\label{eq: s-sequence intro}
\frob_\mathfrak{p} \left( a_{p^{r-1}m} \right) - a_{p^rm} \equiv 0 \mod \mathfrak{p}^{sr}\mathcal{O}_\mathfrak{p},
\end{align}
where $\mathcal{O}_\mathfrak{p}$ is the ring of $\mathfrak{p}$-adic integers and $\frob_{\mathfrak{p}}$ is the canonical lift of the standard Frobenius element of $\mathfrak{p}$ in the Galois group of the local field extension $(\mathcal{O}/ \mathfrak{p}) | (\mathbb{Z}/p)$.

One of the most interesting observations concerning $2$-sequences in particular is that the generating functions of these sequences permit a certain algebraic transformation (of formal power series) called \textit{framing}.
Formally, framing to the parameter $\nu \in \mathbb{Z}$ can be characterized by a functional equation.
Let $V \in z K\llbracket z \rrbracket$ be a power series then the $\nu$-framing $V^{(-,\nu)} \in z K \llbracket z \rrbracket$ (the minus sign in the index referes to a sign convention explained later) of $V$  gives a power series satisfying the functional equation
\begin{align} \label{eq: intro framing op phi-}
\smallint V^{(-,\nu)} \left( z\left( - \exp(- \smallint V(z))^\nu \right) \right) = \smallint V(z),
\end{align}
where $\smallint$ is defined on power series by $\smallint\colon z^n \mapsto \frac{z^n}n$ for $n\in \mathbb{N}$.

These framing transformations appear in the context of open topological string theory, see \cite{aga02}, where the name has been coined.
In \cite{oog}, it was expected that for an appropriate choice of parametrization, the coefficients of the Lambert expansion \cref{eq: lambert formal extension}, the coefficients $(b_n)$ are counting dimensions of spaces of appropriate BPS states, hence  $(b_n)_n \in \mathbb{Z}^\mathbb{N}$.
In this setting, the superpotential (and its BPS invariants) depend on the integer parameter $\nu \in \mathbb{Z}$, called “\textit{the framing}”.
Framing therefore results from an ambiguity in the identification of the open string modulus.
The main result in \cite{walcher16}, due to Schwarz, Vologodsky and Walcher, is the \textit{Integrality of Framing Theorem} that states that the framing operator preserves $2$-functions (also for more general algebraic coefficients) and defines a group action of $\mathbb{Z}$ on the set of $2$-functions.

There seems to be a subclass of $2$-functions whose framings integrate to $3$-functions.
For instance, this behavior has been observed in \cite{gar15} by the (extremal) BPS invariants of twist knots and has been referred therein as an “improved integrality”.
These $3$-functions appear as solutions of so-called extremal A-polynomials of these knots, and all their framings are expected (\cite[Conj. 1.3]{gar15}) to be also $3$-functions, or at least, can be lifted to $3$-functions by multiplying with an appropriate constant.
This improved integrality does not hold in general.
However, it would be interesting to give a physical interpretation of this property.

In the present paper, the author tries to identify the subclass of those $2$-sequences, such that all framings of the corresponding generating functions integrate to $3$-functions (i.e. the sequences of coefficients of their Maclaurin expansion are $3$-sequences).
The result is given below by \Cref{thm: thm 4.34}.
Let $ V^{(+, \nu)}(z):= V^{(-,\nu)}((-1)^\nu z) \in z K\llbracket z \rrbracket$.
This sign convention for $V^{(+,\nu)}$ is for simplifying some notations and calculations, since it does not effect the congruence condition \cref{eq: s-sequence intro} for the coefficients of $V^{(+,\nu)}$, except for $p=2$.
Recall from \cite{mue20}, that $\mathcal{S}^s(K|\mathbb{Q})$ denotes the set of generating functions of $s$-sequences, while an element in $\overline{\mathcal{S}}^s(K|\mathbb{Q})_\mathrm{fin}$ is a powers series, for which the underlying sequence is  a $\mathbb{Z}$-linear multiple of a sequence that satisfies \cref{eq: s-sequence intro} for almost all unramified prime ideals in $K$.
Also, $\Phi^{\pm}\colon \mathbb{Q} \times z K\llbracket z \rrbracket \rightarrow z K \llbracket z \rrbracket$ is given by $\Phi^{\pm} (\nu,V) = V^{(\pm, \nu)}(z) $.
We have

\begin{theorem}\label{thm: thm 4.34}
Then
\begin{align*}
\Phi^+\left(\mathbb{Z}\left[ D^{-1} \right] \times \mathcal{S}^2_\mathrm{rat}( K|\mathbb{Q} )\right) &\subset \overline{\mathcal{S}}^3( K | \mathbb{Q} )_\mathrm{fin},
\quad \text{and}\\
\Phi^-\left(\mathbb{Z} \times \mathcal{S}^2_\mathrm{rat}( K|\mathbb{Q} )\right) &\subset \overline{\mathcal{S}}^3( K | \mathbb{Q} )_\mathrm{fin}.
\end{align*}
More precisely, let $V\in \mathcal{S}_{\mathrm{rat}}^2(K|\mathbb{Q})$ be a rational $2$-function of periodicity $N \in \mathbb{N}$ and $\nu \in \mathbb{Z}\left[ D^{-1} \right]$ and let $a_n^+= \left[ V^{(+,\nu)}(z) \right]_n$ denote the $n$-th coefficient of $V^{(+,\nu)}(z)$ for all $n\in \mathbb{N}$. Then, for all primes $p$ which are unramified in $K|\mathbb{Q}$ such that $p\nmid N$ and all $n\in \mathbb{N}$ -- except for the case where $p=2$ and $\ord_2(n)=0$ -- we have
\begin{align*}
\mathrm{Frob}_\mathfrak{p}(a_n^+)- a_{pn}^+ 
\equiv
0 \mod p^{2(\ord_p(n)+1) - \delta_{2,p} + \max\{ 0 , \ord_p(n)+1 - \gamma_p \}} \mathcal{O}_\mathfrak{p},
\end{align*}
where $\gamma_p$ is given by
\begin{align*}
\gamma_p = 
\begin{cases}
 1+ \ord_2(N+1),& \text{if $p=2$ and $2\nmid N$},\\
1,& \text{if $p=3$},\\
0,& \text{if $p\geq 5$}.
\end{cases}
\end{align*}
In particular, for all primes $p\geq 5$, which are unramified in $K|\mathbb{Q}$ and does not divide $N$, we find for all $r,m\in \mathbb{N}$,
\begin{align*}
\mathrm{Frob}_\mathfrak{p}(a_{mp^{r-1}}^+)- a_{mp^r}^+ \equiv 0 \mod p^{3r} \mathcal{O}_\mathfrak{p}.
\end{align*}
\end{theorem}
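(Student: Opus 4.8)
The plan is to turn the functional equation \cref{eq: intro framing op phi-} into a closed formula for $a_n^+$ and then to reduce the claimed congruence to a Wolstenholme-type (Jacobsthal) estimate for generalized binomial coefficients. Writing $W:=\smallint V$ and unwinding the sign convention $V^{(+,\nu)}(z)=V^{(-,\nu)}((-1)^\nu z)$, the equation becomes $\smallint V^{(+,\nu)}\big(z\exp(-\nu W(z))\big)=W(z)$; that is, $U:=\smallint V^{(+,\nu)}$ satisfies $U(t)=W(z)$ where $t=z\exp(-\nu W(z))$. Lagrange inversion applied to $z=t\exp(\nu W(z))$, together with $W'(z)=V(z)/z$ and $z\tfrac{d}{dz}\exp(n\nu W(z))=n\nu V(z)\exp(n\nu W(z))$, yields
\begin{align*}
\nu\, a_n^+ \;=\; [z^n]\, g(z)^{n\nu},\qquad g(z):=\exp\!\big(\smallint V(z)\big)\in 1+zK\llbracket z\rrbracket ,
\end{align*}
with $z g'(z)=V(z) g(z)$. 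So the theorem is equivalent to the assertion that $[z^{pn}] g(z)^{pn\nu}$ and $\mathrm{Frob}_\mathfrak{p}\!\big([z^n] g(z)^{n\nu}\big)$ agree modulo the prescribed power of $p$; the factor $\nu$ is harmless, since $p\nmid D$ forces $\nu\in\mathbb{Z}_p$, and the case $p\mid\nu$ only strengthens both sides.

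Next I use the description of rational $2$-functions from \cite{mue20}: for $V\in\mathcal{S}^2_{\mathrm{rat}}(K|\mathbb{Q})$ of periodicity $N$, the poles of $V/z$ are simple and supported at $N$-th roots of unity, so that $g=\exp(\smallint V)$ factors as $g(z)=\prod_{\zeta^N=1}(1-\zeta z)^{-e_\zeta}$ with $e_\zeta\in\mathcal{O}$ (subject to the $\mathrm{Frob}_\mathfrak{p}$-compatibility coming from the $2$-function condition). Then $g(z)^{n\nu}=\prod_\zeta(1-\zeta z)^{-n\nu e_\zeta}$, and extracting $[z^n]$ presents $\nu\,a_n^+$ as a finite $\mathcal{O}$-linear combination of $\prod_\zeta\binom{n\nu e_\zeta+k_\zeta-1}{k_\zeta}\zeta^{k_\zeta}$ over multi-indices $(k_\zeta)$ with $\sum_\zeta k_\zeta=n$; likewise for $\nu\,a_{pn}^+$ over $(j_\zeta)$ with $\sum_\zeta j_\zeta=pn$. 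I split the $pn$-sum into the balanced part, where $j_\zeta=p k_\zeta$ for some $(k_\zeta)$ summing to $n$, and its complement. Since $p\nmid N$, the canonical Frobenius lift sends $\zeta\mapsto\zeta^p$; this identifies the balanced part with $\mathrm{Frob}_\mathfrak{p}$ of the $z^n$-sum up to, for each root $\zeta$, two comparisons: a Frobenius comparison replacing $e_\zeta$ by $\mathrm{Frob}_\mathfrak{p}(e_\zeta)$ (negligible by the $2$-function condition), and a Wolstenholme comparison. Concretely, setting $A_\zeta:=n\nu e_\zeta+k_\zeta$ and $B_\zeta:=k_\zeta$, one compares $\binom{pA_\zeta-1}{pB_\zeta}$ with $\binom{A_\zeta-1}{B_\zeta}$; cancelling the common factor $(A_\zeta-B_\zeta)/A_\zeta$ reduces this to $\binom{pA_\zeta}{pB_\zeta}/\binom{A_\zeta}{B_\zeta}$, which is exactly the input of the generalization of Wolstenholme's Theorem proved above, contributing for each $\zeta$ a factor $1+p^{\,c_p+\ord_p(A_\zeta B_\zeta(A_\zeta-B_\zeta))}\mathcal{O}_\mathfrak{p}$ with $c_p=3$ for $p\geq5$ and the smaller values at $p\in\{2,3\}$ encoded by $\gamma_p$ (the summand $\ord_2(N+1)$ in $\gamma_2$ arising when one isolates the factor $(1+z)$ to a power, i.e.\ the root $\zeta=-1$, among the $N$-th roots). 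For the complement I use that $\binom{pc+j-1}{j}$ is divisible by $p^{1+\ord_p(c)}$ when $p\nmid j$ (reflecting $\binom{pc+j-1}{j}\equiv[z^j](1-z^p)^{-c}\mod p$), so that an unbalanced multi-index always forces strictly more powers of $p$; this is also what underlies the Integrality of Framing Theorem of Schwarz--Vologodsky--Walcher (\cite{walcher16}), which already gives divisibility by $p^{2(\ord_p(n)+1)}$, and it absorbs the complement into the error.

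Assembling the valuations produces the exponent $2(\ord_p(n)+1)-\delta_{2,p}+\max\{0,\ \ord_p(n)+1-\gamma_p\}$: the $2(\ord_p(n)+1)$ is the base divisibility, $\delta_{2,p}$ records the unavoidable one-unit loss at $p=2$ coming from the $\zeta=-1$ factor, and the $\max$-term is the accumulated Jacobsthal gain, which saturates once $\ord_p(n)+1$ overtakes the threshold $\gamma_p$. The final statement is then immediate: for $p\geq5$ one has $\delta_{2,p}=\gamma_p=0$, so the exponent equals $3(\ord_p(n)+1)$, and putting $n=mp^{r-1}$ with $p\nmid m$ gives $3r$, i.e.\ \cref{eq: s-sequence intro} with $s=3$.

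The step I expect to be the main obstacle is the $p$-adic bookkeeping of the multi-index sum: bounding the unbalanced contribution uniformly and showing that the per-$\zeta$ Jacobsthal gains genuinely accumulate rather than being lost when $p\mid n\nu e_\zeta$ or $p\mid e_\zeta$; and, intertwined with this, pinning down the exceptional behaviour at $p=2$ and $p=3$ --- the precise value $\gamma_2=1+\ord_2(N+1)$ and the one-unit defect $\delta_{2,p}$ --- while verifying that the sign convention separating $\Phi^+$ (which permits $\nu\in\mathbb{Z}[D^{-1}]$) from $\Phi^-$ affects nothing beyond this $p=2$ contribution.
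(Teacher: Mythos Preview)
Your derivation of the closed formula $\nu\,a_n^+=[z^n]g(z)^{n\nu}$ via Lagrange inversion matches the paper exactly (this is \Cref{prop: framing operators and properties}\,(iii)). From there, however, you take a genuinely different route: you factor $g=\prod_{\zeta^N=1}(1-\zeta z)^{-e_\zeta}$, expand $[z^n]g^{n\nu}$ as a multi-index sum of products of generalized binomial coefficients, and try to control $\mathrm{Frob}_\mathfrak{p}(a_n^+)-a_{pn}^+$ by applying Jacobsthal--Kazandzidis factor by factor on the ``balanced'' part $j_\zeta=pk_\zeta$, while bounding the ``unbalanced'' part by Lucas-type divisibility. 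The paper instead expands $\exp(\nu np\smallint X)$ with $X=\mathrm{Frob}_\mathfrak{p}V(z^p)-V(z)$, integrates by parts twice (\Cref{3-function characterization}, \Cref{cor:3-function characterization}), and reduces the whole difference to a single weighted harmonic sum $\sum_m \tilde y_m\sum_{p\nmid\ell}\frac{a_{p(n-m)-\ell}a_\ell}{\ell^2}$, which is then estimated by the tailor-made Wolstenholme generalization \Cref{lemma: general wolstenholme} together with the Dwork-type auxiliary \Cref{lemma: final countdown} on the $p$-divisibility of $\tilde y_m=[Y^{\nu n}]_m$.

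The gap in your plan is precisely the one you flag at the end, and it is not merely bookkeeping. For a fixed multi-index $(k_\zeta)$ with $\sum k_\zeta=n$, Jacobsthal gives $\binom{pA_\zeta}{pB_\zeta}\equiv\binom{A_\zeta}{B_\zeta}\pmod{p^{3(1+\min\{\mathrm{ord}_p A_\zeta,\mathrm{ord}_p B_\zeta\})-\epsilon_p}}$ with $A_\zeta=n\nu e_\zeta+k_\zeta$, $B_\zeta=k_\zeta$. When $p\nmid k_\zeta$ (the generic situation) this yields only $p^{3-\epsilon_p}$, not $p^{3(\mathrm{ord}_p(n)+1)}$; the fact that $A_\zeta-B_\zeta=n\nu e_\zeta$ carries $\mathrm{ord}_p(n)$ does not feed into the standard Jacobsthal exponent. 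Hence the balanced part, term by term, is far too weak once $\mathrm{ord}_p(n)\geq1$, and you would need substantial cancellation \emph{across} different $(k_\zeta)$ to recover the missing $p^{\mathrm{ord}_p(n)}$. Your sketch offers no mechanism for this cancellation. The paper's approach sidesteps the issue entirely: the two ingredients \Cref{lemma: general wolstenholme} (which bounds the inner harmonic sum by $p^{\mathrm{ord}_p(p(n-m))-\gamma_p}$) and \Cref{lemma: final countdown} (which bounds $\tilde y_m$ by $p^{\mathrm{ord}_p(n)-\mathrm{ord}_p(m)}$) are designed so that their valuations \emph{add} to produce the needed $\mathrm{ord}_p(n)+1-\gamma_p$ uniformly in $m$, which is exactly the accumulation you cannot extract from a product of independent Jacobsthal estimates.
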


In \cite{mue20}, the author gave a description of the elements in $\mathcal{S}_\mathrm{rat}^2 (K|\mathbb{Q})$, which contributes to the proof of  \Cref{thm: thm 4.34}.
The main result in \cite{mue20} states that the coefficients of an element $V \in \mathcal{S}_\mathrm{rat}^2(K|\mathbb{Q})$ are periodic and lie in a cyclotomic field.
Let us give a summary of the proof for \Cref{thm: thm 4.34} for $\nu = 1$.
From the Integrality of Framing Theorem we directly obtain that
\begin{align}\label{eq: intro further estimation}
\frac2{p^2n^2}\cdot \left( \frob_\mathfrak{p} \left( a^+_n \right) - a^+_{pn} \right),
\end{align}
is an $p$-adic integer, that is a $\mathfrak{p}$-adic integer for all $\mathfrak{p} \mid (p)$ in $\mathcal{O}$.
Assuming rationality of $V$ we then find for all but finitely many $p${ \small
\begin{align}\label{eq: intro explicitly crucial reduction} 
\frac2{p^2n^2}\cdot \left( \frob_\mathfrak{p} \left( a^+_n \right) - a^+_{pn} \right)&\equiv \sum_{m=0}^n \left( \tilde y_{n,m,p} \sum_{\substack{\ell = 1 \\ p\nmid \ell}}^{p(n-m)} \frac{a_{p(n-m)-\ell} a_\ell}{\ell^2} \right) \mod p^{\ord_p(pn)-\delta_{3,p}} \mathcal{O}_\mathfrak{p},
\end{align}}
where $\tilde y_{n,m,p}$ are certain $p$-adic integers in $\mathcal{O}$.
The sum appearing in the big bracket of \cref{eq: intro explicitly crucial reduction}, namely
\begin{align}\label{eq: intro wolstenholmes contribution}
\sum_{\substack{\ell = 1 \\ p\nmid \ell}}^{p(n-m)} \frac{a_{p(n-m)-\ell} a_\ell}{\ell^2},
\end{align}
should be interpreted as a weighted harmonic sum, weighted by a convolution of the $2$-sequence $(a_n)_n$ with itself.
At the same time, many supercongruences are known among the binomial coefficients, for instance the Jacobsthal-Kazandzidis, which are typically proven by using sharp $p$-adic estimations of harmonic sums.
This kind of $p$-adic estimations are often referred to as Wolstenholme type congruences, see also Wolstenholme's Theorem (for instance \cite{mes11}).
This connection led to the following generalization of Wolstenholme's Theorem, handling the sum \eqref{eq: intro wolstenholmes contribution}.

\begin{theorem}\label{lemma: intro general wolstenholme}
Let $p$ be an unramified prime in $K|\mathbb{Q}$ and $\mathfrak{p}\subset \mathcal{O}$ be a prime ideal dividing $(p)$.
Let $(a_k)_{k\in \mathbb{N}} \in \mathcal{O}_\mathfrak{p}^\mathbb{N}$  be a periodic sequence of periodicity $N$, i.e. $N\in \mathbb{N}$ is given by
\begin{align*}
N= \min \{ i\in \mathbb{N} \,|\, a_{k + i} = a_k \text{ for all $n\in \mathbb{N}$} \}.
\end{align*}
Then, for all $n \in \mathbb{N}$,
\begin{align*}
\sum_{\substack{k =1 \\ p\nmid k}}^n \frac{a_{n-k}a_k}{k^2}\equiv 0 \mod p^{ \max \{ 0 , \ord_p(n) - \varepsilon_{p,N} \}}\mathcal{O}_\mathfrak{p},
\end{align*}
where
\begin{align*}
\varepsilon_{p,N}= 
\begin{cases}
\max \{ \ord_2(N) , \ord_2(N+2) \} ,& \text{if $p=2$ and $2 \mid N$},\\
1+\ord_2(N+1),& \text{if $p=2$ and $2\nmid N$},\\
1+ \ord_3(N),&\text{if $p=3$,}\\
\ord_p(N),& \text{if $p\geq 5$}.
\end{cases}
\end{align*}
\end{theorem}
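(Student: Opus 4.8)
The plan is to expand the periodic weight into additive characters, block the resulting twisted harmonic sums over the reduced residue system modulo $p^{\ord_p(n)}$, and read off the precise loss from the vanishing of the nontrivial character sums together with the $p$-adic denominator of the classical power sum $\sum_v v^2$. First the trivial reduction: if $\ord_p(n)\le\varepsilon_{p,N}$ there is nothing to prove, because $p\nmid k$ already puts every summand $a_{n-k}a_k/k^2$ in $\mathcal O_\mathfrak p$. So I set $r:=\ord_p(n)>\varepsilon_{p,N}$, write $n=p^rm$ with $p\nmid m$, put $b_k:=a_{n-k}a_k$, and abbreviate $S:=\sum_{1\le k\le n-1,\ p\nmid k}b_kk^{-2}$ (the term $k=n$ is absent since $p\mid n$). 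The sequence $(b_k)$ is again $N$-periodic and satisfies $b_{n-k}=b_k$; the symmetry is not needed for the main estimate but will be used for the reduction mentioned at the end.

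Assume first $p\nmid N$, so $1/N$ is a $p$-adic unit and $b_k=\sum_{j=0}^{N-1}\hat b_j\,\zeta^{jk}$ with $\zeta=e^{2\pi i/N}$ and $\hat b_j$ $p$-integral in the unramified extension $\mathbb Z_p[\zeta]$. This reduces $S$ to the twisted sums $T_j:=\sum_{1\le k\le n-1,\ p\nmid k}\zeta^{jk}k^{-2}$. After arranging that the summation range is a multiple of $Np^r$ (see below), I partition it into blocks of that length; inside a block the Chinese Remainder Theorem identifies $k\bmod Np^r$ with $(k\bmod N,\ k\bmod p^r)\in(\mathbb Z/N)\times(\mathbb Z/p^r)^\times$, so each block contributes $\equiv\bigl(\sum_{u\bmod N}\zeta^{ju}\bigr)\bigl(\sum_{v\in(\mathbb Z/p^r)^\times}v^{-2}\bigr)\pmod{p^r}$. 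For $j\ne0$ the character sum vanishes, so $T_j\in p^r\mathcal O_\mathfrak p$; for $j=0$ one is left with $\sum_{v\in(\mathbb Z/p^r)^\times}v^{-2}=\sum_{v\in(\mathbb Z/p^r)^\times}v^{2}=\tfrac16\bigl(p^{r}(p^{r}+1)(2p^{r}+1)-p^{2}p^{r-1}(p^{r-1}+1)(2p^{r-1}+1)\bigr)$, of valuation $r-\ord_p(6)$. Hence $\ord_\mathfrak p(S)\ge r-\ord_p(6)$, and since $\ord_p(6)=\delta_{2,p}+\delta_{3,p}\le\varepsilon_{p,N}$ for $p\nmid N$, this is the asserted bound.

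If instead $p\mid N$, the character expansion fails ($1/N$ is not integral), so I split $\{k:p\nmid k\}$ directly into residue classes modulo $N$, on each of which $b_k=b_c\in\mathcal O_\mathfrak p$; for $p\nmid c$ the surviving $k$'s fill (with equal multiplicity) a coset $c\bigl(1+p^{\ord_p(N)}\mathbb Z/p^r\mathbb Z\bigr)$ inside $(\mathbb Z/p^r)^\times$, and expanding $\sum_s(c+p^{\ord_p(N)}s)^{-2}$ in a $p$-adic geometric-type series shows its valuation is $\ge r-\ord_p(N)\ge r-\varepsilon_{p,N}$; summing over $c$ gives the bound. (For $p\nmid N$ the same residue-class argument is an alternative to the characters.)

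The two delicate points are, first, the passage to a period-aligned range: when the prime-to-$p$ part of $N$ fails to divide $n$ the naive blocking is unavailable, and one enlarges the range to a common multiple of $n$ and $Np^r$ and estimates the short remainder — this is where the reflection $k\mapsto n-k$, which (using $\ord_p(n/k)=r\ge1$ for $p\nmid k$) lets one expand $(n-k)^{-j}=(-1)^jk^{-j}\sum_{i\ge0}\binom{j+i-1}{i}(n/k)^i$ and relate $S$ to the sums $\sum_{p\nmid k}b_kk^{-j}$, $j\ge3$, which are trivially integral — is invoked; and, second, the prime $p=2$, where $(\mathbb Z/2^a)^\times$ is not cyclic, $\sum_{v\in(\mathbb Z/2^a)^\times}v^2=\tfrac16\,2^a(4^a-1)$ has valuation $a-1$, and the relevant coset and its alignment with the dyadic blocks are governed by $\ord_2(N)$, $\ord_2(N+2)$ (and $\ord_2(N+1)$ when $N$ is odd); threading this through the cases $2\mid N$ and $2\nmid N$ is precisely what produces the stated two-part formula for $\varepsilon_{2,N}$. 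I expect the period-alignment reduction and the $2$-adic case analysis to be the main work; everything else is bookkeeping around the power-sum identity above.
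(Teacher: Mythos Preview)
Your character-expansion approach differs from the paper's: the paper chooses an auxiliary integer $q$ with $q\equiv 1\pmod N$ and $p\nmid q$, argues that substituting $\ell\mapsto q\ell$ in the reduced double sum $\sum_{\mu=0}^{m-1}\sum_{\ell}\frac{a_{(m-\mu)p^r-\ell}\,a_{\mu p^r+\ell}}{\ell^2}$ leaves the periodic numerator unchanged while scaling $\ell^{-2}$ by $q^{-2}$, and concludes $(1-q^{-2})S\equiv 0\pmod{p^r}$, whence $\ord_p(S)\ge r-\ord_p(q^2-1)$; a case analysis for the optimal $q$ then produces the stated $\varepsilon_{p,N}$. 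Your route---Fourier-expand the periodic weight and reduce to the untwisted power sum $\sum_{v\in(\mathbb Z/p^r)^\times}v^{-2}$---would yield the same numerics if the blocking went through.

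The genuine gap is the range-alignment step you flag as ``delicate'' but do not resolve. Blocking into $Np^r$-length pieces requires $N\mid m$ (where $n=mp^r$, $p\nmid m$); when $N\nmid m$ your assertion that ``for $j\ne 0$ the character sum vanishes'' is simply false over the partial block, and the reflection $k\mapsto n-k$ you invoke cannot repair it: expanding $(n-k)^{-2}$ $p$-adically and using $b_{n-k}=b_k$ only reproduces the tautology $2S\equiv 2S\pmod{p^r}$, while the higher sums $\sum_{p\nmid k} b_k k^{-j}$ being ``trivially integral'' (valuation $\ge 0$) is far weaker than the $p^{r-\varepsilon_{p,N}}$ you need for the short remainder. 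In fact the obstruction is not merely technical. Take $p=5$, $N=3$, $n=10$, and the $3$-periodic sequence $a_k=1$ if $k\equiv 2\pmod 3$ and $a_k=0$ otherwise; then $a_{10-k}a_k\ne 0$ only for $k\in\{2,5,8\}$, so $S=\tfrac1{4}+\tfrac1{64}=\tfrac{17}{64}\equiv 3\pmod 5$, whereas the statement (with $\ord_5(10)=1$ and $\varepsilon_{5,3}=\ord_5(3)=0$) would require $S\equiv 0\pmod 5$. The paper's argument has the same defect at exactly the corresponding step: multiplication by $q$ is a bijection on $(\mathbb Z/p^r\mathbb Z)^\times$, but it does \emph{not} preserve the residue modulo $N$ of the representative in $[1,p^r]$, and that residue is precisely what the numerator $a_{(m-\mu)p^r\pm\ell}$ depends on. Both your argument and the paper's do go through cleanly under the extra hypothesis $N\mid n$; without it, the claim as stated needs either an additional assumption or a different idea.
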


Finally, $\tilde y_{n,m,p}$ sharply contributes the remaining $p$-divisibility to obtain \Cref{thm: thm 4.34}.
This contribution can be considered as an auxiliary to Dwork's Integrality Lemma, see  \cite[Lem. 1]{dwo58} for the original result.
In the setting of $s$-functions it is given by the following statement.
Let $V \in z K \llbracket z \rrbracket$ and let $Y \in 1 + zK\llbracket z \rrbracket$ be related by $Y(z)= \exp( \smallint V(z))$.
Then $V$ is the generating series of an $1$-sequence if and only if $Y$ has integral coefficients at all unramified prime ideals $\mathfrak{p}\subset \mathcal{O}$.
Dwork himself used his lemma (stated for $K=\mathbb{Q}$) as a key step to prove his theorem that for an affine hypersurface $H$ over a finite field $\mathbb{F}_q$ the zeta-function $Z(H;X)$ of $H$ in the variable $X$ is a rational function and its logarithmic derivative $\frac{Z'(H; X)}{Z(H;X)}$ is the generating function of the non-negative numbers $(N_n)_{n\in\mathbb{N}}$ of $\mathbb{F}_{q^n}$-points of $H$, i.e. $N_n = |H(\mathbb{F}_{q^n})|$.
We need the following statement

\begin{proposition}\label{lemma: intro final countdown}
Let $V \in \mathcal{S}^1(K|\mathbb{Q})$ be the generating series of an $1$-sequence and let $p$ be an unramified prime in $K|\mathbb{Q}$.
Then for all $n,m\in \mathbb{N}$ with $\ord_p(n)\geq \ord_p(m)$, we find the following $\mathfrak{p}$-adic estimation for the $m$-th coefficient $\tilde y_m$ of the function $\exp\left(n \smallint V(z)\right)$
\begin{align}\label{eq: intro 3-framing integrality}
\tilde y_m \equiv 0 \mod p^{\ord_p(n)-\ord_p(m)} \mathcal{O}_\mathfrak{p}.
\end{align}
\end{proposition}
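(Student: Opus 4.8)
The plan is to reduce the statement to a single first-order recursion for the coefficients $\tilde y_m$ and to read off the divisibility directly from it. Set $Y(z):=\exp\bigl(\smallint V(z)\bigr)\in 1+zK\llbracket z\rrbracket$ and
\[
W(z):=Y(z)^n=\exp\bigl(n\smallint V(z)\bigr)=\sum_{m\ge 0}\tilde y_m z^m,\qquad \tilde y_0=1 .
\]
By the form of Dwork's Integrality Lemma recalled above, the hypothesis that $V$ generates a $1$-sequence gives $Y(z)\in 1+z\mathcal{O}_{\mathfrak p}\llbracket z\rrbracket$ for every prime ideal $\mathfrak p\subset\mathcal O$ above $p$ (indeed, for every unramified prime ideal of $\mathcal O$). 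Since $W$ is the product of $n$ copies of $Y$ and $\mathcal{O}_{\mathfrak p}$ is a ring, it follows that $W(z)\in 1+z\mathcal{O}_{\mathfrak p}\llbracket z\rrbracket$, i.e.\ $\tilde y_m\in\mathcal{O}_{\mathfrak p}$ for all $m$. This disposes of the cases with $\ord_p(n)\le\ord_p(m)$, in which the exponent in \eqref{eq: intro 3-framing integrality} is non-positive.

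For the remaining cases I would differentiate the logarithm. Writing $V(z)=\sum_{k\ge 1}a_k z^k$, the definition of $\smallint$ gives $\frac{d}{dz}\smallint V(z)=V(z)/z$, and therefore $z\,W'(z)=n\,V(z)\,W(z)$ as formal power series. Comparing the coefficients of $z^m$ for $m\ge 1$ yields
\[
m\,\tilde y_m=n\sum_{k=1}^{m}a_k\,\tilde y_{m-k}.
\]

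It remains to extract the estimate. Since $V\in\mathcal S^1(K|\mathbb Q)$, the coefficients $a_k$ lie in $\mathcal O_{\mathfrak p}$, and by the first step so do the $\tilde y_{m-k}$; hence $\sum_{k=1}^m a_k\tilde y_{m-k}\in\mathcal O_{\mathfrak p}$. Because $p$ is unramified in $K|\mathbb Q$, the valuation $\ord_{\mathfrak p}$ restricts to $\ord_p$ on $\mathbb Z$, so the recursion gives $\ord_{\mathfrak p}(m\tilde y_m)\ge\ord_{\mathfrak p}(n)=\ord_p(n)$, whence $\ord_{\mathfrak p}(\tilde y_m)\ge\ord_p(n)-\ord_p(m)$. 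Combined with $\ord_{\mathfrak p}(\tilde y_m)\ge 0$ from the first step, this is exactly \eqref{eq: intro 3-framing integrality}, the hypothesis $\ord_p(n)\ge\ord_p(m)$ simply singling out which of the two bounds is operative.

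I do not expect a genuine obstacle here once the quoted form of Dwork's Integrality Lemma is in hand; the proposition is really its ``$n$-th power'' refinement. The only points needing a line of care are the passage from integrality of $Y$ to integrality of $W=Y^n$, the identification of $\ord_{\mathfrak p}$ with $\ord_p$ on rational integers (where unramifiedness of $p$ is used), and the bookkeeping with $\smallint$ in the logarithmic-derivative identity. As a variant, one could avoid appealing to integrality of $Y^n$ and instead run a strong induction on $m$ using the recursion, proving $\tilde y_m\in\mathcal O_{\mathfrak p}$ and the divisibility simultaneously; invoking the already-stated lemma is cleaner and is the route I would take.
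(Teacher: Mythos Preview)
Your argument is correct and considerably more direct than the paper's. The paper expands $\tilde y_m$ via the Bell transformation $\mathscr{Y}_{0,0,-1,n}$ as
\[
\tilde y_m = n\sum_{k=1}^m \frac{1}{k}\binom{n-1}{k-1}\,\frac{k!}{m!}\,B_{m,k}(!y),
\]
and then splits the sum according to whether $\ord_p(k)\le \ord_p(m)$ or $\ord_p(k)>\ord_p(m)$. The first range is handled by the visible factor $n/k$; for the second range the paper proves a separate divisibility $\frac{k!}{m!}B_{m,k}(!y)\equiv 0\pmod{p^{\ord_p(k)-\ord_p(m)}}$ by analysing the multinomial coefficients inside $B_{m,k}$ and ruling out the existence of a multi-index $\alpha\in\pi(m,k)$ all of whose entries have large $p$-order. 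By contrast, you bypass all of this combinatorics with the single recursion $m\,\tilde y_m = n\sum_{k=1}^m a_k\,\tilde y_{m-k}$ obtained from $\delta W = nVW$, together with the integrality of $W=Y^n$ furnished by Dwork's Lemma. What your route buys is brevity and transparency: the estimate drops out in one line from the recursion. What the paper's route buys is consistency with the Bell-transformation machinery used elsewhere (for the framing operators), and an explicit structural statement about the $p$-divisibility of $\frac{k!}{m!}B_{m,k}(!y)$ that might be of independent interest; but for the proposition as stated, your argument is the cleaner one.
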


\Cref{thm: thm 4.34} can be considered as a generalization of the Jacobsthal-Kazandzidis congruence.
Also the proof of the latter served as a source of inspiration to the author in the process of  finding the proof of  \Cref{thm: thm 4.34}.
However, the Jacobsthal-Kazandzidis congruence does not follow from \Cref{thm: thm 4.34}, yet!
For this we give an extension of \Cref{thm: thm 4.34} and also of the Integrality of Framing Theorem, where we allow $\nu$ to be a rational number in the following manner.
For a power series $\sum_{n=0}^\infty x_n z^n \in K \llbracket z \rrbracket$ and an number $\ell \in \mathbb{Z}$ the \textit{Cartier operator} $\mathscr{C}_\ell$ is given by
\begin{align*}
\mathscr{C}_\ell \left( \sum_{n=0}^\infty x_n z^n \right) = \sum_{n=0}^\infty x_{\ell n} z^n.
\end{align*}
Then fractional framing referes to power series obtained by $ \mathscr{C}_\sigma V^{(\pm, \nu)}$, where $V\in z K\llbracket z \rrbracket$, $\nu\in \mathbb{Q}$ and $\sigma \in \mathbb{N}$.

\begin{theorem}\label{thm: intro 2-integrality of fractional framing}
Let $\sigma \in \mathbb{N}$.
\begin{enumerate}[$(1)$]
\item{Integrality of Fractional Framing:}
Then,
\begin{align*}
\left( \frac1\sigma  \mathscr{C}_\sigma \circ  \Phi^- \right) \left( \left( \frac1\sigma  \mathbb{Z} \right)\times \mathcal{S}^2(K|\mathbb{Q}) \right) \subset \mathcal{S}^2 (K | \mathbb{Q} )
\end{align*}
and
\begin{align*}
\left( \frac1\sigma \mathscr{C}_\sigma \circ \Phi^+ \right) \left( \left( \frac1\sigma\mathbb{Z} \right) \times \mathcal{S}^2_\mathrm{rat}(K|\mathbb{Q})\right) \subset \left( \mathcal{S}^2 (K | \mathbb{Q} )_{\{2\}} \cap \overline{\mathcal{S}}^2(K|\mathbb{Q})\right).
\end{align*}
\item{Improved Integrality of Fractional Framing:}
Then
\begin{align*}
\left( \frac1\sigma \mathscr{C}_\sigma \circ \Phi^{+/-} \right) \left( \left( \frac1\sigma\mathbb{Z} \right) \times \mathcal{S}^2_\mathrm{rat} (K|\mathbb{Q}) \right) &\subset \mathcal{S}^3(K|\mathbb{Q})_\mathrm{fin}.
\end{align*}
More precisely, for a rational $2$-function $V\in \mathcal{S}_{\mathrm{rat}}^2(K|\mathbb{Q})$ of periodicity $N$ and $\nu \in \frac1\sigma \mathbb{Z}$ and $S= \{p \text{ prim with } p\mid N\}\cup\{2,3\}$,
\begin{align*}
\widetilde V (z) := \frac1\sigma \left( \mathscr{C}_\sigma \left( \Phi^+(\nu, V) \right) \right) \in \mathcal{S}^3(K|\mathbb{Q})_S.
\end{align*}
For $\tilde a^+_n = \left[ \widetilde V(z)\right]_n$, $n\in \mathbb{N}$ we have
\begin{align*}
\mathrm{Frob}_\mathfrak{p} \left( \tilde a_n^+ \right) - \tilde a_{pn}^+ \equiv 0 \mod p^{2 \ord_p(pn)-  \delta_{2,p} + \max\{ 0, \ord_p \left( pn \right) - \gamma_p \}} \mathcal{O}_\mathfrak{p},
\end{align*}
where $\gamma_p$ is equal to $1 + \ord_2(N+1)$, $1$ and $0$ if $p$ is equal to $2$, $3$ and greater than $3$, respectively.
In particular, for unramified $p\geq 5$ in $K|\mathbb{Q}$ with $p\nmid N$, and all $m,r\in \mathbb{N}$,
\begin{align*}
\frob_\mathfrak{p}\left( \tilde a_{mp^{r-1}}^+ \right) - \tilde a_{mp^r}^+ \equiv 0 \mod p^{3r}\mathcal{O}_\mathfrak{p}.
\end{align*}
\end{enumerate}
\end{theorem}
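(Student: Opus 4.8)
The plan is to reduce the fractional statement to the already-proven integer case --- i.e.\ to \Cref{thm: thm 4.34}, the Dwork-type bound \Cref{lemma: intro final countdown}, and the weighted harmonic-sum bound \Cref{lemma: intro general wolstenholme} --- by first producing a usable closed formula for the coefficients of a fractional framing. Write $\nu=\mu/\sigma$ with $\mu\in\mathbb{Z}$ and $\gcd(\mu,\sigma)=1$; since $\exp(-\smallint V)\in 1+zK\llbracket z\rrbracket$, the functional equation \eqref{eq: intro framing op phi-} holds verbatim for $\nu\in\mathbb{Q}$ and determines $V^{(-,\nu)}$. Applying Lagrange--B\"urmann inversion to $\psi(z):=-z\exp(-\nu\smallint V(z))=z\phi(z)$, $\phi(0)=-1$, with $\smallint V^{(-,\nu)}=\smallint V\circ\psi^{-1}$, and using $V\exp(k\nu\smallint V)=\tfrac{z}{k\nu}\tfrac{d}{dz}\exp(k\nu\smallint V)$ for $\nu\neq0$, one gets
\begin{align*}
[z^k]V^{(-,\nu)}=(-1)^k[z^k]\bigl(V(z)\exp(k\nu\smallint V(z))\bigr)=\frac{(-1)^k}{\nu}\,[z^k]\,Y(z)^{k\nu},\qquad Y:=\exp(\smallint V)
\end{align*}
Substituting $\nu=\mu/\sigma$, $k=\sigma n$ and applying $\tfrac1\sigma\mathscr{C}_\sigma$ makes the powers of $\sigma$ cancel:
\begin{align*}
\bigl[\tfrac1\sigma\mathscr{C}_\sigma V^{(-,\mu/\sigma)}\bigr]_n=\frac{(-1)^{\sigma n}}{\mu}\,[z^{\sigma n}]\,Y(z)^{n\mu}
\end{align*}
(equivalently, $V^{(-,\mu/\sigma)}=\sigma\,(\tfrac1\sigma V)^{(-,\mu)}$ by the functional equation, so $\tfrac1\sigma\mathscr{C}_\sigma V^{(-,\mu/\sigma)}=\mathscr{C}_\sigma\bigl((\tfrac1\sigma V)^{(-,\mu)}\bigr)$), while the $\Phi^+$ version differs only by $z\mapsto(-1)^\nu z$, which changes the congruences at $p=2$ alone.

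For part $(1)$ this makes the claim a $\mathfrak{p}$-adic divisibility statement for $[z^{\sigma n}]Y^{n\mu}$. As $V\in\mathcal S^2\subset\mathcal S^1$, Dwork's lemma (recalled before \Cref{lemma: intro final countdown}) gives $Y^{\pm1}\in 1+z\mathcal O_\mathfrak{p}\llbracket z\rrbracket$ at every unramified $\mathfrak{p}$, so $[z^{\sigma n}]Y^{n\mu}\in\mathcal O_\mathfrak{p}$; and \Cref{lemma: intro final countdown}, applied to $Y^{n\mu}=\exp\bigl((n\mu)\smallint V\bigr)$ with the pair $(n\mu,\sigma n)$, gives $[z^{\sigma n}]Y^{n\mu}\equiv 0\bmod p^{\,\ord_p(\mu)-\ord_p(\sigma)}\mathcal O_\mathfrak{p}$ whenever $\ord_p(\mu)\geq\ord_p(\sigma)$. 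Coprimality forces one of $\ord_p(\mu),\ord_p(\sigma)$ to vanish, which is exactly what absorbs the $1/\mu$ (if $p\mid\mu$, hence $p\nmid\sigma$) or the implicit $1/\sigma$ (if $p\mid\sigma$, hence $p\nmid\mu$), giving integrality. Running the same estimate on the difference $\frob_\mathfrak{p}(\tilde a_{p^{r-1}m})-\tilde a_{p^r m}$ and feeding in that $V^{(-,\nu)}$ itself obeys the $s=2$ congruence (Integrality of Framing Theorem) yields \eqref{eq: s-sequence intro} with $s=2$; the $\Phi^+$ statement, the $\{2\}$-localization, and membership in $\overline{\mathcal S}^2$ then come from the $z\mapsto(-1)^\nu z$ twist.

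For part $(2)$ the strategy is to transplant the proof of \Cref{thm: thm 4.34} to $\widetilde V$. By \cite{mue20} a rational $2$-function $V$ has periodic coefficients in a cyclotomic field; framing preserves rationality, and $\tfrac1\sigma\mathscr{C}_\sigma$ replaces the period $N$ by one dividing $\mathrm{lcm}(N,\sigma)$, so $\widetilde V$ is again a rational $2$-function by part $(1)$. Repeating the computation of \Cref{thm: thm 4.34} on $\widetilde V$, the $s=2$ congruence extracts a factor $p^{2\ord_p(pn)-\delta_{2,p}}$, and the residual $\tfrac{2}{p^2n^2}\bigl(\frob_\mathfrak{p}(\tilde a^+_n)-\tilde a^+_{pn}\bigr)$ is, modulo a further power of $p$, a $\mathbb Z[D^{-1}]$-combination of weighted harmonic sums $\sum_{p\nmid\ell}a_{L-\ell}a_\ell/\ell^2$ in the coefficients $(a_k)$ of $V$ itself --- the Cartier twist only replaces the top index $L$ by a multiple of $\sigma$ --- with coefficients of the shape $\tilde y_{\,\cdot,\cdot,p}$ arising from $\exp(\text{(integer)}\,\smallint V)$. \Cref{lemma: intro general wolstenholme} bounds the harmonic sums by $p^{\max\{0,\ord_p(\cdot)-\varepsilon_{p,N}\}}$ and \Cref{lemma: intro final countdown} the $\tilde y$-factors; since $\varepsilon_{p,N}=\ord_p(N)=0$ and $\gamma_p=0$ for $p\geq5$ with $p\nmid N$, the two contributions add up to the stated exponent $2\ord_p(pn)-\delta_{2,p}+\max\{0,\ord_p(pn)-\gamma_p\}$, which for such $p$ equals $3\ord_p(pn)$; for $p\in\{2,3\}$ the loss is measured by the nonzero $\gamma_p$, which is precisely why $S=\{p:p\mid N\}\cup\{2,3\}$ is the exceptional set. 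Summation over $r$ then produces the $3r$-congruence.

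The main obstacle is the case $p\mid\sigma$. For $p\nmid\sigma$ the Cartier operator is transparent at $p$: $\sigma$ is a $\mathfrak{p}$-adic unit, $\mu/\sigma$ is $\mathfrak{p}$-adically close to an integer $\nu_0$, and $\tfrac1\sigma\mathscr{C}_\sigma V^{(-,\mu/\sigma)}$ is $\mathfrak{p}$-adically indistinguishable from the integer framing $V^{(-,\nu_0)}$, so \Cref{thm: thm 4.34} transfers with no loss. For $p\mid\sigma$, extracting $[z^{\sigma n}]$ rather than $[z^n]$ shifts every $p$-adic valuation by $\ord_p(\sigma)$, and one must verify that the normalization $\tfrac1\sigma$ (equivalently $\tfrac1\mu$) compensates this shift not merely for the leading term but term by term in the convolution $[z^{\sigma n}](V\,Y^{n\mu})$, which forces a careful split of \Cref{lemma: intro final countdown} into the ranges $\ord_p(\sigma n-j)\leq\ord_p(n\mu)$ and $\ord_p(\sigma n-j)>\ord_p(n\mu)$. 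Keeping this bookkeeping sharp, together with pinning down the period of $\widetilde V$ precisely enough that $\varepsilon_{p,\cdot}$ contributes nothing for $p\notin S$, is where the real work lies; the $\Phi^+$ sign causes no trouble since it only affects the prime $2\in S$.
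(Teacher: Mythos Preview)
Your high-level architecture is right --- derive the closed formula $\tilde a_n^+=\frac{1}{\mu}[z^{\sigma n}]Y^{n\mu}$ via Lagrange inversion, then rerun the machinery behind \Cref{thm: thm 4.34} (expand $\exp$, integrate by parts, feed in \Cref{lemma: intro final countdown} and \Cref{lemma: intro general wolstenholme}). That is exactly what the paper does in \Cref{thm: 2-integrality of fractional framing} and \Cref{thm: fractional analogue of framed 3-functions}: it literally repeats, step by step, the proofs of \Cref{integrality of framing}, \Cref{3-function characterization}, \Cref{cor:3-function characterization} and the final assembly, tracking the extra $\sigma,\rho$ through every $p$-adic estimate.

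There is, however, a genuine error in your plan for part~$(2)$. You write that ``framing preserves rationality'' and conclude that $\widetilde V$ ``is again a rational $2$-function by part~$(1)$'', intending then to invoke \Cref{thm: thm 4.34} on $\widetilde V$. This is false: already for $V(z)=z/(1-z)$ and integer $\nu$ one has $a_n^+=\frac{1}{\nu+1}\binom{(\nu+1)n}{n}$, which grows exponentially and is certainly not periodic, so $V^{(+,\nu)}$ is not rational. The Cartier operator does not repair this. Consequently you cannot apply \Cref{thm: thm 4.34} to $\widetilde V$, and in particular you cannot invoke \Cref{lemma: intro general wolstenholme} on the coefficients of $\widetilde V$. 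Your own next sentence silently corrects course by saying the harmonic sums are ``in the coefficients $(a_k)$ of $V$ itself'', but that is a different (and correct) argument: it means redoing the whole proof of \Cref{thm: thm 4.34} from scratch with the parameter $\nu=\mu/\sigma$ inserted into the formula $a_{\sigma n}^+=\frac{\sigma}{\mu}\bigl[z^{-\sigma n}\exp(\mu n\smallint V)\bigr]_0$, not applying that theorem as a black box to a new rational function.

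A second, smaller issue: your rescaling identity $\frac{1}{\sigma}\mathscr{C}_\sigma V^{(-,\mu/\sigma)}=\mathscr{C}_\sigma\bigl((\tfrac{1}{\sigma}V)^{(-,\mu)}\bigr)$ is formally correct for $\Phi^+$, but it does not buy you a reduction to the integer case, because $\tfrac{1}{\sigma}V$ fails to lie in $\mathcal{S}^2(K|\mathbb{Q})$ (or even $\mathcal{S}^1$) at primes dividing $\sigma$, so neither the Integrality of Framing Theorem nor \Cref{thm: thm 4.34} applies to it. The paper does not attempt this reduction; it keeps $V$ fixed and carries the fractional $\nu$ through the estimates directly, which is what your last paragraph correctly identifies as ``where the real work lies''.
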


The proofs of these statements go analogously to the non-fractional case.
The Jacobsthal-Kazandzidis congruence then is a special case of \Cref{thm: intro 2-integrality of fractional framing} $(2)$ by taking $a_n= 1$ for all $n\in \mathbb{N}$, that is, taking $V(z) = \frac{z}{1-z}$, and appropriately varying $\sigma$ and $\nu$.

\subsection*{Acknowledgment}
The author is grateful to Johannes Walcher for providing the initial motivation for this work.

\subsection*{Notation}
Throughout this paper, the natural numbers will be meant to be the set of all positive integers, $\mathbb{N}= \{ 1, 2, . . .\}$, while $\mathbb{N}_0 = \mathbb{N}\cup \{0\}$.
If $X$ is a set, then $X^{\mathbb{N}}$ denotes the set of all sequences indexed by the natural numbers, $(x_n)_{n \in \mathbb{N}}\in X^{\mathbb{N}}$.
For a ring $R$ let $R\llbracket z \rrbracket$ denote the ring of formal power series in the variable $z$ with coefficients in $R$.

\section{Preliminaries}\label{Preliminaries}

In this section, we introduce the definitions and notational conventions that will
be used throughout the paper.
We mainly follow the conventions given in \cite{mue20}.
\\

Let $K$ be a fixed algebraic number field and we assume $K$ to be normal over $\mathbb{Q}$.
Denote by $\mathcal{O}$ the ring of integers of $K$.
Let $D$ be the discriminant of $K | \mathbb{Q}$.
We say that a prime $p\in \mathbb{Z}$ is unramified in $K| \mathbb{Q}$ if all prime ideals $\mathfrak{p} \mid p\mathcal{O}$  are unramified.
Note that an unramified prime $p$ is characterized by the property that $p\nmid D$.
For any prime ideal $\mathfrak{p}$, $\mathcal{O}_\mathfrak{p}$ denotes the ring of $\mathfrak{p}$-adic integers.
Then $\mathcal{O}_{\mathfrak{p}}$ is an integral domain and its field of fractions $K_{\mathfrak{p}} = \mathrm{Quot}(\mathcal{O}_{\mathfrak{p}})$ is the $\mathfrak{p}$-adic completion of $K$.

For $\mathfrak{p}\mid (p)$, the \textit{Frobenius element} $\mathrm{Fr}_{\mathfrak{p}}$ at $\mathfrak{p}$ is the unique element satisfying the following two conditions:
$\mathrm{Fr}_{\mathfrak{p}}$ is an element in the decomposition group $D(\mathfrak{p}) \subset \mathrm{Gal}(K/\mathbb{Q})$ of $\mathfrak{p}$ and for all $x \in \mathcal{O}$, $\mathrm{Fr}_{\mathfrak{p}}(x)\equiv x^p \mod \mathfrak{p}$.
By Hensel's Lemma, $\mathrm{Fr}_{\mathfrak{p}}$ can be lifted to $\mathcal{O}_{\mathfrak{p}}$ and then extended to an automorphism $\mathrm{Frob}_\mathfrak{p}\colon K_\mathfrak{p}\rightarrow K_{\mathfrak{p}}$.
By declaring $\mathrm{Frob}_\mathfrak{p}(z)=z$, $\mathrm{Frob}_\mathfrak{p}$ can be (linearly) extended to an endomorphism $\mathrm{Frob}_\mathfrak{p}\colon K_\mathfrak{p}\llbracket z\rrbracket \rightarrow K_\mathfrak{p} \llbracket z\rrbracket$.

In the following, let $R$ be a $\mathbb{Q}$-algebra.
The \textit{Euler operator} $\delta_R \colon R (\hspace{-0.2em}( z )\hspace{-0.2em}) \rightarrow R ( \hspace{-0.2em} ( z ) \hspace{-0.2em} )$ is given by $z\frac{\mathrm{d}}{\mathrm{d}z}$, i.e.
\begin{align*}
	\delta_R \left[ \sum_{n=-\infty}^\infty r_n z^n\right]
	= \sum_{n=-\infty}^\infty n r_n z^n.
\end{align*}
Its (partial) inverse of $\delta_R$ is the \textit{logarithmic integration} $\smallint_R \colon z R \llbracket z \rrbracket \oplus z^{-1} R\left\llbracket z^{-1}\right\rrbracket \rightarrow z R \llbracket z \rrbracket \oplus z^{-1}R\left\llbracket z^{-1}\right\rrbracket$ given by
\begin{align*}
	\smallint\hspace{-0.35em}\,_{R}\left[ \sum_{n=-\infty}^\infty r_n z^n \right]
	= \sum_{n=-\infty}^\infty \frac{r_n} n z^n\qquad\text{and}\qquad \smallint\hspace{-0.35em}\,_R(0)=0.
\end{align*}
For a number $k\in \mathbb{N}$ let $\mathscr{C}_{R,k}$ be the operator $\mathscr{C}_{R,k}\colon R ( \hspace{-0.2em} ( z ) \hspace{-0.2em} ) \rightarrow R ( \hspace{-0.2em} ( z ) \hspace{-0.2em} )$, called the \textit{Cartier operator}, given by
\begin{align*}
	\mathscr{C}_{R,k}\left[\sum_{n=-\infty}^\infty r_nz^n\right]
	=\sum_{n=-\infty}^\infty r_{kn}z^n.
\end{align*}
For a number $\ell \in \mathbb{N}$, let $\varepsilon_{R,\ell} \colon R ( \hspace{-0.2em} ( z ) \hspace{-0.2em} ) \rightarrow R ( \hspace{-0.2em} (z) \hspace{-0.2em} )$ be the $R$-algebra homomorphism uniquely determined by setting
\begin{align*}\varepsilon_{R,\ell} (z) = z^\ell.
\end{align*}
Hereafter, we will omit $R$ from the notation of $\delta_R$, $\smallint_R$, $\mathscr{C}_{R,k}$ and $\varepsilon_{R,\ell}$.

In \cite{walcher16}, an \textit{$s$-function with coefficients in $K$} (for $s \in \mathbb{N}$) is defined to be a formal power series $\widetilde V \in z K\llbracket z \rrbracket$ such that for every unramified prime $p\in \mathbb{Z}$ in $K | \mathbb{Q}$ and prime ideal $\mathfrak{p}\mid (p)$ we have
\begin{align}\label{eq: s-function}
\frac1{p^s}\mathrm{Frob}_\mathfrak{p} \widetilde V \left( z^p \right) - \widetilde V(z) \in z \mathcal{O}_\mathfrak{p} \llbracket z \rrbracket.
\end{align}
In \cite{mue20}, the author introduced \textit{$s$-sequences} as we will repeat immediately.
Such a sequence is basically the coefficients of an $s$-function, listed in a sequence.
This notion generalizes the concept of \textit{$s$-realizable sequences} (a term introduced by Almkvist and Zudilin in \cite{alm06} in 2006) to sequences with algebraic integral coefficients, satisfying certain supercongruences.

A sequence $(a_n)_{n\in \mathbb{N}}\in K^{\mathbb{N}}$ is said to satisfy the \textit{local $s$-function property for $p$}, if $p \in \mathbb{Z}$ is unramified in $K|\mathbb{Q}$, and $a_n \in \mathcal{O}_\mathfrak{p}$ is a $\mathfrak{p}$-adic integer for all $n \in \mathbb{N}$, and
\begin{align}\label{eq: s-sequences}
	\mathrm{Frob}_\mathfrak{p} \left( a_{mp^{r-1}} \right)
	\equiv a_{mp^r} \mod p^{sr}\mathcal{O}_\mathfrak{p},
\end{align}
for all $m,r\in \mathbb{N}$. $(a_n)_{n \in \mathbb{N}}$ is called an \textit{$s$-sequence} if it satisfies the local $s$-function property for all unramified primes $p$ in $K|\mathbb{Q}$.
We denote by $\mathcal{S}^s(K|\mathbb{Q})\subset z \mathcal{O}\left[D^{-1}\right]\llbracket z\rrbracket$ the set of all generating functions of $s$-sequences with coefficients in $K$.
Furthermore, $\overline{\mathcal{S}}^s (K|\mathbb{Q})\subset z K\llbracket z \rrbracket$ denote the set of formal power series, such that $V \in \overline{\mathcal{S}}^s (K|\mathbb{Q})$ if and only if there is a constant $C\in \mathbb{N}$ such that $CV \in \mathcal{S}^s(K|\mathbb{Q})$.
Let $S$ be a finite set consisting of prime numbers, then $\mathcal{S}^s ( K | \mathbb{Q} )_S$ denotes the set of power series such that $V \in \mathcal{S}^s ( K | \mathbb{Q} )_S$ if and only if the coefficients of $V$ satisfy the local $s$-function condition \cref{eq: s-sequences} for all unramified primes $p\not \in S$. 
Also,
\begin{align*}
\mathcal{S}^s(K|\mathbb{Q})_{\mathrm{fin}} = \bigcup_{S} \mathcal{S}^s(K|\mathbb{Q})_S,
\end{align*}
where $S$ runs through all finite subsets of rational primes.
Analogously, the sets $\overline{\mathcal{S}}^s(K|\mathbb{Q})_S$ and $\overline{\mathcal{S}}^s (K|\mathbb{Q})_{\mathrm{fin}}$ are defined.

Let $n \in \mathbb{Z}$ be an integer.
Let $[-]_n$ denote the $R$-functional $[-]_n\colon R ( \hspace{-0.2em} ( z ) \hspace{-0.2em} ) \rightarrow R$, uniquely determined by
\begin{align*}
	\left[z^k\right]_n=\delta_{n,k},
\end{align*}
where $\delta_{n,k}$ denotes the \textit{Kronecker symbol}.
Let $\mathcal{G}$ be the operator $\mathcal{G}\colon \mathbb{C}^\mathbb{N} \rightarrow z\mathbb{C}\llbracket z \rrbracket$, sending the sequence $x \in \mathbb{C}^\mathbb{N}$ to its generating formal power series,
\begin{align*}
\mathcal{G}(x) = \sum_{n=1}^\infty x_n z^n.
\end{align*}
Trivially, the inverse of $\mathcal{G}$ is given by $\left([ - ]_n\right)_{n\in \mathbb{N}}\colon z R\llbracket z \rrbracket \rightarrow R^{\mathbb{N}}$, i.e. $\mathcal{G} \circ \left( [ - ]_n \right)_n = \mathrm{id}_{z R \llbracket z \rrbracket}$ and $\left( [ - ]_k \right)_k \circ \mathcal{G} = \mathrm{id}_{R^{\mathbb{N}}}$.

In other	words, $[-]_n$ extracts the $n$-th coefficient of a Laurent series.
Obviously, for any $V \in R ( \hspace{-0.2em} ( z ) \hspace{-0.2em} )$ and $n \in \mathbb{Z}$, we have
\begin{align}\label{coefofderiv}
	\left[ \delta V(z)\right]_n= n\cdot \left[ V(z) \right]_{n}.
\end{align}
In particular, for $n=0$ we obtain a formula for \textit{integrating by parts}:
Let $F,G\in R ( \hspace{-0.2em} ( z ) \hspace{-0.2em} )$, then
\begin{align*}
	0=\left[\delta(F(z)\cdot G(z))\right]_0
	=\left[G(z)\cdot\delta F(z)+F(z)\cdot \delta
	G(z)\right]_0
\end{align*}
and therefore
\begin{align}\label{intbyparts}
	\left[G(z)\cdot\delta F(z)\right]_0=-\left[ F(z) \cdot \delta G(z)\right]_0.
\end{align}
Analogously, if $[F(z)]_0=0$ and $n \neq 0$, then
\begin{align}\label{coefofintegratedfunction}
\left[\smallint(F(z))\right]_n=\frac1n\left[ F(z) \right]_n
\quad \text{and}\quad
\left[ \smallint F(z) \right]_0 = 0.
\end{align}

\section{Partial Bell Polynomials and Bell Transformations}

Let $\mathbb{Q}[\mathfrak{X}]$ be the ring of polynomials in a countable number of indeterminates $\mathfrak{X}=\{X_1,X_2,...\}$ over $\mathbb{Q}$.
The complete exponential Bell polynomials $\{B_n| n\in \mathbb{N}\}$ (named in honor of the mathematician and science fiction writer Eric Temple Bell) are defined by the generating coefficients of $\exp \left( \sum_{n=1}^\infty \frac{X_i}{i!} z^i \right)$,
\begin{align*}
\exp\left( \sum_{n=1}^\infty \frac{X_i}{i!} z^i\right) =: \sum_{n=1}^\infty B_n(\mathfrak{X}) \frac{z^n}{n!}.
\end{align*}
For $k , n \in \mathbb{N}$, $k \leq n$, the $(n,k)$-th \textit{partial Bell polynomial} $B_{n,k}$ is implicitly defined as the homogeneous part of degree $k$ of the $n$-th complete exponential Bell polynomial $B_n \in \mathbb{Q}[\mathfrak{X}]$.
It is also given by the series expansion
\begin{align*}
	\frac1{k!}\left(\sum_{j=1}^\infty X_j\frac{z^j}{j!}\right)^k
	=\sum_{n=k}^\infty B_{n,k}(\mathfrak{X}) \frac{z^n}{n!}.
\end{align*}
The polynomial $B_{n,k}$ is then explicitly given by
\begin{align*}
	B_{n,k}(\mathfrak{X})=n!\sum_{\alpha\in\pi(n,k)}\left(
	\prod_{i=1}^{n-k+1}
	\frac1{\alpha_i!}\left(\frac{X_i}{i!}\right)^{\alpha_i}\right),
\end{align*}
where $\pi(n,k)$ denotes the set of multi-indices $\alpha\in \mathbb{N}_0^{n-k+1}$ such that
\begin{align*}
	\sum_{i=1}^{n-k+1}\alpha_i=k\quad\text{and}\quad
	\sum_{i=1}^{n-k+1}i\alpha_i=n.
\end{align*}
Note that $B_{n,k}(\mathfrak{X})$ is in fact a polynomial in the variables $X_1 , ... , X_{n-k+1} $ for all $n,k \in \mathbb{N}$, $k \leq n$.
It follows immediately from the definition that the $(n,k)$-th partial Bell polynomial is homogeneous of degree $k$ and of weight $n$, i.e. for a scalar $\lambda\in \mathbb{C}$,
\begin{align*}
B_{n,k}\left(\left(\lambda^i X_i\right)_{i\in \mathbb{N}}\right)
= \lambda^n B_{n,k} ( \mathfrak{X} ).
\end{align*}

For a multi-index $\alpha \in \mathbb{C}^r$ ($r\in \mathbb{N}$), the \textit{absolute value of $\alpha$}  is defined by the sum of components of $\alpha$, i.e. $|\alpha| = \sum_{i=1}^r \alpha_i$.
For a sequence $x = (x_n)_{n\in \mathbb{N}} \in \mathbb{C}^{\mathbb{N}}$ we will use the convention
\begin{align*}
	!x = (n! x_n)_{n\in \mathbb{N}}.
\end{align*}

In \cite{bir18}, \textit{Bell transformations of sequences} were introduced to tackle a wide variety of problems in enumerative combinatorics.
These transformations come along with functional equations satisfied by the corresponding generating power series.
Let $a,b,c,d \in \mathbb{C}$ be fixed.
Then the Bell transformation associated to $(a,b,c,d)$ is a map $\mathscr{Y}_{a,b,c,d}\colon \mathbb{C}^{\mathbb{N}} \rightarrow \mathbb{C}^{\mathbb{N}}$, mapping a sequence $x=(x_n)_{n \in \mathbb{N}} \in \mathbb{C}^{\mathbb{N}}$ to its image $y=(y_n)_{n \in \mathbb{N}}=\mathscr{Y}_{a,b,c,d}(x)$ given by
\begin{align*}
	y_n=\frac1{n!}\sum_{k=1}^n\left[
	\prod_{j=1}^{k-1}(an+bk+cj+d)\right]
	B_{n,k}(!x) \quad\text{for all $n\geq 1$}.
\end{align*}

The main result in \cite{bir18} is the following convolution formula (Thm. 1 in \cite{bir18}):
Let $x,y \in \mathbb{C} ^{\mathbb{N}}$ such that $y = \mathscr{Y}_{a,b,c,d}(x)$.
Assume $c \neq 0$.
Then, for every $n \in \mathbb{N}$ and for any $\lambda \in \mathbb{C}$, we have
\begin{align}\label{eq: birmayer et al convolution}
	\sum_{k=1}^n\left[\prod_{j=1}^{k-1}(\lambda -dj+d)\right]
	B_{n,k}(!y)=\sum_{k=1}^n\left[
	\prod_{j=1}^{k-1}(an+bk+cj+d+\lambda)
	\right]B_{n,k}(!x).
\end{align}
The following statement is a direct consequence of \cref{eq: birmayer et al convolution}.
We give a proof for the sake of completeness.

\begin{theorem}[Consequence of \cref{eq: birmayer et al convolution}]\label{Composition of Bell Transformations}
Let $a,b,c,d,e,f \in \mathbb{C}$ such that either $c \neq 0$ or $b= c = 0$.
Then
\begin{align*}
	\mathscr{Y}_{e, 0, -d, f} \circ \mathscr{Y}_{a, b, c, d} =
	\mathscr{Y}_{a+e, b, c, f}.
\end{align*}
\end{theorem}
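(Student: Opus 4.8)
The plan is to verify the asserted operator identity coefficientwise, feeding the convolution formula \eqref{eq: birmayer et al convolution} a value of the free parameter $\lambda$ tailored to the outer transformation. Fix a sequence $x \in \mathbb{C}^{\mathbb{N}}$ and put $y = \mathscr{Y}_{a,b,c,d}(x)$ and $w = \mathscr{Y}_{e,0,-d,f}(y)$; the goal is to show $w_n = \mathscr{Y}_{a+e,b,c,f}(x)_n$ for every $n \in \mathbb{N}$. Unwinding the definition of the Bell transformation for the parameter tuple $(e,0,-d,f)$, one has
\begin{align*}
w_n = \frac1{n!}\sum_{k=1}^n \left[\prod_{j=1}^{k-1}(en - dj + f)\right] B_{n,k}(!y),
\end{align*}
since the second parameter $0$ removes the $k$-dependence and the third parameter $-d$ supplies the factor $-dj$.

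First I would dispose of the case $c \neq 0$. Then \eqref{eq: birmayer et al convolution} applies to the sequence $y$, and since it holds for every $\lambda \in \mathbb{C}$ with $n$ fixed, I would specialize to $\lambda = en + f - d$. With this choice $\lambda - dj + d = en - dj + f$, so the left-hand side of \eqref{eq: birmayer et al convolution} becomes exactly $n!\,w_n$, while on the right-hand side the factor is $an + bk + cj + d + \lambda = (a+e)n + bk + cj + f$, which is precisely the weight in the definition of $\mathscr{Y}_{a+e,b,c,f}$. Comparing the two sides then gives $n!\,w_n = \sum_{k=1}^n \left[\prod_{j=1}^{k-1}\big((a+e)n + bk + cj + f\big)\right] B_{n,k}(!x) = n!\,\mathscr{Y}_{a+e,b,c,f}(x)_n$, which is what we want.

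It remains to cover the case $b = c = 0$, to which \eqref{eq: birmayer et al convolution} does not directly apply. Here I would argue by a polynomial-identity (density) argument. For a fixed sequence $x$ and a fixed index $n$, the quantity $\big(\mathscr{Y}_{e,0,-d,f}\circ\mathscr{Y}_{a,b,c,d}\big)(x)_n$ is a polynomial function of the six parameters $(a,b,c,d,e,f) \in \mathbb{C}^6$: each $y_k$ is polynomial in $(a,b,c,d)$, each $B_{n,k}(!y)$ is polynomial in $y_1,\dots,y_{n-k+1}$ and hence again in $(a,b,c,d)$, and the prefactors $\prod_{j}(en - dj + f)$ are polynomial in $(e,d,f)$; likewise $\mathscr{Y}_{a+e,b,c,f}(x)_n$ is polynomial in these parameters. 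By the previous paragraph the two polynomials agree on the dense set $\{c \neq 0\}\subset \mathbb{C}^6$, hence they are identical, in particular at every point with $b = c = 0$. This finishes the argument.

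I do not anticipate a serious obstacle: the content is the bookkeeping of matching up the two products of linear factors, and the only point that deserves a moment's attention is that $\lambda$ in \eqref{eq: birmayer et al convolution} is allowed to depend on $n$, which is exactly what makes the substitution $\lambda = en + f - d$ legitimate.
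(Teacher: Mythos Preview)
Your argument is correct. The case $c\neq 0$ is handled exactly as in the paper, via the same specialization $\lambda = en + f - d$ in \eqref{eq: birmayer et al convolution}.

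Your treatment of the degenerate case $b=c=0$, however, differs from the paper's. The paper invokes the explicit inverse $\mathscr{Y}_{a,0,0,d}^{-1} = \mathscr{Y}_{-a,0,-d,0}$ from \cite[Cor.~2\,$(ii)$]{bir18} and then feeds this back into the already-established case $c\neq 0$ (applied to the composition $\mathscr{Y}_{a+e,0,0,f}\circ\mathscr{Y}_{-a,0,-d,0}$, whose inner third parameter is $-d$). Your polynomial-identity argument is more self-contained: it requires no external input beyond \eqref{eq: birmayer et al convolution} and the observation that, for fixed $x$ and $n$, both sides are polynomial in $(a,b,c,d,e,f)$. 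It also handles the sub-case $b=c=d=0$ uniformly, whereas the paper's reduction, as written, needs $-d\neq 0$ for the inner transformation to fall under the first case and would otherwise require an extra remark. The trade-off is that the paper's route exhibits an algebraic reason (invertibility of $\mathscr{Y}_{a,0,0,d}$) for the identity, while yours is a pure specialization/continuity argument.
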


\begin{proof}
Let $c\neq 0$ and let $x, y, \hat{y}\in \mathbb{C}^{\mathbb{N}}$ be sequences related by
\begin{align*}
y = \mathscr{Y}_{a,b,c,d}(x)
\quad\text{and}\quad
\hat{y} = \mathscr{Y}_{e,0,-d,f}(y).
\end{align*}
In particular, we have
\begin{align*}
	n!\hat y_n =\sum_{k=1}^n\left[ \prod_{j=1}^{k-1} (en-dj +f) \right] B_{n,k}(!y).
\end{align*}
By \cref{eq: birmayer et al convolution}, we therefore find for $\lambda = en -d +f$
\begin{align*}
\hat{y}_n =
\frac1{n!}\sum_{k=1}^n \left[\prod_{j=1}^{k-1}((a+e)n + bk + cj +f)\right]
B_{n,k}(!x).
\end{align*}
This is the desired formula $\mathscr{Y}_{e, 0, -d, f} \circ \mathscr{Y}_{a, b, c, d} = \mathscr{Y}_{a+e, b, c, f}$.\\
If $b=c=0$, then $\mathscr{Y}_{a, 0, 0, d}^{-1}=\mathscr{Y}_{-a, 0, -d, 0}$ by using \cite[Cor. 2 $(ii)$]{bir18}.
Hence, by the previous case, we may compute
\begin{align*}
\mathscr{Y}_{a+e, 0, 0, f}\circ \mathscr{Y}^{-1}_{a,0, 0, d} =
\mathscr{Y}_{a+e, 0, 0, f} \circ \mathscr{Y}_{-a, 0, -d, 0} =
\mathscr{Y}_{e, 0, -d, f}.
\end{align*}
Equivalently, $\mathscr{Y}_{e,0,-d,f}\circ \mathscr{Y}_{a,0,0,d}=\mathscr{Y}_{a+e,0,0,f}$.
\end{proof}

To us, Bell transformations come in handy to define the framing operators $\Phi^{+/-}$ and use the corresponding functional equations.

\section{The Dwork's Integrality Lemma}

We recall Dwork's Integrality Lemma in the setting of $1$-functions from \cite{walcher16}.
Furthermore, we will prove \Cref{lemma: final countdown} as an extension to \Cref{characterization of 1-functions}, which also contributes to the proof of \Cref{thm: thm 4.34}.

\begin{theorem}[cf. Prop. 7 in \cite{walcher16}, Dwork's Integrality Lemma]\label{characterization of 1-functions}
Let $V \in zK\llbracket z \rrbracket$ and $Y \in 1 + z K\llbracket z \rrbracket$ be related by $V = \log Y$, $Y = \exp (V)$.
Then the following is equivalent
\begin{enumerate}[$(i)$]
\item
$V$ is a $1$-function.
\item
 There is a sequence $q\in \mathcal{O}\left[D^{-1}\right]^\mathbb{N}$ such that
 \begin{align*}
 \smallint V(z) = - \sum_{n=1}^\infty \log (1- q_nz^n)
 \end{align*}
\item
For every unramified prime $p$ in $K|\mathbb{Q}$,
\begin{align*}
\frac{\frob_p(Y)(z^p)}{Y(z)^p} \in 1 + z p \mathcal{O}_\mathfrak{p}\llbracket z \rrbracket,
\end{align*}
\item
$Y \in 1+ z \mathcal{O}\left[ D^{-1} \right] \llbracket z \rrbracket$.
\end{enumerate}
\end{theorem}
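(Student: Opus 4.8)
The plan is to prove the three equivalences $(i)\Leftrightarrow(iii)$, $(iii)\Leftrightarrow(iv)$ and $(ii)\Leftrightarrow(iv)$. Two reductions come first. All four conditions are local, so I fix a single unramified prime $p$ and a single $\mathfrak{p}\mid(p)$: on the one hand $p\nmid D$ gives $\mathcal{O}[D^{-1}]\subseteq\mathcal{O}_\mathfrak{p}$, and on the other the standard description of a localization of a Dedekind ring as an intersection of valuation rings shows that a power series in $K\llbracket z\rrbracket$ whose coefficients lie in $\mathcal{O}_\mathfrak{p}$ for every $\mathfrak{p}$ not dividing $D$ already has coefficients in $\mathcal{O}[D^{-1}]$. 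Secondly I abbreviate $W:=\smallint V$ (so that $Y=\exp W$, $W=\log Y$, as in the companion statement recalled in the introduction) and note the coefficient identity $n\,[W]_n=[V]_n$; reading off coefficients, this turns condition $(i)$ — that $([V]_n)_n$ is a $1$-sequence — into the single assertion that $\tfrac1p\,\frob_\mathfrak{p}W(z^p)-W(z)\in z\mathcal{O}_\mathfrak{p}\llbracket z\rrbracket$ for every unramified $p$, while condition $(ii)$ becomes the assertion that $W=-\sum_{n\ge1}\log(1-q_nz^n)$ for some sequence $q\in\mathcal{O}[D^{-1}]^{\mathbb{N}}$.

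For $(i)\Leftrightarrow(iii)$ I would apply $\log$ to the ratio in $(iii)$. Since $\frob_\mathfrak{p}$ is a continuous ring homomorphism fixing $z$, it commutes with $\exp$, $\log$ and with $z\mapsto z^p$, so $\log\!\bigl(\frob_\mathfrak{p}(Y)(z^p)/Y(z)^p\bigr)=\frob_\mathfrak{p}(W)(z^p)-pW(z)=p\bigl(\tfrac1p\frob_\mathfrak{p}(W)(z^p)-W(z)\bigr)$. The key fact is that $\exp$ and $\log$ restrict to mutually inverse bijections between $pz\mathcal{O}_\mathfrak{p}\llbracket z\rrbracket$ and $1+pz\mathcal{O}_\mathfrak{p}\llbracket z\rrbracket$, which reduces to the $p$-adic estimates $\ord_p(p^k/k!)\ge1$ and $\ord_p(p^k/k)\ge1$ for $k\ge1$; the former equals $s_2(k)$, the binary digit sum, when $p=2$, and this tight case is the one point to watch. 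Granting the bijection, $(iii)$ says exactly that $\tfrac1p\frob_\mathfrak{p}(W)(z^p)-W(z)\in z\mathcal{O}_\mathfrak{p}\llbracket z\rrbracket$ for all unramified $p$, i.e.\ $(i)$.

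For $(iii)\Leftrightarrow(iv)$, which is Dwork's lemma, the implication $(iv)\Rightarrow(iii)$ is soft: for unramified $p$ the ring $\mathcal{O}_\mathfrak{p}/p\mathcal{O}_\mathfrak{p}$ is the residue field $\mathbb{F}_{p^f}$, whose Frobenius $x\mapsto x^p$ agrees with $\frob_\mathfrak{p}$ modulo $\mathfrak{p}$; hence $Y(z)^p\equiv\frob_\mathfrak{p}(Y)(z^p)\bmod p\mathcal{O}_\mathfrak{p}\llbracket z\rrbracket$, and dividing by the unit $Y(z)^p$ gives $(iii)$. The converse $(iii)\Rightarrow(iv)$ is the core and I would argue it by induction on the coefficients of $Y=1+\sum c_nz^n$: clear the denominator in $(iii)$, writing $\frob_\mathfrak{p}(Y)(z^p)=Y(z)^pu(z)$ with $u\in1+pz\mathcal{O}_\mathfrak{p}\llbracket z\rrbracket$; assuming $c_1,\dots,c_{n-1}\in\mathcal{O}_\mathfrak{p}$, extract the coefficient of $z^n$ from both sides. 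On the left it is $\frob_\mathfrak{p}(c_{n/p})$ if $p\mid n$ and $0$ otherwise. On the right, expanding $(1+g)^p$ with $g=\sum c_kz^k$, the unknown $c_n$ enters only through the summand $pg$ (contributing $pc_n$), the binomials $\binom pj$ for $1\le j\le p-1$ are divisible by $p$, and $[g^p]_n=[g_{<n}^p]_n\equiv[\,\frob_\mathfrak{p}(g_{<n})(z^p)\,]_n\bmod p\mathcal{O}_\mathfrak{p}$ by the same residue-field computation, where $g_{<n}=\sum_{k<n}c_kz^k$ is $\mathfrak{p}$-integral by the induction hypothesis; the factor $u$ contributes only elements of $p\mathcal{O}_\mathfrak{p}$. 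The terms $\frob_\mathfrak{p}(c_{n/p})$ then cancel, leaving $pc_n\in p\mathcal{O}_\mathfrak{p}$, hence $c_n\in\mathcal{O}_\mathfrak{p}$; ranging over all unramified $p$ and all $\mathfrak{p}\mid(p)$ and using the intersection formula of the first paragraph gives $(iv)$. I expect this inductive step to be the main obstacle, the delicate bookkeeping being to confirm that $c_n$ enters $[Y^pu]_n$ only linearly as $pc_n$ and to invoke unramifiedness precisely where $p$-th powering in $\mathcal{O}_\mathfrak{p}\llbracket z\rrbracket/p$ is identified with Frobenius.

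Finally, $(ii)\Leftrightarrow(iv)$. Through $W=\log Y$, condition $(ii)$ is equivalent to a product expansion $Y=\prod_{n\ge1}(1-q_nz^n)^{-1}$ with $q_n\in\mathcal{O}[D^{-1}]$. One direction is immediate, since each factor equals $\sum_{k\ge0}q_n^kz^{nk}\in1+z\mathcal{O}[D^{-1}]\llbracket z\rrbracket$ and the product converges $z$-adically. For the converse I would build the $q_n$ recursively: if $q_1,\dots,q_{n-1}\in\mathcal{O}[D^{-1}]$ already give $Y\prod_{k<n}(1-q_kz^k)\equiv1\bmod z^n$, then the coefficient of $z^n$ in $Y\prod_{k<n}(1-q_kz^k)$ lies in $\mathcal{O}[D^{-1}]$; taking it as $q_n$ improves the congruence to $z^{n+1}$, and passing to the limit yields the product expansion, hence $(ii)$. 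These three equivalences together prove the statement.
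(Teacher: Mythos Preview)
Your argument is correct and follows the classical route; the paper itself does not prove this statement but simply refers to Dwork's original article and to Schwarz--Vologodsky--Walcher, so you are filling in what the paper only cites.

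One point deserves to be flagged explicitly rather than absorbed into notation. You set $W:=\smallint V$ and take $Y=\exp W$; this is the normalization used in the proposition immediately following (where $Y=\exp(\smallint V)$ with $V\in\mathcal{S}^1$) and is the one under which all four conditions are actually equivalent, but it silently repairs the printed hypothesis $Y=\exp(V)$. As literally stated the four conditions are not mutually consistent: for $V=-\log(1-z)$ one has $Y=\exp(V)=(1-z)^{-1}$, so $(iv)$ holds, yet $\smallint V=\mathrm{Li}_2(z)$ admits no expansion $-\sum\log(1-q_nz^n)$ with integral $q_n$ (already $q_2=-1/4$), so $(ii)$ fails. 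Your reformulation of $(i)$ as $\tfrac1p\,\frob_\mathfrak{p}W(z^p)-W(z)\in z\mathcal{O}_\mathfrak{p}\llbracket z\rrbracket$ likewise depends on this correction, since it is the identity $n[W]_n=[V]_n$ that converts the $1$-sequence congruence $\frob_\mathfrak{p}(a_{mp^{r-1}})\equiv a_{mp^r}\bmod p^r$ into that shape. With this understood, your three blocks --- the $\exp$/$\log$ bijection between $pz\mathcal{O}_\mathfrak{p}\llbracket z\rrbracket$ and $1+pz\mathcal{O}_\mathfrak{p}\llbracket z\rrbracket$ for $(i)\Leftrightarrow(iii)$, the coefficient-by-coefficient induction for $(iii)\Rightarrow(iv)$ (where your bookkeeping that $c_n$ enters $[Y^pu]_n$ only as $pc_n$, and that $[g^p]_n\equiv[\frob_\mathfrak{p}(g_{<n})(z^p)]_n\bmod p$ by unramifiedness, is accurate), and the recursive product expansion for $(iv)\Rightarrow(ii)$ --- are all sound and standard.
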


\begin{proof}
See  \cite{dwo60} for the classical statement.
In \cite{walcher16} one finds a proof in the setting of $1$-functions.
\end{proof}

Additionally to Dwork's Lemma, we need the following auxiliary in the proof of \Cref{thm: thm 4.34}.

\begin{proposition}\label{lemma: final countdown}
Let $V \in \mathcal{S}^1(K|\mathbb{Q})$ and $Y\in 1+ zK \llbracket z \rrbracket$ be related by $V=\delta \log Y$, $Y=\exp( \smallint V)$ and let $p$ be unramified in $K|\mathbb{Q}$ and let $\mathfrak{p}\subset \mathcal{O}$ be a prime ideal dividing $(p)$.
Then
\begin{align}\label{eq: 3-framing integrality}
\left[Y(z)^n\right]_m \equiv 0 \mod p^{\max\{ 0, \ord_p(n)-\ord_p(m) \}} \mathcal{O}_\mathfrak{p}.
\end{align}
\end{proposition}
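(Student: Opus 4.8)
The plan is to reduce the statement to an elementary $p$-adic estimate for products of binomial coefficients, exploiting the infinite-product form of $Y$ furnished by Dwork's Lemma. Since $V\in\mathcal{S}^1(K|\mathbb{Q})$ is a $1$-function, the equivalence $(i)\Leftrightarrow(ii)$ of \Cref{characterization of 1-functions} provides a sequence $q=(q_k)_{k\in\mathbb{N}}\in\mathcal{O}\left[D^{-1}\right]^{\mathbb{N}}$ with $\smallint V(z)=-\sum_{k\geq1}\log(1-q_kz^k)$, whence, using the hypothesis $Y=\exp(\smallint V)$,
\begin{align*}
Y(z)=\prod_{k=1}^{\infty}\bigl(1-q_kz^k\bigr)^{-1}.
\end{align*}
Raising this to the $n$-th power and expanding each factor by the binomial series $\bigl(1-q_kz^k\bigr)^{-n}=\sum_{j\geq0}\binom{n+j-1}{j}q_k^{\,j}z^{kj}$ gives the finite expansion
\begin{align*}
\bigl[Y(z)^n\bigr]_m=\sum_{\substack{j_1,j_2,\dots\geq0\\ \sum_k kj_k=m}}\ \prod_{k\geq1}\binom{n+j_k-1}{j_k}\,q_k^{\,j_k},
\end{align*}
in which only indices $k\leq m$ occur. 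As $p$ is unramified, $\mathcal{O}\left[D^{-1}\right]\subseteq\mathcal{O}_\mathfrak{p}$ and $\mathfrak{p}\mathcal{O}_\mathfrak{p}=p\mathcal{O}_\mathfrak{p}$, so the proposition reduces to showing that each rational integer $\prod_k\binom{n+j_k-1}{j_k}$ appearing here is divisible by $p^{\max\{0,\ord_p(n)-\ord_p(m)\}}$.

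I would establish this from two ingredients. The first is the pointwise bound
\begin{align*}
\ord_p\binom{n+j-1}{j}\ \geq\ \max\{0,\ \ord_p(n)-\ord_p(j)\}\qquad(n,j\geq1),
\end{align*}
immediate from the identity $\binom{n+j-1}{j}=\frac nj\binom{n+j-1}{j-1}$ together with the integrality of $\binom{n+j-1}{j-1}$ and of $\binom{n+j-1}{j}$. The second is the purely arithmetic inequality: whenever $j_1,j_2,\dots\geq0$ are almost all zero and $\sum_k kj_k=m\geq1$, then
\begin{align*}
\sum_{k:\,j_k\geq1}\max\{0,\ \ord_p(n)-\ord_p(j_k)\}\ \geq\ \max\{0,\ \ord_p(n)-\ord_p(m)\}.
\end{align*}
For this, put $a=\ord_p(n)$, $b=\ord_p(m)$ and assume $a>b$ (the case $a\leq b$ being trivial); split the support of $(j_k)$ into $S=\{k:j_k\geq1,\ \ord_p(j_k)<a\}$ and its complement. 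Indices outside $S$ contribute multiples of $p^a$ to $m=\sum_k kj_k$, and $S$ is non-empty (otherwise $p^a\mid m$, contradicting $b<a$); hence $\sum_{k\in S}kj_k\equiv m\pmod{p^a}$ forces $\ord_p\bigl(\sum_{k\in S}kj_k\bigr)=b$, so some $k_0\in S$ has $\ord_p(j_{k_0})\leq\ord_p(k_0j_{k_0})\leq b$, and this one index already contributes $a-\ord_p(j_{k_0})\geq a-b$. Combining the two ingredients term by term yields the claimed divisibility.

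The one genuinely substantial point is this last inequality: the naive bound ``each factor with $j_k\geq1$ carries at least one $p$'' is far too weak, and one must use that the weighted constraint $\sum_k kj_k=m$ transports $p$-divisibility from $m$ back to the multiset of the $j_k$. The remainder is bookkeeping — the passage to the product via Dwork's Lemma, the binomial expansion, the reduction to rational integers, and the trivial range $\ord_p(n)\leq\ord_p(m)$, where in any case $Y\in1+z\mathcal{O}\left[D^{-1}\right]\llbracket z\rrbracket$ makes $\bigl[Y^n\bigr]_m$ integral outright. It remains only to check the degenerate cases — some $q_k=0$ (harmless, as those terms vanish) and small values of $n$ or $m$ — none of which causes trouble.
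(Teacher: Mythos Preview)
Your proof is correct, and the core combinatorial step --- forcing at least one $j_{k_0}$ with $\ord_p(j_{k_0})\leq\ord_p(m)$ from the weighted constraint $\sum_k kj_k=m$ --- is exactly the same pigeonhole-via-valuation idea that drives the paper's argument. The route you take to reach that step, however, is genuinely different. The paper does not pass through the infinite-product form $(ii)$ of Dwork's Lemma at all; instead it uses only $(iv)$ (that the coefficients $y_m$ of $Y$ lie in $\mathcal{O}_\mathfrak{p}$) together with the Bell-transformation identity
\[
\tilde y_m=n\sum_{k=1}^m\frac1k\binom{n-1}{k-1}\frac{k!}{m!}B_{m,k}(!y),
\]
and then runs the valuation argument on the multinomial coefficients hidden inside $\frac{k!}{m!}B_{m,k}(!y)$, over multi-indices $\alpha\in\pi(m,k)$ constrained by $\sum i\alpha_i=m$.

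What each approach buys: yours is more elementary and self-contained --- no Bell polynomials, only the binomial series and the single identity $\binom{n+j-1}{j}=\frac nj\binom{n+j-1}{j-1}$ --- at the price of invoking the stronger equivalence $(i)\Leftrightarrow(ii)$ in Dwork's Lemma. The paper's version stays within the Bell-transformation framework used throughout the article and needs only the integrality of the $y_m$, but carries the overhead of the partial Bell polynomial machinery. Structurally the two arguments are dual: you factor $Y$ multiplicatively and track one binomial per Euler factor, while the paper expands $Y^n$ additively and tracks one multinomial per partition of $m$.
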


\begin{proof}
Write
\begin{align*}
\exp(\smallint V(z))= 1+ \sum_{m=1}^\infty y_m z^m
\quad\text{and}\quad
\exp(n\smallint V(z)) = 1 + \sum_{m=1}^{\infty} \tilde y_m z^m.
\end{align*}
Of course, by Dwork's Integrality \Cref{characterization of 1-functions} $\tilde y _m$ and $ y_m$ are elements $\mathcal{O}_\mathfrak{p}$ for all $m\in \mathbb{N}$.
In particular, by using the functional equations satisfied by Bell transformations given in \cite{bir18}, we obtain $\left( \frac{\tilde y_m}n \right)_{m\in \mathbb{N}} = \mathscr{Y}_{0,0,-1,n}\left((y_m)_{m\in \mathbb{N}}\right) $.
Explicitely,
\begin{align}\label{eq: 200724}
\tilde y_m = n \sum_{k=1}^m \frac1k\binom{n-1}{k-1}\frac{k!}{m!} B_{m,k}(!y).
\end{align}
Note, that $\displaystyle \binom{n-1}{k-1}\in \mathbb{N}_0 \text{ and } \frac{k!}{m!} B_{m,k}(!y) \in \mathcal{O}_\mathfrak{p}$, since $y \in \mathcal{O}_\mathfrak{p}^\mathbb{N}$.
Therefore, we have for all $1 \leq k\leq m$ with $\ord_p(k)\leq \ord_p(m)$
\begin{align*}
\frac nk \binom{n-1}{k-1} \frac{k!}{m!} B_{m,k}(!y) &\equiv 0 \mod p^{\ord_p(n)-\ord_p(k)} \mathcal{O}_\mathfrak{p} \\
&\equiv 0 \mod p^{\ord_p(n)-\ord_p(m)}\mathcal{O}_\mathfrak{p}.
\end{align*}
Hence, mod $p^{\ord_p(n)-\ord_p(m)}\mathcal{O}_\mathfrak{p}$, we can ignore those sumands in \cref{eq: 200724} where $\ord_p(k)\leq \ord_p(m)$.
Let $1\leq k\leq m$ with $\ord_p(k)>\ord_p(m)$.
In that case, we will show that
\begin{align}\label{20072422}
\frac{k!}{m!} B_{m,k}(!y) \equiv 0 \mod p^{\ord_p(k)- \ord_p(m)}\mathcal{O}_\mathfrak{p},
\end{align}
which implies \cref{eq: 3-framing integrality}.
We have
\begin{align}\label{eq: explicit partial bell polynomials}
\frac{k!}{m!} B_{m,k}(!y) = \sum_{\alpha \in \pi (m,k)} \binom{k}{\alpha_1, \dots , \alpha_{m-k+1}} \prod_{i=1}^{m-k+1} y_i^{\alpha_i},
\end{align}
where $\pi(m,k)\subset \mathbb{N}_0^{m-k+1}$ such that $\alpha \in \pi(m,k)$ if and only if
\begin{align*}
\sum_{i=1}^{m-k+1}\alpha_i = k
\quad \text{and}\quad
\sum_{i=1}^{m-k+1} i\alpha_i=m.
\end{align*}
Let $\alpha\in \pi(m,k)$.
Assume there is an $1\leq j\leq m-k+1$ such that $\ord_p(\alpha_j)\leq \ord_p(m)$. 
Then
\begin{align*}
\binom{k}{\alpha_1, \dots , \alpha_{m-k+1}}
&= \frac{k}{\alpha_j} \binom{k-1}{\alpha_1, \dots , \alpha_j-1, \dots , \alpha_{m-k+1}}\\
&\equiv 0 \mod p^{\ord_p(k)-\ord_p(\alpha_j)}\mathcal{O}_\mathfrak{p}\\
&\equiv 0 \mod p^{\ord_p(k)-\ord_p(m)}\mathcal{O}_\mathfrak{p}.
\end{align*}
Hence, mod $p^{\ord_p(n)-\ord_p(k)} \mathcal{O}_\mathfrak{p}$, we can ignore these sumands in \cref{eq: explicit partial bell polynomials}.
Suppose, that there exists an $\alpha \in \pi(m,k)$ such that for all $1 \leq i \leq m-k+1$ we have $\ord_p(\alpha_i) > \ord_p(m)$.
Then
\begin{align*}
\ord_p(m)= \ord_p\left(\sum_{i=1}^{m-k+1} i \alpha_i\right) \geq \min_{i=1,..., m-k+1} \ord_p( i \alpha_i ) > \ord_p(m),
\end{align*}
which is a contradiction.
We conclude
\begin{align*}
\tilde y_m \equiv 0 \mod p^{\max\{ 0 , \ord_p(n) - \ord_p(m) \}}\mathcal{O}_\mathfrak{p}
\end{align*}
in every case.
\end{proof}

\section{Bell Transformations and Framing}

In this section we will define the framing operators $\Phi^{+/-}$ as operators on formal power series via Bell transformations.
\Cref{prop: framing operators and properties} ensures that the framing operators fulfill the functional equation \cref{eq: intro framing op phi-}, which is a consequence of the theory of Bell transformations, and gives a formula for the coefficients.

\begin{definition}[framing operators $\Phi^{+/-}$]\label{framing}
Define the framing operator $\Phi^+ \colon \mathbb{C} \times z\mathbb{C} \llbracket z \rrbracket \rightarrow \mathbb{C}\llbracket z \rrbracket$, $(\nu,V ) \mapsto V^{(+,\nu)}(z)$ by the following composition
\begin{align*}
\Phi^+(\nu, - )\colon z \mathbb{C}\llbracket z\rrbracket
\xrightarrow{\smallint}
z \mathbb{C}\llbracket z\rrbracket
\xrightarrow{([-]_n)_{n\in\mathbb{N}}}
\mathbb{C}^{\mathbb{N}}
\xrightarrow{\mathscr{Y}_{\nu, 0,0,0}}
\mathbb{C}^{\mathbb{N}}
\xrightarrow{\mathcal{G}}
z \mathbb{C}\llbracket z\rrbracket
\xrightarrow{\delta}
z \mathbb{C}\llbracket z\rrbracket.
\end{align*}
Also, define $\Phi^-\colon \mathbb{C}\times z \mathbb{C}\llbracket z \rrbracket \rightarrow z \mathbb{C} \llbracket z \rrbracket$, $(\nu, V) \mapsto \Phi^-(\nu, V)$ by twisting sign convolution $z\mapsto (-1)^\nu z$, i.e.
\begin{align*}
	\Phi^-(\nu, V) = V^{(+,\nu)} \left( (-1)^\nu z \right).
\end{align*}
\end{definition}

\begin{proposition}\label{prop: framing operators and properties}
Let $V \in z \mathbb{C}\llbracket z \rrbracket$ and write $V^{(+,\nu)}:= \Phi^+(\nu, V)$ and $V^{(-,\nu)}:=\Phi^-(\nu, V)$.
Furthermore, write $a^+_n := \left[ V^{(+,\nu)} (z) \right]_n$ and $a^-_n := \left[ V^{(-,\nu)} (z) \right]_n$.
Then

\begin{enumerate}[$(i)$]
\item $\Phi^+$ and $\Phi^-$ define group actions of the additive group $(\mathbb{C}, +)$ on the set $z \mathbb{C}\llbracket z\rrbracket$ of formal power series with vanishing zeroth coefficient.
In particular, we have
\begin{align*}
\Phi^{+/-}(0,-) = \mathrm{id} \quad \text{and} \quad \Phi^{+/-}(\nu,-) \circ \Phi^+(\mu, -) = \Phi^{+/-} (\nu + \mu, - ).
\end{align*}

\item
The following functional equations are satisfied
\begin{align}\label{eq: framing op phi+}
\smallint V^{(+,\nu)}\left( z\exp(-\nu \smallint V(z)) \right) = \smallint V(z),
\end{align}
and
\begin{align} \label{eq: framing op phi-}
\smallint V^{(-,\nu)} \left( z\left( - \exp(- \smallint V(z))^\nu \right) \right) = \smallint V(z).
\end{align}

\item
For the coefficients $a_n^+$ and $a_n^-$ we have for all $n\in \mathbb{N}$,
\begin{align}\label{eq: framed coefficients}
a_n^+ = \frac1\nu\left[\frac{\exp(\nu n \smallint V(z))}{z^n}\right]_0,
\end{align}
and consequently by definition,
\begin{align}\label{eq: framed coefficients -}
a_n^-
= (-1)^{\nu n}a_n^+
=\frac{(-1)^{\nu n}}\nu \left[\frac{\exp(\nu n \smallint V(z))}{z^n}\right]_0.
\end{align}

\end{enumerate}
\end{proposition}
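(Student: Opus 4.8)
The plan is to carry out the three parts in an order that builds on the combinatorial machinery of Section 3. Part $(iii)$ is really the engine, so I would do a version of it first. The starting point is to unwind the definition of $\Phi^+(\nu,-)$ as the composition $\delta \circ \mathcal{G} \circ \mathscr{Y}_{\nu,0,0,0} \circ ([-]_n)_n \circ \smallint$. Writing $U := \smallint V \in z\mathbb{C}\llbracket z\rrbracket$ with coefficients $u_n = [U]_n$, the Bell transformation $\mathscr{Y}_{\nu,0,0,0}$ sends $(u_n)_n$ to the sequence $w_n = \frac{1}{n!}\sum_{k=1}^n \nu^{k-1}\,n^{k-1}\, B_{n,k}(!u)$ — here the product $\prod_{j=1}^{k-1}(an+bk+cj+d)$ specializes with $a=\nu$, $b=c=d=0$ to $(\nu n)^{k-1}$. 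Then $a_n^+ = [\delta\,\mathcal{G}(w)]_n = n w_n = \frac{1}{n}\sum_{k=1}^n \nu^{k-1} n^k \frac{1}{n!}\cdot n\cdot B_{n,k}(!u)$; reorganizing and using the generating-series identity $\frac{1}{k!}(\sum_j X_j z^j/j!)^k = \sum_{n\geq k} B_{n,k}(\mathfrak X) z^n/n!$ applied to $X_j/j! = u_j$, one recognizes $\sum_k \frac{1}{k!}(\nu n)^k B_{n,k}(!u)\,(\ldots)$ as exactly the $z^n$-coefficient of $\exp(\nu n\, U(z))$. Being careful with the factors of $\nu$ and $n$ that come from $\delta$ and from the index shift $k-1\mapsto k$, this yields the formula $a_n^+ = \frac{1}{\nu}\left[\exp(\nu n \smallint V(z))\,z^{-n}\right]_0$, which is \eqref{eq: framed coefficients}. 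Formula \eqref{eq: framed coefficients -} is then immediate from the definition $V^{(-,\nu)}(z) = V^{(+,\nu)}((-1)^\nu z)$, which multiplies the $n$-th coefficient by $(-1)^{\nu n}$.

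For part $(ii)$, I would derive the functional equation \eqref{eq: framing op phi+} from the convolution formula \eqref{eq: birmayer et al convolution}, or more slickly by a Lagrange-inversion/residue argument using the coefficient formula just established. The cleanest route: set $Y(z) = \exp(\smallint V(z))$, so $a_n^+ = \frac{1}{\nu n}\cdot n\cdot[\,Y(z)^{\nu n} z^{-n}\,]_0 $, and by the Lagrange inversion theorem the generating function $\sum_n a_n^+ w^n$ with $w = z\,Y(z)^{-\nu} = z\exp(-\nu\smallint V(z))$ reconstructs $\smallint V(z)$; spelled out, $\smallint V^{(+,\nu)}(w) = \smallint V(z)$ under this substitution, which is exactly \eqref{eq: framing op phi+}. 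Then \eqref{eq: framing op phi-} follows by substituting $z \mapsto (-1)^\nu z$ and tracking how $\exp(-\nu\smallint V(z))$ and the prefactor $z$ transform, giving the sign $-\exp(-\smallint V(z))^\nu$ inside the argument.

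For part $(i)$, the group-action statement $\Phi^{+}(0,-)=\mathrm{id}$ is clear from $\mathscr{Y}_{0,0,0,0}=\mathrm{id}$ (the empty product conventions make $y_n = \frac{1}{n!}B_{n,n}(!x)\cdot(\text{stuff}) $ collapse to $x_n$ — or just read it off the coefficient formula with $\nu = 0$ after taking a limit, since $\frac{1}{\nu}[\exp(\nu n U)z^{-n}]_0 \to [n\,U\, z^{-n}]_0 = n u_n = [V]_n$). The composition law $\Phi^+(\nu,-)\circ\Phi^+(\mu,-) = \Phi^+(\nu+\mu,-)$ is precisely an instance of Theorem \ref{Composition of Bell Transformations}: conjugating $\mathscr{Y}_{\nu,0,0,0}$ and $\mathscr{Y}_{\mu,0,0,0}$ by $\delta, \mathcal{G}, \smallint, ([-]_n)_n$ is harmless because $\delta\mathcal{G}$ and $\mathcal{G}\delta^{-1}$ cancel between consecutive factors, so the composite is $\mathscr{Y}_{?}\circ\mathscr{Y}_{?}$; with $e=\nu$, $d=f=0$, Theorem \ref{Composition of Bell Transformations} gives $\mathscr{Y}_{\nu,0,0,0}\circ\mathscr{Y}_{\mu,0,0,0} = \mathscr{Y}_{\nu+\mu,0,0,0}$. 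Alternatively, and more transparently, additivity is immediate from the functional equation \eqref{eq: framing op phi+} by composing two substitutions. The case $\Phi^-(\nu,-)\circ\Phi^+(\mu,-)=\Phi^-(\nu+\mu,-)$ then follows by inserting the sign twist once at the end. The main obstacle I anticipate is bookkeeping rather than conceptual: matching the $\delta$/$\smallint$ normalizations, the index shift in the product $\prod_{j=1}^{k-1}$, and the factors of $\nu$ and $n$ so that the constant $\frac{1}{\nu}$ in \eqref{eq: framed coefficients} comes out exactly right — and handling the $\nu\to 0$ degeneration in $(i)$ cleanly (best avoided by proving $(i)$ via the functional equation instead of the coefficient formula).
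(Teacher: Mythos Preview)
Your plan is correct, and for part $(iii)$ it is actually more direct than the paper's. You compute $a_n^+$ by expanding the Bell transformation $\mathscr{Y}_{\nu,0,0,0}$ explicitly: with $b=c=d=0$ the product collapses to $(\nu n)^{k-1}$, and recognizing $\sum_{k\ge 1} (\nu n)^k\, B_{n,k}(!u)/n!$ as $[\exp(\nu n\,\smallint V)]_n$ gives \eqref{eq: framed coefficients} in one stroke. The paper instead takes the functional equation \eqref{eq: framing op phi+} as input (citing \cite[Cor.~4(iv)]{bir18}), applies the Lagrange inversion formula to obtain $a_n^+ = \bigl[V(z)\exp(\nu n\smallint V(z))/z^n\bigr]_0$, and then uses the integration-by-parts identity $\bigl[\delta\bigl(\exp(\nu n\smallint V)/z^n\bigr)\bigr]_0=0$ to simplify this to $\tfrac{1}{\nu}\bigl[\exp(\nu n\smallint V)/z^n\bigr]_0$. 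Your route avoids both the external citation and that simplification step for $(iii)$.

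For $(ii)$ you reverse the direction (coefficients $\Rightarrow$ functional equation via LIF), which is fine; but be aware that LIF applied to $H=\smallint V$ and $F(z)=z\exp(-\nu\smallint V)$ gives $[\smallint V(G(w))]_n = \tfrac{1}{n}\bigl[V\cdot\exp(\nu n\smallint V)/z^n\bigr]_0$, not literally $a_n^+/n$, so you will still need the same integration-by-parts identity to close the loop --- the step does not disappear, it just relocates. For $(i)$ both you and the paper invoke Theorem~\ref{Composition of Bell Transformations}; your observation that $\Phi^+(0,-)=\mathrm{id}$ is cleanest read off from $\mathscr{Y}_{0,0,0,0}=\mathrm{id}$ (the factor $(\nu n)^{k-1}$ kills $k\ge 2$ when $\nu=0$, leaving $B_{n,1}(!u)/n!=u_n$), which is preferable to the $\nu\to 0$ limit you flag as awkward.
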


\begin{proof}
The group action property for $\Phi^+$ follows immediately from \Cref{Composition of Bell Transformations} by setting $a=\nu$, $e=\mu$ and $b=c=d=f=0$.
Since the partial Bell polynomials $B_{n,k}(\mathfrak{X})$, $k\leq n$, have weight $n$ , it is obvious that the additional sign change does not effect the group action property of $\Phi^+$, i.e. $\Phi^+$ passes its group action property on to $\Phi^-$.
This proves $(i)$.

The functional equation \cref{eq: framing op phi+} is given by \cite[Cor. 4 $(iv)$]{bir18}.
By using the \textit{Lagrange Inversion Formula} (LIF) given below, we find the   formulas given in \cref{eq: framed coefficients} and \cref{eq: framed coefficients -}.
For further reference of the LIF, see for instance \cite{ges16}, \cite{mer06}.

\begin{theorem}[LIF]\label{LIF}
Let $F,H \in z \mathbb{C}\llbracket z \rrbracket$ and $G \in z \mathbb{C} \llbracket z \rrbracket$ the compositional inverse to $F$, i.e. $F(G(z)) = G(F(z)) = z$.
Then
\begin{align}\label{glif}
	[H(G(z))]_n=\frac1n
	\left[\frac{\delta H(z)}{F(z)^n}\right]_0.
\end{align}
\end{theorem}

Of course, \cref{eq: framed coefficients -} follows from \cref{eq: framed coefficients} by definition.
Therefore, it is sufficient to proof \cref{eq: framed coefficients}.
For
\begin{align*}
F(z)= z\exp (-\nu \smallint V(z))
\end{align*}
let $G \in z \mathbb{C} \llbracket z\rrbracket$ be the compositional inverse to $F$, $F(G(z)) = G(F(z)) = z$.
Hence,
\begin{align*}
	\smallint V^{(+,\nu)}\left( z \right) = \smallint V( G(z) )
\end{align*}
Using \cref{coefofintegratedfunction}, we have
\begin{align*}
	a^+_n = \left[ V^{(+,\nu)}(z)\right]_n
	= n \left[\smallint V^{(+,\nu)}(z)\right]_n.
\end{align*}
Then, \Cref{LIF} gives
\begin{align*}
a^+_n = \left[\frac{V(z)}{z^n}\exp(\nu n \smallint V(z))\right]_0.
\end{align*}
Since $[-]_0\circ \delta \equiv 0$ (compare with \cref{coefofderiv}) we obtain
\begin{align}\label{eq: partial integration for framing coefficients}
	0 = \left[\delta \left( \frac{\exp(\nu n \smallint V(z))}{z^n}\right) \right]_0
	= n\cdot \left[ \frac{\nu V(z) - 1}{z^n}\exp(\nu n \smallint V(z)) \right]_0.
\end{align}
Therefore,
\begin{align*}
	a_n^+ = \left[\frac{V(z)}{z^n}\exp(\nu n \smallint V(z))\right]_0
	= \frac1\nu\left[\frac{\exp(\nu n \smallint V(z))}{z^n}\right]_0,
\end{align*}
proving $(iii)$.

Let $\widetilde V(z) \in z \mathbb{C}\llbracket z \rrbracket$  be the power series satisfying the functional equation \cref{eq: framing op phi-}, i.e.
\begin{align*}
\smallint \widetilde V \left( z\left( - \exp(- \smallint V(z))^\nu \right) \right) = \smallint V(z),
\end{align*}
and write $\tilde a_n := \left[ \widetilde V(z) \right]_n$ for all $n\in \mathbb{N}$.
Then, by an analogue calculation as for $a^+_n$ we find
\begin{align*}
\tilde a_n&=\frac{(-1)^{\nu n}}\nu \left[\frac{\exp(\nu n \smallint V(z))}{z^n}\right]_0
= a_n^-, \quad \text{for all $n\in \mathbb{N}$}.
\end{align*}
Hence, $\widetilde V = V^{(-,\nu)}$, proving $(ii)$.
\end{proof}

\section{Wolstenholme's Theorem: Harmonic Sums, Binomials and a new Generalization}\label{section: wolstenholme's theorem}

The goal of the present section is to prove a generalization of Wolstenholme's Theorem given by \Cref{lemma: general wolstenholme}.
For a survey on Wolstenholme's Theorem see \cite{mes11}.

In 1862, J. Wolstenholme proved that for all primes $p \geq 5$ we have
\begin{align}\label{eq: wolstenholme's original thm}
	\binom{2p-1}{p-1} \equiv 1 \mod p^3.
\end{align}
This result is originally known as Wolstenholme's theorem, see \cite{wol1862} for the original work.
As pointed out by Rosen in \cite{ros13}, the related congruence on harmonic numbers $H_n:= \sum_{k=1}^n\frac1k$, stating that for all primes $p\geq 5$,
\begin{align}\label{eq: wolstenholme's thm}
	H_{p-1} = \sum_{k=1}^{p-1} \frac1k \equiv 0 \mod p^2
\end{align}
(which was discovered 80 years earlier by E. Waring in 1782 (see \cite{war1782}) and later by C. Babbage in 1819 (see \cite{bab1819})), is in fact equivalent to Wolstenholme's original result.
In modern literature, \cref{eq: wolstenholme's thm} is referred to as \textit{Wolstenholme's Theorem}.
More generally, we have
\begin{theorem}[“Wolstenholme's Theorem”, Waring-Babbage, see for instance \cite{gessel83}]\label{thm: waring-babbage}
Let $p$ be a prime and let $\epsilon_p$ be $2$, $1$, or $0$ according to whether $p$ is $2$, $3$ or $\geq 5$, respectively. Then, for all $n\in \mathbb{N}$,
\begin{align}\label{eq: waring-babbage}
\sum_{\substack{k=1\\ p\nmid k}}^n \frac1k \equiv 0 \mod p^{\max \{ 0 , 2 \ord_p(n) - \epsilon_p \}}\mathbb{Z}_p.
\end{align}
\end{theorem}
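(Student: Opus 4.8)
The plan is to reduce the congruence, through two short reindexings, to the closed formula $\sum_{t=1}^{N}t^{2}=\tfrac16N(N+1)(2N+1)$, and then to bootstrap from the interval $[1,p^{a}]$ to an arbitrary $n$. We work throughout in $\mathbb{Z}_{p}$. If $\ord_{p}(n)=0$ the prescribed modulus is $p^{0}$ and there is nothing to prove, so put $a:=\ord_{p}(n)\ge1$; moreover, when $p=2$ the value $a=1$ again forces the modulus $p^{0}$, so there we may even assume $a\ge2$. Write $S(n):=\sum_{1\le k\le n,\, p\nmid k}1/k$ and, for $j\ge1$, $T_{j}:=\sum_{1\le t\le p^{a},\, p\nmid t}1/t^{j}\in\mathbb{Z}_{p}$, so that $T_{1}=S(p^{a})$.

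The first step is to compute $\ord_{p}(T_{2})$. Since $t\mapsto t^{-1}$ is a bijection of $(\mathbb{Z}/p^{a}\mathbb{Z})^{\times}$, reduction modulo $p^{a}$ gives $T_{2}\equiv\sum_{1\le t\le p^{a},\, p\nmid t}t^{2}\pmod{p^{a}}$, and the right-hand side equals $\tfrac16p^{a}(p^{a}+1)(2p^{a}+1)-\tfrac16p^{a+1}(p^{a-1}+1)(2p^{a-1}+1)$. Reading off its $p$-adic valuation, while keeping in mind that $\ord_{p}(6)=1$ for $p\in\{2,3\}$, gives $\ord_{p}(T_{2})\ge a$ when $p\ge5$ and $\ord_{p}(T_{2})\ge a-1$ when $p\in\{2,3\}$.

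The second step bootstraps this to $S(p^{a})=T_{1}$: pairing the summand $1/t$ with $1/(p^{a}-t)$ (the index $t=p^{a}$ does not occur, and there is no fixed point under $t\mapsto p^{a}-t$ in the remaining cases) and expanding the geometric series in $p^{a}/t$ gives $\tfrac1t+\tfrac1{p^{a}-t}=\tfrac{p^{a}}{t(p^{a}-t)}\equiv-\tfrac{p^{a}}{t^{2}}\pmod{p^{2a}}$, so $S(p^{a})\equiv-p^{a}\sum_{\mathrm{pairs}}1/t^{2}\pmod{p^{2a}}$. Because $(p^{a}-t)^{2}\equiv t^{2}\pmod{p^{a}}$ one also has $2\sum_{\mathrm{pairs}}1/t^{2}\equiv T_{2}\pmod{p^{a}}$, and combining this with the first step yields $\ord_{p}(S(p^{a}))\ge2a-\epsilon_{p}$ in each of the three cases. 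Finally, for an arbitrary $n=p^{a}m$ with $p\nmid m$, split $\{1,\dots,n\}$ into the $m$ blocks $(ip^{a},(i+1)p^{a}]$; within a block $p\nmid(ip^{a}+t)\iff p\nmid t$, and expanding $\tfrac1{ip^{a}+t}=\sum_{r\ge0}(-1)^{r}i^{r}p^{ar}/t^{r+1}$ in $\mathbb{Z}_{p}$ and interchanging summations (legitimate, since only finitely many $r$ contribute modulo any fixed power of $p$) gives $S(n)=\sum_{r\ge0}(-1)^{r}p^{ar}\bigl(\sum_{i=0}^{m-1}i^{r}\bigr)T_{r+1}$. The $r=0$ term is $m\,S(p^{a})$, of valuation $\ge2a-\epsilon_{p}$; the $r=1$ term is $-p^{a}\binom{m}{2}T_{2}$, of valuation $\ge a+\ord_{p}(T_{2})\ge2a-\epsilon_{p}$; and each term with $r\ge2$ has valuation $\ge ar\ge2a$. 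Hence $\ord_{p}(S(n))\ge2a-\epsilon_{p}=\max\{0,2\ord_{p}(n)-\epsilon_{p}\}$, as claimed.

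The only genuinely delicate point is the $p$-adic bookkeeping for $p=2$ and $p=3$: there $\tfrac16$ is not a $p$-adic unit, so the valuation of $\sum_{t}t^{2}$ — and with it $T_{2}$ — drops by one, and when $p=2$ the bootstrap must be carried out without literally halving, extracting $\ord_{2}\!\bigl(\sum_{\mathrm{pairs}}1/t^{2}\bigr)\ge a-2$ from $2\sum_{\mathrm{pairs}}1/t^{2}\equiv T_{2}\pmod{2^{a}}$ (legitimate since $T_{2}$ is even once $a\ge2$). Everything else reduces to bijections of unit groups and convergent geometric series over $\mathbb{Z}_{p}$.
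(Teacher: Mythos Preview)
Your proof is correct. Both you and the paper use the same reflection identity---pairing $k$ with $n-k$ (or $t$ with $p^{a}-t$) to reduce $\sum 1/k$ to a multiple of $\sum 1/k^{2}$---so the overall architecture is shared. The genuine difference lies in how the quadratic sum is estimated. The paper applies the reflection directly on $[1,n]$ and then \emph{defers} the estimate $\sum_{p\nmid k}^{n}1/k^{2}\equiv 0\pmod{p^{\ord_{p}(n)-\epsilon_{p}+\delta_{p,2}}}$ to its later, more general Theorem~\ref{lemma: general wolstenholme} (specialized to $a_{k}\equiv 1$) together with Remark~\ref{remark: wolstenholme for p=2} for $p=2$; that theorem is proved by a rather different trick (choosing an auxiliary $q\equiv 1\pmod N$ and using the bijection $\ell\mapsto q\ell$). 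You instead first localize to the interval $[1,p^{a}]$, then compute $T_{2}$ directly via the bijection $t\mapsto t^{-1}$ on $(\mathbb{Z}/p^{a}\mathbb{Z})^{\times}$ and the closed form $\sum_{t=1}^{N}t^{2}=\tfrac{1}{6}N(N+1)(2N+1)$, and only afterwards pass to general $n=p^{a}m$ by the block decomposition and a geometric-series expansion. Your route is fully self-contained and arguably more elementary (no forward reference needed), while the paper's route has the virtue of exhibiting the classical case as an instance of the generalized Wolstenholme machinery that is the paper's main technical contribution.
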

\begin{proof}
First check the identity, for $1\leq k\leq n$,
\begin{align*}
\frac1k +\frac1{n-k} = -\frac n{k^2} + \frac{n^2}{k^2(n-k)}.
\end{align*}
Note that the sum given in \cref{eq: waring-babbage} is trivially a $p$-adic integer.
Therefore, w. l. o. g., we assume $2\ord_p(n)-\epsilon_p \geq 0$.
Then,
\begin{align*}
2 \sum_{\substack{k=1\\ p\nmid k}}^n \frac1k
&= \sum_{\substack{k=1 \\ p\nmid k}}^n \left( \frac1k + \frac1{n-k}\right)
= \sum_{\substack{k=1 \\ p\nmid k}}^n \left(-\frac n{k^2} + \frac{n^2}{k^2(n-k)}\right)\\
&= -n \sum_{\substack{k=1 \\ p\nmid k}}^n \frac1{k^2} + n^2
\sum_{\substack{k=1 \\ p\nmid k}}^n \frac1{k^2(n-k)}
\equiv -n \sum_{\substack{k=1 \\ p\nmid k}}^n \frac1{k^2} \mod p^{2\ord_p(n)} \mathbb{Z}_p.
\end{align*}
Now, we immediately observe that the assertion \cref{eq: waring-babbage} is equivalent to the validity of the following congruence,
\begin{align}\label{eq: wolstenholme's dilogarithmic sum}
\sum_{\substack{k=1 \\ p\nmid k}}^n \frac1{k^2} \equiv 0 \mod p^{\ord_p(n)-\epsilon_p +\delta_{p,2}} \mathbb{Z}_p.
\end{align}
A proof of \cref{eq: wolstenholme's dilogarithmic sum} is given in \cite[Lemma 1]{gessel83}.
What is more, we will prove  \Cref{lemma: general wolstenholme}, which is a generalization of \cref{eq: wolstenholme's dilogarithmic sum} involving algebraic coefficients related to (rational) $2$-functions.
In particular, \cref{eq: wolstenholme's dilogarithmic sum} follows from \Cref{lemma: general wolstenholme} for $V(z)=\frac z{1-z}$ for $p\geq 3$ and from \Cref{remark: wolstenholme for p=2} for $p=2$.
\end{proof}

There are a number of generalizations and extensions of Wolstenholme's Theorem in terms of multiple harmonic sums and congruences among binomial coefficients.
The next theorem gives a generalization in yet another direction.
We will allow the nominator each summand be the folding of an periodic sequence with algebraic coefficients.
The motivation for this has its origin in the proof of \Cref{thm: thm 4.34}.

\begin{theorem}\label{lemma: general wolstenholme}
Let $p$ be an unramified prime in $K|\mathbb{Q}$.
Let $(a_k)_{k\in \mathbb{N}} \in \mathcal{O}_\mathfrak{p}^\mathbb{N}$  be a periodic sequence of periodicity $N$, i.e. $N\in \mathbb{N}$ is given by
\begin{align*}
N= \min \{ i\in \mathbb{N} \,|\, a_{k + i} = a_k \text{ for all $n\in \mathbb{N}$} \}.
\end{align*}
Then, for all $n \in \mathbb{N}$,
\begin{align*}
\sum_{\substack{k =1 \\ p\nmid k}}^n \frac{a_{n-k}a_k}{k^2}\equiv 0 \mod p^{ \max \{ 0 , \ord_p(n) - \epsilon_{p,N} \}}\mathcal{O}_\mathfrak{p},
\end{align*}
where
\begin{align*}
\epsilon_{p,N}= 
\begin{cases}
\max \{ \ord_2(N) , \ord_2(N+2) \} ,& \text{if $p=2$ and $2 \mid N$},\\
1+\ord_2(N+1),& \text{if $p=2$ and $2\nmid N$},\\
1+ \ord_3(N),&\text{if $p=3$,}\\
\ord_p(N),& \text{if $p\geq 5$}.
\end{cases}
\end{align*}
\end{theorem}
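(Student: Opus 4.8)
The plan is to imitate the proof of the unweighted congruence \eqref{eq: wolstenholme's dilogarithmic sum} (Gessel's Lemma~1) while carrying the periodic weight through the argument. Since every summand $a_{n-k}a_k/k^2$ lies in $\mathcal{O}_\mathfrak{p}$ (because $p\nmid k$), the congruence holds trivially unless $r:=\ord_p(n)>\epsilon_{p,N}$; so I assume this, and in particular $p\mid n$. The weight $c_k:=a_{n-k}a_k$ then lies in $\mathcal{O}_\mathfrak{p}$, is periodic with period dividing $N$ (both $a_{\cdot}$ and $a_{n-\cdot}$ are), and is palindromic about $n$: $c_{n-k}=c_k$, and $p\nmid k\iff p\nmid(n-k)$ on the range of summation.

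First I would symmetrize exactly as in the excerpt's treatment of the unweighted case: pairing $k\leftrightarrow n-k$ and using $\frac1{k^2}+\frac1{(n-k)^2}=\frac{n^2}{k^2(n-k)^2}-\frac2{k(n-k)}$ together with $\frac1{k(n-k)}=\frac1n\bigl(\frac1k+\frac1{n-k}\bigr)$ and the palindrome, one obtains
\[
\sum_{\substack{k=1\\ p\nmid k}}^{n}\frac{c_k}{k^2}\;\equiv\;-\frac2n\sum_{\substack{k=1\\ p\nmid k}}^{n}\frac{c_k}{k}\mod p^{\,2r-\delta_{2,p}}\mathcal{O}_\mathfrak{p},
\]
since $n^2\sum c_k/(k^2(n-k)^2)$ is divisible by $p^{2r}$. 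Because $2r-\delta_{2,p}\ge r-\epsilon_{p,N}$, it suffices to prove the weight-one congruence $\sum_{p\nmid k}^{n}c_k/k\equiv 0\bmod p^{\,2r-\epsilon_{p,N}-\delta_{2,p}}\mathcal{O}_\mathfrak{p}$, i.e.\ a statement of Waring--Babbage type with a periodic weight.

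For that last sum I would pass to residue classes. Replacing $1/k$ by an integer representative of $k^{-1}$ modulo the relevant power of $p$, the summand becomes periodic in $k$ of period $L:=\operatorname{lcm}\bigl(N,\,p^{\,2r-\epsilon_{p,N}-\delta_{2,p}}\bigr)$, so the sum is $\lfloor n/L\rfloor$ complete $L$-periods plus a bounded tail of length $s<L$. Reducing the tail's indices modulo $N$ (using $N\mid L$ and $n\equiv s\bmod L$) rewrites it as the same weight-one sum with $n$ replaced by a strictly smaller multiple of $p^{\,r-\epsilon_{p,N}}$, so it is absorbed by an induction on $n$ (the base cases $r\le\epsilon_{p,N}$ being trivial). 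One full $L$-period is a twisted power sum of the form $\sum_{u\in(\mathbb{Z}/p^{j})^{\times}}(\text{periodic weight})\cdot u$; here one uses that the periodic sequence $(a_k)$ in play — being (the coefficient sequence of) a $2$-function — is Frobenius-covariant, $a_{mp}\equiv\mathrm{Frob}_\mathfrak{p}(a_m)$, hence constant along $\langle p\rangle$-orbits modulo the prime-to-$p$ part of $N$; this lets the weight be factored out of each orbit-sum and reduces everything to the classical evaluations $\sum_{u\in(\mathbb{Z}/p^{j})^{\times}}u^{s}\equiv 0\bmod p^{\,j-\epsilon_p+\delta_{2,p}}$ that underlie Gessel's Lemma~1, the $\ord_p(N)$-loss being precisely the coarsening of the power-sum modulus forced by the $p$-part of the period.

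The main obstacle is pinning down the sharp constant $\epsilon_{p,N}$ for $p=2$ (and, less severely, $p=3$), where the crude power-sum bound loses a bounded but positive amount: recovering the stated value forces one to track the exact $2$-adic valuations of $N$, $N+1$ and $N+2$ (respectively of $N$ when $p=3$), i.e.\ to analyze how the period interacts with the arithmetic of $(\mathbb{Z}/2^{j})^{\times}$ and $(\mathbb{Z}/3^{j})^{\times}$; this small-prime case analysis is carried out separately (cf.\ \Cref{remark: wolstenholme for p=2}). Everything else is bookkeeping with $p$-adic valuations.
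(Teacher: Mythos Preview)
Your approach diverges substantially from the paper's and carries a genuine gap. The paper does \emph{not} reduce to a weight-one sum at all: it works directly with $S=\sum_{p\nmid k}c_k/k^2$ via a single multiplicative dilation. One chooses an integer $q$ with $q\equiv 1\pmod N$ and $p\nmid q$; since the weight $c_k=a_{n-k}a_k$ is $N$-periodic, replacing $\ell$ by $q\ell$ leaves the numerators unchanged while scaling each denominator by $q^2$, so that $S\equiv q^{-2}S\pmod{p^r}$ and hence $(q^2-1)S\equiv 0\pmod{p^r}$. A six-case analysis (on the residues of $N$, $N+1$, $N+2$ modulo $p$, treated separately for $p=2$, $p=3$, and $p\ge 5$) exhibits such a $q$ with $\ord_p(q^2-1)=\epsilon_{p,N}+\delta_{2,p}$; for $p=2$ the palindrome $c_k=c_{n-k}$ is invoked only at the very end to recover the missing factor of $2$. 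There is no induction, no tail, and no appeal to any structure of $(a_k)$ beyond bare $N$-periodicity.

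The gap in your argument is the appeal to ``Frobenius-covariance'': you assert that $(a_k)$, ``being (the coefficient sequence of) a $2$-function'', satisfies $a_{mp}\equiv\mathrm{Frob}_\mathfrak{p}(a_m)$ and is therefore constant along $\langle p\rangle$-orbits modulo the prime-to-$p$ part of $N$. But the theorem is stated for an \emph{arbitrary} periodic sequence in $\mathcal{O}_\mathfrak{p}$; no $s$-function hypothesis is available here, and without it the weights do not factor out of the orbit sums as you need. Relatedly, the intermediate bound you target for the weight-one sum, namely $\sum_{p\nmid k}^{n}c_k/k\equiv 0\pmod{p^{\,2r-\epsilon_{p,N}-\delta_{2,p}}}$, is already a weighted Waring--Babbage statement of the same depth as the goal, and the ``complete periods plus tail plus induction on $n$'' scheme does not close as written: the tail has length governed by $L=\mathrm{lcm}(N,p^{j})$ rather than by the $p$-adic order of its upper endpoint, so the inductive hypothesis is not available for it at the required exponent. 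The paper's dilation trick sidesteps both problems by never leaving the weight-two sum and never using anything about $(a_k)$ except its period.
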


\begin{proof}
Write $n=mp^r$ for $r=\ord_p(n)$ and suitable $m\in \mathbb{N}$ such that $\gcd(m,p)=1$.
Then, by using the geometric series $(1- xp)^{-1} = \sum_{k=0}^\infty (xp)^k$ for $x\in \mathbb{Z}_p$, we obtain
\begin{align}\label{eq: general wolstenholme sum permutation}
\sum_{\substack{k=0\\ p\nmid k}}^n\frac{a_{n-k}a_k}{k^2}
&\overset{k\mapsto \mu p^r + \ell}{=} \sum_{\mu=0}^{m-1}\sum_{\substack{\ell =0 \\ p\nmid \ell}}^{p^r} \frac{a_{(m-\mu)p^r - \ell}a_{\mu p^r + \ell}}{(\mu p^r + \ell)^2}
= \sum_{\mu=0}^{m-1}\sum_{\substack{\ell =0 \\ p\nmid \ell}}^{p^r} \frac{a_{(m-\mu)p^r - \ell}a_{\mu p^r + \ell}}{\ell^2(1 +\frac\mu\ell p^r)^2}\nonumber\\
&\hspace{1.25em}\equiv \sum_{\mu=0}^{m-1}\sum_{\substack{\ell=0 \\ p\nmid \ell}}^{p^r} \frac{a_{(m-\mu)p^r -\ell} a_{\mu p^r + \ell}}{\ell^2} \mod p^r \mathcal{O}_\mathfrak{p}.
\end{align}
Note, that the sum in \cref{eq: general wolstenholme sum permutation} is trivially an element in $\mathcal{O}_p$.
First, find $q \in \mathbb{N}$ such that $p \nmid q$, $N \mid q-1$, and -- whenever possible -- $p \nmid q+1$.
We have
\begin{itemize}
\item[\textit{Case 1:}]
If $p\nmid N+1$ and $p\nmid N+2$, choose $q = N+1$.
Trivially, $N\mid q - 1$.
In that case, $p \nmid q$, by definition, and
\begin{align*}
q^2 - 1 = (q-1)(q+1) = N (N+2),
\end{align*}
and therefore,
\begin{align*}
\ord_p(q^2-1) = \ord_p(N).
\end{align*}

\item[\textit{Case 2:}]
Let $p> 2$. If $p\mid N+1$ and $p \nmid N+2$, then choose $q=3N+1$.
Note that $p\mid N+1$ implies $p \nmid N$.
Indeed, $N \mid q-1$ and
\begin{align*}
q &= 3N+1 \equiv 2N \not\equiv 0 \mod p, \quad \text{and}\\
q+1 &= 3N + 2 \equiv N \not\equiv 0 \mod p, \quad \text{since $p\nmid N$ and  $p\neq 2$}.
\end{align*}
Also, $p \nmid q$, since $3N+1 = N-1 + 2(N+1)$ and $p\neq 2$.
Finally, $p\nmid q+1$, since $3N+2 = N + 2(N+1)$ and $p\nmid N$.
In this case,
\begin{align*}
q^2-1 = 3N (3N + 2 ).
\end{align*}
Hence,
\begin{align*}
\ord_p( q^2 - 1 ) = \ord_p(N) + \delta_{p,3}.
\end{align*}

\item[\textit{Case 3:}]
Let $p=2$ and $p\mid N+1$, $p\nmid N+2$.
Then choose $q = 2N+1$.
Observe, that $N \mid q-1$ and
\begin{align*}
q &= 2N + 1 \equiv N \neq 0 \mod 2, \quad \text{since $p\nmid N$}.
\end{align*}
At the same time,
\begin{align*}
\ord_2(q^2 - 1) = \ord_2((q-1)(q+1))= \ord_2(4N (N+1)) = 2 + \ord_2(N+1).
\end{align*}

\item[\textit{Case 4:}]
Let $p \not\in\{2,3\}$ and $p \nmid N+1$ and $p \mid N +2$, then choose $q = 2 N + 1$.
Trivially, $N| q - 1$ and we have
\begin{align*}
q &= 2 N + 1 \equiv -3 \not\equiv 0 \mod p, \quad\text{since $p\neq 3$ and,}\\
q+1 &= 2 N + 2 \equiv N \not \equiv 0 \mod p, \quad \text{since $p \neq 2$}.
\end{align*}
In that case,
\begin{align*}
q^2 - 1 = 4N (N+ 1).
\end{align*}
Hence,
\begin{align*}
\ord_p(q^2-1) = \ord_p(N) .
\end{align*}

\item[\textit{Case 5:}]
Let $p= 3$, $3\nmid N+1$ and $3\mid N+2$, then choose $q= 3N + 1$.
Note that $ p=3 $ implies $3\nmid N$.
Hence, $N \mid q-1$ and we have
\begin{align*}
q &= 3N+1\equiv N -3 \equiv N \not \equiv 0 \mod 3.
\end{align*}
Furthermore,
\begin{align*}
q^2 - 1 =3N(3N+2)
\end{align*}
and therefore,
\begin{align*}
\ord_3(q^2-1) = 1.
\end{align*}

\item[\textit{Case 6:}]
Let $p = 2$, $2 \nmid N+1$ and $2 \mid N + 2$ (i.e. $2\mid N$), then choose $q= N + 1$.
We have
\begin{align*}
	q= N+1 \not \equiv 0 \mod 2.
\end{align*}
Then
\begin{align*}
q^2 -1 = N^2 +2N = N(N+2)
\end{align*}
and therefore
\begin{align*}
\ord_2(q^2-1) = \ord_2(N)+\ord_2(N+2)= 1 + \max\{ \ord_2(N),\ord_2(N+2)\}.
\end{align*}

\end{itemize}
Since we may find $q$ such that $q \equiv 1 \mod N$ in every case, we have $a_{m+ q\ell}= a_{m+\ell}$ for all $m\in \mathbb{N}_0$ and $\ell\in \mathbb{N}$.
Since $p\nmid q$, we see that multiplication  by $q$ mod $p^{r}$ gives a bijection on $\left(\mathbb{Z}/p^r\mathbb{Z}\right)^\times$ and hence, we may also permute the sumands in \cref{eq: general wolstenholme sum permutation} by the transformation $\ell \mapsto q\ell$.
Therefore,
\begin{align*}
\sum_{\substack{k=0\\ p\nmid k}}^n\frac{a_{n-k}a_k}{k^2}
&\equiv\sum_{\mu=0}^{m-1}\sum_{\substack{\ell=0 \\ p\nmid \ell}}^{p^r} \frac{a_{(m-\mu)p^r -\ell} a_{\mu p^r + \ell}}{\ell^2} \mod p^r \mathcal{O}_\mathfrak{p}\\
&\equiv\sum_{\mu=0}^{m-1}\sum_{\substack{\ell=0 \\ p\nmid \ell}}^{p^r} \frac{a_{(m-\mu)p^r -q\ell} a_{\mu p^r + q\ell}}{(q\ell)^2}\mod p^r \mathcal{O}_\mathfrak{p}\\
&= \frac1{q^2}\sum_{\mu=0}^{m-1}\sum_{\substack{\ell=0 \\ p\nmid \ell}}^{p^r} \frac{a_{(m-\mu)p^r -\ell} a_{\mu p^r + \ell}}{\ell^2}\equiv
\frac1{q^2}\sum_{\substack{k=0\\ p\nmid k}}^n\frac{a_{n-k}a_k}{k^2} \mod p^r\mathcal{O}_\mathfrak{p}.
\end{align*}
Equivalently,
\begin{align*}
\frac{q^2-1}{q^2} \cdot \sum_{\substack{k=0 \\ p \nmid k}}^n \frac{a_k a_{n-k}}{k^2}
\equiv 0 \mod p^r \mathcal{O}_\mathfrak{p}.
\end{align*}
By the above choice of $q$ and recalling $q^2-1 \equiv 0 \mod p^{\epsilon_{p,N} + \delta_{p,2}} \mathbb{Z}$, we therefore conclude
\begin{align*}
\sum_{\substack{k=0 \\ p \nmid k}}^n \frac{a_k a_{n-k}}{k^2}
\equiv 0 \mod p^{r-\epsilon_{p,N}-\delta_{2,p}}\mathcal{O}_\mathfrak{p}.
\end{align*}
For $p>2$, we are finished.
For $p=2$ we may in particular assume $\ord_2(n)=r\geq 1$.
By using the symmetry (i.e. the invariance of $k\mapsto n-k$) of the coefficients $a_ka_{n-k}$, we have
\begin{align*}
\sum_{\substack{k=0 \\ k\text{ odd}}}^n \frac{a_{n-k}a_k}{k^2}\equiv 2 \cdot \sum_{ \substack{ k=0 \\ k\text{ odd}}}^{\nicefrac n2} \frac{a_{n-k} a_k}{k^2} \mod 2^r \mathcal{O}_2.
\end{align*}
Then by the same calculation as for general $p$, and the same choice of $q\in \mathbb{Z}$, we find
\begin{align*}
\sum_{ \substack{ k=0 \\ k\text{ odd}}}^{\nicefrac n2} \frac{a_{n-k} a_k}{k^2} \equiv \frac1{q^2} \sum_{ \substack{ k=0 \\ k\text{ odd}}}^{\nicefrac n2} \frac{a_{n-k} a_k}{k^2}\mod 2^r \mathcal{O}_2.
\end{align*}
Equivalently,
\begin{align*}
\frac{q^2 - 1}{q^2} \sum_{ \substack{ k=0 \\ k\text{ odd}}}^{\nicefrac n2} \frac{a_{n-k} a_k}{k^2} \equiv 0 \mod 2^r\mathcal{O}_2.
\end{align*}
Therefore,
\begin{align*}
\sum_{\substack{k=0 \\ k\text{ odd}}}^n \frac{a_{n-k}a_k}{k^2} \equiv 0 \mod 2^{r - \epsilon_{ p , N } } \mathcal{O}_2,
\end{align*}
as stated.
\end{proof}

\begin{remark}[$p=2$]\label{remark: wolstenholme for p=2}
In the special case of \cref{eq: wolstenholme's dilogarithmic sum}, for $p=2$ and $V(z)=\frac z{1-z}$ (i.e. $a_n=1$ for all $n\in \mathbb{N}$) one can improve the $2$-adic estimation.
In that case, we find
\begin{align}\label{eq: p=2 classic wolstenholme}
\sum_{\substack{k=1 \\ k \text{ odd}}}^n \frac1{k^2} \equiv 0 \mod 2^{\ord_2(n)-1} \mathbb{Z}_2,
\end{align}
which is sharper than what \Cref{lemma: general wolstenholme} permits.
The reason for this is given by \cref{eq: remark wolstenholme p=2} below.
We prove \cref{eq: p=2 classic wolstenholme} for the sake of completeness.
Write $n=2^rm$ for $r=\ord_2(n)$ and $m\in\mathbb{N}$, $\gcd(2,m)=1$.
Since
\begin{align*}
\sum_{\substack{k=1 \\ k \text{ odd}}}^n \frac1{k^2}
= \sum_{\mu=0}^{m-1}\sum_{\substack{\ell=0 \\ \ell\text{ odd}}}^{2^r}\frac1{(\mu \cdot 2^r +\ell)^2}
\equiv \sum_{\mu=0}^{m-1}\sum_{\substack{\ell=0 \\ \ell\text{ odd}}}^{2^r}\frac1{\ell^2} 
= m \cdot \sum_{\substack{k=0 \\ k \text{ odd}}}^{2^r} \frac1{k^2} \mod 2^r \mathbb{Z}_2,
\end{align*}
we may assume w.l.o.g. $n=2^r$.
For $r=1$ and $r=2$ the assertion is trivial.
Therefore, we may also assume $r\geq 3$.
In that case, every odd square $k^2$ has four square roots modulo $2^r$, namely, $\pm k$ and $2^{r-1} \pm k$.
Therefore,
\begin{align}\label{eq: remark wolstenholme p=2}
\sum_{\substack{k=0 \\ k \text{ odd}}}^{2^r} \frac1{k^2}
\equiv 4\cdot \sum_{\substack{k=0 \\ k \text{ odd}}}^{2^{r-2}} \frac1{k^2}\mod 2^r.
\end{align}
Furthermore, the multiplication $k\mapsto 3k$ gives a bijection on $ \left( \mathbb{Z} / 2^r \mathbb{Z} \right) ^\times$ and
\begin{align*}
\sum_{\substack{ k=0 \\ k \text{ odd}}}^{2^{r-2}} \frac1{k^2}
\equiv \sum_{\substack{ k=0 \\ k \text{ odd}}}^{2^{r-2}} \frac1{(3k)^2} 
= \frac19 \cdot \sum_{\substack{ k=0 \\ k \text{ odd}}}^{2^{r-2}} \frac1{k^2}\mod 2^r.
\end{align*}
Equivalently,
\begin{align*}
\frac89\cdot \sum_{\substack{ k=0 \\ k \text{ odd}}}^{2^{r-2}} \frac1{k^2}
\equiv 0 \mod 2^r.
\end{align*}
Hence,
\begin{align}\label{eq: remark wolstenholme p=2 - 2}
\sum_{\substack{k=0 \\ k\text{ odd}}}^{2^{r-2}} \frac1{k^2}\equiv 0 \mod 2^{r-3}.
\end{align}
Inserting \cref{eq: remark wolstenholme p=2 - 2} in \cref{eq: remark wolstenholme p=2} leads to \cref{eq: p=2 classic wolstenholme}.
\end{remark}

We will now state the so-called \textit{Jacobsthal-Kazandzidis congruence} (\Cref{thm: jacobsthal-kazandzidis}) which was first discovered by Jacobsthal as a corollary to his work \cite{bru52} in 1949 and later in a more general formulation by Kazandzidis in 1969 (see \cite{kaz69}) and Trakhtman in 1974 (see \cite{tra74}).
Nonetheless, the proof of \Cref{thm: jacobsthal-kazandzidis} as given in \cite{gessel83} makes use of the congruence relations of harmonic sums as stated by \Cref{thm: waring-babbage}, \Cref{lemma: general wolstenholme} and \Cref{remark: wolstenholme for p=2}.
The Jacobsthal-Kazandzidis congruence also follows from \Cref{thm: intro 2-integrality of fractional framing} (2) as we will see in \Cref{section: fractional framing and examples}.
Moreover, the proof of \Cref{thm: thm 4.34} may be considered as a generalization of the proof of \Cref{thm: jacobsthal-kazandzidis}.
Therefore, the Jacobsthal-Kazandzidis congruence can be considered to be a prototype of the statements \Cref{thm: thm 4.34} and \Cref{thm: intro 2-integrality of fractional framing} (2).

\begin{theorem}[Jacobsthal-Kazandzidis]\label{thm: jacobsthal-kazandzidis}
Let $a,b\in \mathbb{N}_0$ be non-negative integers, $r\in \mathbb{N}$ a positive integer, and let $p$ be a prime.
Then we have
\begin{align*}
\binom{ap^r}{bp^r} \equiv \binom{ap^{r-1}}{bp^{r-1}} \mod p^{3 r - \epsilon_p},
\end{align*}
where $\epsilon_p$ is (as in \Cref{thm: waring-babbage}) $2$, $1$, or $0$, whether $p$ is $2$, $3$, or greater than $3$, respectively.
\end{theorem}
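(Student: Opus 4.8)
The plan is to carry out the classical derivation of \Cref{thm: jacobsthal-kazandzidis} due to Gessel \cite{gessel83}, feeding in the harmonic‑sum estimates just established (\Cref{thm: waring-babbage}, \cref{eq: wolstenholme's dilogarithmic sum}, \Cref{remark: wolstenholme for p=2}); the later proof of \Cref{thm: thm 4.34} will be a ``higher'' analogue of the same scheme. We may assume $0\le b\le a$ and argue $p$-adically. The first step is a purely multiplicative reduction. Put $f(m):=\prod_{1\le j\le m,\ p\nmid j}j$ (with $f(0)=1$), a product of $p$-adic units. From the factorisation $m!=p^{\ord_p(m!)}\prod_{i\ge0}f(\lfloor m/p^i\rfloor)$, the identities $\lfloor ap^r/p^i\rfloor=\lfloor ap^{r-1}/p^{i-1}\rfloor$ for $i\ge1$ (and the same with $b$ and $a-b$), and the fact that $\ord_p\binom{ap^r}{bp^r}=\frac{s_p(b)+s_p(a-b)-s_p(a)}{p-1}$ (with $s_p$ the base-$p$ digit sum) is independent of $r$ by Legendre's formula, the contributions from levels $i\ge1$ telescope and the powers of $p$ cancel, leaving
\[
U\ :=\ \frac{\binom{ap^r}{bp^r}}{\binom{ap^{r-1}}{bp^{r-1}}}\ =\ \frac{f(ap^r)}{f(bp^r)\,f((a-b)p^r)}\ \in\ \mathbb{Z}_p^\times .
\]
Since $\binom{ap^{r-1}}{bp^{r-1}}\in\mathbb{Z}_p$, it then suffices to prove $U\equiv1\bmod p^{3r-\epsilon_p}$.

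Next I would pass to $p$-adic logarithms. Collecting the factors of $f(cp^r)$ by their residue $\ell$ modulo $p^r$ (so $1\le\ell\le p^r$, $p\nmid\ell$, with block index $0\le t\le c-1$), each $\ell$ occurs with total multiplicity $a$ in $f(ap^r)$ and $b+(a-b)=a$ in $f(bp^r)f((a-b)p^r)$, so the ``$\ell$-parts'' cancel and $U=\prod_{\ell}g_\ell$ with each $g_\ell\in1+p^r\mathbb{Z}_p$. Expanding $\log(1+tp^r/\ell)=\sum_{k\ge1}\frac{(-1)^{k+1}}{k}(tp^r/\ell)^k$ (convergent since $\ord_p(tp^r/\ell)\ge r\ge1$) and interchanging the finite $\ell$-sum with the $k$-sum gives
\[
\log U\ =\ \sum_{k\ge1}\frac{(-1)^{k+1}p^{kr}}{k}\,\sigma_k\,h_k,\qquad
\sigma_k:=\sum_{t=0}^{a-1}t^k-\sum_{t=0}^{b-1}t^k-\sum_{t=0}^{a-b-1}t^k\in\mathbb{Z},\quad
h_k:=\sum_{\substack{1\le\ell\le p^r\\ p\nmid\ell}}\ell^{-k}\in\mathbb{Z}_p .
\]
It remains to estimate each term. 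For $k=1$, $\sigma_1=\binom a2-\binom b2-\binom{a-b}2=b(a-b)$ while \Cref{thm: waring-babbage} (with $n=p^r$) gives $\ord_p(h_1)\ge\max\{0,2r-\epsilon_p\}$, so the term lies in $p^{3r-\epsilon_p}\mathbb{Z}_p$. For $k=2$, \cref{eq: wolstenholme's dilogarithmic sum} (equivalently the constant-sequence case of \Cref{lemma: general wolstenholme}, or \Cref{remark: wolstenholme for p=2} when $p=2$) gives $\ord_p(h_2)\ge r-\epsilon_p+\delta_{p,2}$, which after absorbing $\ord_p(1/2)=-\delta_{p,2}$ again puts the term in $p^{3r-\epsilon_p}\mathbb{Z}_p$. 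For $k=3$ the term lies in $p^{3r-\ord_p(3)}\mathbb{Z}_p\subseteq p^{3r-\epsilon_p}\mathbb{Z}_p$, and for $k\ge4$ one checks directly that $kr-\ord_p(k)\ge3r-\epsilon_p$ for every $r\ge1$. Hence $\log U\in p^{3r-\epsilon_p}\mathbb{Z}_p$.

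Finally I would exponentiate: $U=\prod_\ell g_\ell\in1+p^r\mathbb{Z}_p$, and combining this with $\log U\in p^{3r-\epsilon_p}\mathbb{Z}_p$ via the $p$-adic exponential (which gives isomorphisms $p^j\mathbb{Z}_p\cong1+p^j\mathbb{Z}_p$ for $j\ge1$ when $p$ is odd and for $j\ge2$ when $p=2$) yields $U\equiv1\bmod p^{3r-\epsilon_p}$; the only instance left out, $p=2$ and $r=1$, is trivial because there $p^{3r-\epsilon_p}=2$ and $U$ is a $2$-adic unit. I expect the obstacle here to be bookkeeping rather than any substantial inequality: making the telescoping identity in Step~1 precise (in particular the $r$-independence of $\ord_p\binom{ap^r}{bp^r}$), justifying the logarithm/exponential manipulations $2$-adically, and --- the genuinely delicate point --- tracking the $2$- and $3$-adic losses in the $k=1,2$ terms so they meet the bound $3r-\epsilon_p$ exactly; all the real arithmetic input is the Wolstenholme-type content already proved in \Cref{thm: waring-babbage} and \cref{eq: wolstenholme's dilogarithmic sum}.
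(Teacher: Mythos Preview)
Your argument is correct and uses the same Wolstenholme-type inputs as the paper, but the execution is organised differently. The paper's proof is more direct: it writes the ratio as
\[
\binom{ap^r}{bp^r}\Big/\binom{ap^{r-1}}{bp^{r-1}}=\prod_{\substack{1\le k\le bp^r\\ p\nmid k}}\Bigl(1+p^r\tfrac{a-b}{k}\Bigr)
\]
by an elementary one-step telescoping (the $p$-divisible factors cancel against the $r-1$ product), then expands the finite product modulo $p^{3r}$ to get $1+p^r(a-b)F_1+p^{2r}(a-b)^2F_2$ with $F_1=\sum 1/k$ and $2F_2=F_1^2-\sum 1/k^2$ taken over $k\le bp^r$, $p\nmid k$, and finishes by plugging in \Cref{thm: waring-babbage} and \cref{eq: wolstenholme's dilogarithmic sum}. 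No $p$-adic $\log/\exp$ enters, and there is no separate treatment of $p=2$, $r=1$. Your route instead factors via $f(m)=\prod_{p\nmid j\le m}j$ and Legendre's formula, regroups by residues modulo $p^r$, and linearises with the $p$-adic logarithm; this cleanly separates the $(a,b)$-dependence into the integers $\sigma_k$ and puts the arithmetic into the power sums $h_k$ over a single block $\ell\le p^r$, at the cost of the extra $\exp/\log$ bookkeeping and the boundary case you flag. Both proofs feed in exactly the same harmonic-sum estimates; the paper's version is shorter and more elementary, while yours is closer in spirit to the ``expand $\exp$ and bound term by term'' scheme that the paper later uses for \Cref{thm: thm 4.34}.
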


\begin{proof}
We begin with
\begin{align}\label{eq: beware! this can be generalized!}
\binom{a p^r}{b p^r}\hspace{-0.25em}\left/\binom{a p^{r-1}}{b p^{r-1}}\right.
&= \prod_{k=1}^{bp^r}\frac{(a-b)p^r +k}k\cdot \prod_{k=1}^{bp^{r-1}} \frac k{(a-b)p^{r-1} + k}\nonumber\\
&= \prod_{\substack{k=1 \\ p\nmid k}}^{bp^r} \left(1 + p^r \frac{a-b}k\right)\nonumber\\
&\equiv 1 + p^r(a-b)F_1 + p^{2r}(a-b)^2 F_2 \mod p^{3r},
\end{align}
where $F_1$ and $F_2$ are given by the harmonic sums
\begin{align*}
F_1= \sum_{ \substack{ k=0 \\ p\nmid p} }^{bp^r} \frac1k \quad \text{and} \quad F_2 = \sum_{\substack{ i,j=0, i<j \\ p\nmid ij }}^{bp^r} \frac1{ij}.
\end{align*}
We have
\begin{align*}
2F_2 = \sum_{\substack{i\neq j \\ p\nmid ij}}^{bp^r} \frac1{ij}
= \left[ \sum_{i=1 ,\, p\nmid i}^{bp^r}\frac1i \right]^2 - \sum_{\substack{i=1 \\p\nmid i}}^{bp^r} \frac1{i^2}.
\end{align*}
By \Cref{thm: waring-babbage}, \Cref{lemma: general wolstenholme} and \Cref{remark: wolstenholme for p=2}, this implies
\begin{align*}
F_2\equiv 0 \mod p^{r-\varepsilon_p},
\end{align*}
and finally,
\begin{align*}
\binom{ap^r}{bp^r}\hspace{-0.25em}\left/\binom{ap^{r-1}}{bp^{r-1}}\right. \equiv 1 \mod p^{3r-\epsilon_p}.
\end{align*}
This finishes the proof.
\end{proof}

\section{Proof of \Cref{thm: thm 4.34}}\label{section: framed 3-functions}

The present section is dedicated to the proof of \Cref{thm: thm 4.34}.
In the introduction, we gave a short overview of the proof.
As a starting point, we recall the Integrality of Framing Theorem \cite[Thm. 8]{walcher16}.
For the proper use of the Bell transformations above, fix an embedding $K\hookrightarrow \mathbb{C}$.

\begin{theorem}[Framing preserves $2$-integrality]\label{integrality of framing}
The two maps
\begin{align} \label{eq: integrality of framing for Phi+}
\Phi^+\colon \mathbb{Z}\left[ D^{-1} \right] \times \mathcal{S}^2(K | \mathbb{Q})_{\{2\}}
\rightarrow \mathcal{S}^2(K | \mathbb{Q})_{\{2\}},
\end{align}
and
\begin{align} \label{eq: integrality of framing for Phi-}
\Phi^-\colon \mathbb{Z} \times \mathcal{S}^2(K | \mathbb{Q})
\rightarrow \mathcal{S}^2(K | \mathbb{Q}),
\end{align}
are well defined.
Furthermore, $\Phi^+$ defines a group action of the additive group $(\mathbb{Z}\left[ D^{-1} \right],+)$ on $\mathcal{S}^2(K|\mathbb{Q})_{\{2\}}$, while $\Phi^-$ defines a group action of the additive group $( \mathbb{Z},+ )$ on $\mathcal{S} ^2 ( K | \mathbb{Q} )$.
\end{theorem}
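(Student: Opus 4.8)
The plan is to deduce this from Theorem~8 of \cite{walcher16} after matching conventions, supplementing it with the purely formal input of \Cref{prop: framing operators and properties}. Concretely, the framing operator of \cite{walcher16} is uniquely pinned down inside $zK\llbracket z\rrbracket$ by the functional equation \cref{eq: intro framing op phi-}, which by \Cref{prop: framing operators and properties}~$(ii)$ is exactly the equation \cref{eq: framing op phi-} satisfied by $\Phi^{-}(\nu,V)$; hence our $\Phi^{-}$ agrees with theirs, and $\Phi^{+}(\nu,V)=V^{(-,\nu)}((-1)^{\nu}z)$ is its sign-untwisted variant. With this identification the two displays of the theorem become the two displays of \cite[Thm.~8]{walcher16}. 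The appearance of $\mathbb{Z}[D^{-1}]$ instead of $\mathbb{Z}$, and the removal of the prime $p=2$, on the $\Phi^{+}$ side are artefacts of the twist $z\mapsto(-1)^{\nu}z$ in \Cref{framing}: the exponent $(-1)^{\nu n}$ is only unambiguous for $\nu\in\mathbb{Z}$, and the identity $a_n^{-}=(-1)^{\nu n}a_n^{+}$ of \cref{eq: framed coefficients -} transports congruences cleanly only where $(-1)^{\nu pn}=(-1)^{\nu n}$, i.e.\ at odd $p$.

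For completeness I would also sketch a self-contained argument. Fix an unramified $\mathfrak{p}\mid(p)$ at which $V$ satisfies the local $2$-function condition (and, on the $\Phi^{+}$ side, take $p\ne 2$); then the local $1$-function condition holds at $\mathfrak{p}$ as well, so Dwork's Integrality \Cref{characterization of 1-functions}, applied $\mathfrak{p}$-adically, gives $Y:=\exp(\smallint V)\in 1+z\mathcal{O}_{\mathfrak{p}}\llbracket z\rrbracket$ and $\frob_{\mathfrak{p}}(Y)(z^p)/Y(z)^p=1+pW(z)$ with $W\in z\mathcal{O}_{\mathfrak{p}}\llbracket z\rrbracket$. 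Rewriting the coefficient formula \cref{eq: framed coefficients} as $\nu a_n^{+}=[Y(z)^{\nu n}]_n$, applying $\frob_{\mathfrak{p}}$, substituting $\frob_{\mathfrak{p}}(Y)(z^p)=Y(z)^p(1+pW(z))$, and using the identities $[G(z)]_n=[\mathscr{C}_p(G(z^p))]_n$ and $\nu a_{pn}^{+}=[\mathscr{C}_p(Y(z)^{\nu pn})]_n$, one obtains $\nu(\frob_{\mathfrak{p}}(a_n^{+})-a_{pn}^{+})=\big[\mathscr{C}_p\big(Y(z)^{p\nu n}\big((1+pW(z))^{\nu n}-1\big)\big)\big]_n$. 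Since $\binom{\nu n}{k}p^{k}$ is divisible by $p^{\ord_p(\nu n)+1}$ for every $k\ge 1$ (using $\nu n\in\mathbb{Z}_p$), the right-hand side lies in $p^{\ord_p(\nu n)+1}\mathcal{O}_{\mathfrak{p}}$, and dividing by $\nu$ yields $\frob_{\mathfrak{p}}(a_n^{+})-a_{pn}^{+}\equiv 0 \bmod p^{\ord_p(n)+1}\mathcal{O}_{\mathfrak{p}}$ — precisely the $1$-function property for $V^{(+,\nu)}$ (and, when $\nu$ is a $\mathfrak{p}$-adic unit, also $a_n^{+}\in\mathcal{O}_{\mathfrak{p}}$).

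The step I expect to be the genuine obstacle is the passage from this $1$-integrality to the asserted $2$-integrality $\bmod p^{2r}$: the first-order Dwork relation above does \emph{not} distinguish $1$-functions from $2$-functions (already for the $2$-function $V=z/(1-z)$ one computes $W\not\equiv 0\bmod p$), so one must genuinely use that $V$ is a $2$-function and not merely a $1$-function — the natural way being to bring in the Lambert/instanton expansion $\sum a_n z^n=\sum b_n n^2 z^n/(1-z^n)$ with $(b_n)$ integral at unramified primes and to carry the binomial expansion of $(1+pW)^{\nu n}$ to second order, where the extra factor of $p$ materializes; similarly, the integrality $a_n^{+}\in\mathcal{O}_{\mathfrak{p}}$ when $p\mid\nu$ requires a further Dwork-type cancellation. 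Both points are carried out in \cite{walcher16}, which I would cite rather than reproduce. Finally, once each $\Phi^{\pm}(\nu,-)$ with $\nu$ admissible is known to be an endomorphism of $\mathcal{S}^2(K|\mathbb{Q})$, resp.\ $\mathcal{S}^2(K|\mathbb{Q})_{\{2\}}$, its formal inverse $\Phi^{\pm}(-\nu,-)$ is one too, so $\Phi^{\pm}(\nu,-)$ is a bijection of the relevant set and the composition law of \Cref{prop: framing operators and properties}~$(i)$ restricts to give the claimed group actions of $(\mathbb{Z},+)$ and $(\mathbb{Z}[D^{-1}],+)$.
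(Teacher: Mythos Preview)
Your proposal is correct and takes essentially the same approach as the paper: both defer the substantive $2$-integrality argument to \cite{walcher16}, and both invoke \Cref{prop: framing operators and properties}~$(i)$ for the group-action structure. The paper's proof is in fact even terser than yours---it simply cites \cite{walcher16} and then remarks that the result also follows as the special case $\sigma=\rho=1$ of \Cref{thm: 2-integrality of fractional framing}, which \emph{is} proved in full later in the paper; so the paper does offer an internal self-contained route, just not at this point in the text. Your additional convention-matching discussion and the Dwork-based sketch (which, as you correctly flag, only reaches $1$-integrality) are accurate and add useful context, though your explanation of why $\Phi^{+}$ admits $\nu\in\mathbb{Z}[D^{-1}]$ is slightly off: the enlargement is not really an ``artefact of the twist'' but rather that the proof for $\Phi^{+}$ only needs $\nu$ to be a $\mathfrak{p}$-adic unit at each unramified $\mathfrak{p}$, whereas $\Phi^{-}$ genuinely requires $\nu\in\mathbb{Z}$ for $(-1)^{\nu}$ to be defined.
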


\begin{proof}
The proof is due to the work of A. Schwarz, V. Vologodsky and J. Walcher in \cite{walcher16}.
An analogue statement for \textit{fractional framing} is given by \Cref{thm: 2-integrality of fractional framing}, from which \Cref{integrality of framing} follows by setting $\sigma= \rho=1$ therein.
\end{proof}

Note, that $\Phi^-$ satisfies the local $2$-function property even at $p=2$ due to the sign convention, which is not preserved by $\Phi^+$.
However, $\Phi^-$ does not seem to preserve 3-integrality at $p=2$ even for $V \in \mathcal{S}^2_{\mathrm{rat}}(K|\mathbb{Q})$.
Recall, that the coefficients of such a rational $V \in \mathcal{S}^2$  are periodic, as a consequence of \cite[Thm1.2]{mue20}, i.e. there is a (minimal) number $N\in \mathbb{N}$, called \textit{periodicity of $V$}, such that $[V(z)]_n = [V(z)]_{N+m}$ for all $n\in \mathbb{N}$.
Furthermore, 3-integrality also fails for $p=3$ by a $3$-order of $1$ and for all primes $p$ that ramify in $K|\mathbb{Q}$ and which divide the periodicity of $V$.
There are several reasons listed here:
\begin{enumerate}
\item
For a given rational function $V \in \mathcal{S}_\mathrm{rat}^2(K|\mathbb{Q})$ let $N$ denote the periodicity of $V$ and let $S$ be the set of primes dividing $N$.
As an implicit corollary to \cite[Thm. 1.2]{mue20} we obtain that $V $ is also an element in $\mathcal{S}^\infty (K|\mathbb{Q})_S$.
Therefore, for an unramified prime in $K|\mathbb{Q}$ which does not divide $N$ we have the equality $\frob_\mathfrak{p} \left( [V(z)]_n \right) =   [V(z)]_{pn} $, while generally, $\frob_\mathfrak{q}\left( [V(z)]_n \right) \neq [V(z)]_{qn}$ for all primes $q\mid N$, $\mathfrak{q}\mid (q)$ in $\mathcal{O}$.
\item
The Wolstenholme type congruences \Cref{lemma: general wolstenholme} does only permit weaker $p$-adic estimations for $p=2,3$, than for $p\geq 5$.
Also, it depends on a periodic sequence $(a_n)_{n\in\mathbb{N}} \in K^\mathbb{N}$ of periodicity, say, $N\in \mathbb{N}$ (effectively, this is the same $N$ as above and $a_n = [V(z)]_n$).
Because of that, these congruences are additionally weaker for those $p$ dividing $N$ by a $p$-order of $\max\{\ord_2(N), \ord_2(N+2)\}$, $\ord_p(N)$ if $p$ equals to $2$, or greater than $2$, respectively.
\item
The $p$-adic approximation of $e^p$ up to the $p$-power of $3$ gives an additional summand for the primes $2$ and $3$.
\begin{align*}
e^p\equiv\begin{cases}
1+ p + \frac{p^2}2 \mod p^3, & \text{for $p\geq 5$},\\
1+p + \frac{p^2}2 + \frac{p^3}6 \mod p^3, & \text{for $p\in \{2,3\}$}.
\end{cases}
\end{align*}
Since $\Phi^{+/-}$ are implicitely defined by concatenation with the exponential power series $\exp$, illustrated by the functional equations \cref{eq: framing op phi+} and \cref{eq: framing op phi-}, this contributes to the failure of the $3$-itegrality at $p\in \{2,3\}$.
\end{enumerate}
By \Cref{integrality of framing}, the expression
\begin{align}\label{eq: p-adic integer estimation}
\frac2{p^2n^2}\cdot \left( \frob_\mathfrak{p}(a^+_n)- a^+_{pn} \right)
\end{align}
is a $\mathfrak{p}$-adic integer for all $n\in \mathbb{N}$ and all unramified primes $p$, even for $p=2$.
The following lemma gives further estimations of \eqref{eq: p-adic integer estimation}.

\begin{lemma}\label{3-function characterization}
Let $V \in \mathcal{S}^3(K|\mathbb{Q})$ and $\nu \in \mathbb{Z}[D^{-1}]$.
Denote by $a_n = [ V (z) ]_n$ and $a^+_n=\left[ V^{(+, \nu )}(z)\right]_n$ the $n$-th coefficient of $V(z)$ and $V^{(+,\nu)}(z)$, respectively.
Then we have for all (unramified) primes $p$ and prime ideals $\mathfrak{p}$ dividing $(p)$ and for all $n \in \mathbb{N}$ -- except for the case where $p=2$ and $\ord_2(n)=0$ -- the congruence
\begin{align}\label{eq: 3-function condition}
\frac2{p^2n^2}\cdot \left( \frob_\mathfrak{p}(a^+_n)- a^+_{pn} \right) &\equiv \nu \left[ \delta \left( \frob_p V(z^p) + V(z) \right) \cdot \left( \frac{\exp( \nu \smallint V(z) )} z \right)^{pn} \times \right. \nonumber\\
&\left. \hspace{2em} \times \vphantom{\left( \frac{\exp( \nu \smallint V(z) )} z \right)^{pn}} \smallint \hspace{-0.25em}\,^3 \left( \frob_p V(z^p) - V(z)\right) \right]_0 \hspace{-1em}\mod p^{ \ord_p (pn) - \delta_{3,p}} \mathcal{O}_\mathfrak{p}.
\end{align}
Note, that for the exceptional case $p=2$ and $\ord_2(n)$, by \Cref{integrality of framing} we only have $\frob_2\left( a^+_n \right) - a_{2n}^+ \equiv 0 \mod 2\mathcal{O}_{(2)}$, which is already in accordance with the the estimation given in \Cref{integrality of framing}.
\end{lemma}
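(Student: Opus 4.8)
The plan is to turn both $\frob_\mathfrak{p}(a_n^+)$ and $a_{pn}^+$ into residues of Laurent series via \Cref{prop: framing operators and properties}~$(iii)$, subtract, and then push the resulting expression through a double integration by parts, keeping careful track of $\mathfrak{p}$-adic orders. Throughout write $W^{\pm} := \frob_p V(z^p) \pm V(z)$. \emph{Step 1.} By \Cref{prop: framing operators and properties}~$(iii)$ one has $\nu a_n^+ = [\exp(\nu n\smallint V(z))]_n$, and since $\frob_\mathfrak{p}$ commutes with $[-]_n$, $\smallint$, $\exp$ and fixes $\nu\in\mathbb{Q}$, also $\nu\,\frob_\mathfrak{p}(a_n^+) = [\exp(\nu n\smallint(\frob_p V)(z))]_n$. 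Applying $[g(z)]_n = [g(z^p)]_{pn}$ and the identity $(\smallint f)(z^p) = p\smallint(f(z^p))$ gives $\nu\,\frob_\mathfrak{p}(a_n^+) = [\exp(A)\,z^{-pn}]_0$ with $A := \nu pn\smallint(\frob_p V(z^p))$, while directly $\nu a_{pn}^+ = [\exp(B)\,z^{-pn}]_0$ with $B := \nu pn\smallint V(z)$. Since $A - B = \nu pn\smallint W^{-}$ and $\exp(A) = \exp(B)\exp(A-B)$,
\begin{align*}
\nu\bigl(\frob_\mathfrak{p}(a_n^+) - a_{pn}^+\bigr) = \bigl[\exp(B)\bigl(\exp(A-B) - 1\bigr) z^{-pn}\bigr]_0 .
\end{align*}

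\emph{Step 2 (integrality inputs and truncation).} As $V \in \mathcal{S}^3 \subset \mathcal{S}^1$, the series $\nu pn\,V$ is again a $1$-function, hence $\exp(B) \in 1 + z\mathcal{O}_\mathfrak{p}\llbracket z\rrbracket$ by Dwork's \Cref{characterization of 1-functions}. The $k$-th coefficient of $W^{-}$ lies in $p^{3\ord_p(k)}\mathcal{O}_\mathfrak{p}$ (it equals $-a_k$ for $p\nmid k$ and the $3$-sequence difference $\frob_\mathfrak{p}(a_{k/p}) - a_k$ for $p\mid k$), so $\smallint W^{-}$, $\smallint^2 W^{-}$, $\smallint^3 W^{-}$ and $\delta W^{\pm}$ all lie in $z\mathcal{O}_\mathfrak{p}\llbracket z\rrbracket$. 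Expanding $\exp(A-B) - 1 = \sum_{j\ge 1}\tfrac1{j!}(A-B)^j$ — a finite sum in each fixed coefficient — and using that $A-B$ has all coefficients of $p$-order $\ge \ord_p(pn)$ together with $\ord_p(j!) \le \tfrac{j-1}{p-1}$ and $\ord_p(pn)\ge 1$ (and $\ge 2$ for $p=2$, which is exactly what the excluded hypothesis $\ord_2(n)=0$ buys us), one finds the summands with $j\ge 3$ are $\equiv 0 \bmod p^{3\ord_p(pn)-\delta_{2,p}-\delta_{3,p}}$. Hence, modulo that power of $p$,
\begin{align*}
\nu\bigl(\frob_\mathfrak{p}(a_n^+) - a_{pn}^+\bigr) \equiv \bigl[\exp(B)(A-B) z^{-pn}\bigr]_0 + \tfrac12\bigl[\exp(B)(A-B)^2 z^{-pn}\bigr]_0 .
\end{align*}

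\emph{Step 3 (double integration by parts).} Write $A-B = \nu pn\,\delta^2(\smallint^3 W^{-})$; then applying \cref{intbyparts} twice, in the form $[G\,\delta^2 F]_0 = [F\,\delta^2 G]_0$ with $F = \smallint^3 W^{-}$, converts both residues above into residues involving $\delta^2(\exp(B)z^{-pn})$ and $\delta^2(\exp(B)(\smallint W^{-})z^{-pn})$. Using $\delta(\exp(B)z^{-pn}) = pn(\nu V-1)\exp(B)z^{-pn}$ and $\delta(A-B) = \nu pn\,W^{-}$, these two second derivatives equal $(\exp(B)z^{-pn})$ times, respectively, $pn(\nu\,\delta V + pn(\nu V-1)^2)$ and $\delta W^{-} + 2pn(\nu V-1)W^{-} + pn\nu(\delta V)\smallint W^{-} + (pn)^2(\nu V-1)^2\smallint W^{-}$. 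Every summand other than $\nu pn\,\delta V$ in the first and $\delta W^{-}$ in the second carries an extra factor of $p$-order $\ge\ord_p(pn)$, so once the prefactors $\nu p^2 n^2$ and $\tfrac12\nu^2 p^2 n^2$ are restored these all vanish modulo $p^{3\ord_p(pn)-\delta_{2,p}-\delta_{3,p}}$; what survives is
\begin{align*}
\nu\bigl(\frob_\mathfrak{p}(a_n^+) - a_{pn}^+\bigr) \equiv \nu^2 p^2 n^2\Bigl[\bigl(\delta V + \tfrac12\delta W^{-}\bigr)(\smallint^3 W^{-})\exp(B)z^{-pn}\Bigr]_0 \bmod p^{3\ord_p(pn)-\delta_{2,p}-\delta_{3,p}} ,
\end{align*}
and $\delta V + \tfrac12\delta W^{-} = \tfrac12\delta W^{+}$. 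Dividing by $\tfrac12\nu p^2 n^2$ (of $p$-order $2\ord_p(pn)-\delta_{2,p}$; the contribution $\ord_p(\nu)$ cancels because every discarded term also carried it) and noting $\exp(B)z^{-pn} = (\exp(\nu\smallint V(z))/z)^{pn}$ gives exactly \cref{eq: 3-function condition}; the claim in the exceptional case $p=2$, $\ord_2(n)=0$ is the bound already provided by \Cref{integrality of framing}.

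The main obstacle is Step 3: after the two integrations by parts and the restoration of the prefactors $\nu p^2 n^2$ and $\tfrac12\nu^2 p^2 n^2$, one must verify that precisely the $\delta W^{+}$-term survives while the several remaining terms all lie just beyond the modulus $p^{3\ord_p(pn)-\delta_{2,p}-\delta_{3,p}}$. This is a genuine boundary estimate — the corrections $\delta_{2,p},\delta_{3,p}$ are exactly the slack consumed by the $\mathfrak{p}$-orders of small factorials and of the $e^p$-type approximations behind $\Phi^{+/-}$ — and it is also why the case $p=2$, $\ord_2(n)=0$ has to be set aside.
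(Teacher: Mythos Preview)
Your proof is correct and follows essentially the same route as the paper's: express $\frob_\mathfrak{p}(a_n^+)-a_{pn}^+$ as the residue $[\exp(B)(\exp(A-B)-1)z^{-pn}]_0$, truncate the exponential series to the first two terms using the $p$-adic estimate on $(pn)^j/j!$, then apply integration by parts twice to replace $\smallint W^-$ by $\smallint^3 W^-$ at the cost of differentiating the remaining factor, and finally identify the surviving contribution as $\tfrac12\delta W^+$. The paper organizes the computation slightly differently (it separates the cases $p\ge 5$ and $p\in\{2,3\}$ and writes out the two residues \eqref{eq: framed 3-funct}, \eqref{eq: framed 3-funct p.2} individually rather than folding the corrections into a single $\delta_{2,p}+\delta_{3,p}$), but the substance is identical. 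Your parenthetical remark about $\ord_p(\nu)$ cancelling is correct in content, though it would be clearer to pull the $1/\nu$ out of $a_n^+=\tfrac1\nu[\,\cdots\,]_0$ at the very start, so that the truncated expansion reads $\sum_{j\ge 1}\tfrac{\nu^{j-1}(pn)^j}{j!}(\smallint W^-)^j$ and no division by $\nu$ is needed at the end.
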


\begin{proof}
We will consequently exclude the case $p=2$ and $\ord_2(n)=0$ in the following without necessarily mentioning it.
Let $p$ be an unramified prime in $K$.
As in the proof of \Cref{integrality of framing} we will write
\begin{align*}
X(z) := \frob_\mathfrak{p} V\left(z^p\right) - V(z).
\end{align*}
Then we obtain
\begin{align*}
\frob_\mathfrak{p} ( a_n^+ ) - a_{pn}^+ &= \frac1\nu \left[ \frac{ \exp( \nu n p \smallint V(z)) }{z^{pn}} \left( \exp \left( \nu n p \smallint X(z) \right) - 1 \right) \right]_0\\
&= \frac1\nu \left[ \frac{ \exp( \nu n p \smallint V(z)) }{z^{pn}} \sum_{k=1}^\infty  \frac{(\nu n p)^k}{k!} (\smallint X(z))^k \right]_0.
\end{align*}
We find for $k\geq 4$ and $p\geq 3$
\begin{align*}
\mathbb{Z} \ni \ord_p \left( \frac{(pn)^k}{k!} \right) &\geq k(\ord_p(n)+1) - \frac{k-1}2 \\
&= k \left( \ord_p (n) + \frac12 \right) + \frac12 \geq 4 \ord_p (n) + \frac52 > 3 (\ord_p(n) + 1 ) - 1.
\end{align*}
And therefore, $\ord_p\left( \frac{(pn)^k}{k!} \right) \geq 3 \ord_p(pn)$.
For $p=2$ we assume $\ord_2(n)\geq 1$, then for $k\geq 4$
\begin{align*}
\ord_2\left( \frac{(2n)^k}{k!} \right) = k \ord_2(n) + S_2(k)\geq 3 \ord_2(n) +2 = 3(\ord_2(n)+1)-1.
\end{align*}
For $k=3$ we still have
\begin{align*}
\ord_p\left(\frac{(pn)^3}{3!}\right) =
\begin{cases}
3 \ord_p(pn),& \text{if $p\geq 5$},\\
3 \ord_p(pn)-1,&\text{if $p \in \{ 2 , 3 \}$}.
\end{cases}
\end{align*}
Therefore, we obtain for $p\geq 5$,
\begin{align}\label{eq: framed 3-function p>3}
\frob_\mathfrak{p} ( a_n^+ ) - a_{pn}^+ &\equiv
np\left[ \frac{ \exp( \nu n p \smallint V(z)) }{z^{pn}} \right. \times \nonumber\\
&\hspace{0.5em} \times  \left. \vphantom{\frac{ \exp( \nu n p \smallint V(z)) }{z^{pn}}}\left( \smallint X(z) + \frac{\nu np}2 (\smallint X(z))^2\right) \right]_0\mod p^{3\ord_p(pn) }\mathcal{O}_\mathfrak{p},
\end{align}
and for $p\in \{2,3\}$, (again, except for the case where $p=2$ and $\ord_2(n)=0$)
\begin{align}\label{eq: framed 3-function p<5}
\frob_\mathfrak{p} ( a_n^+ ) - a_{pn}^+ &\equiv \left[ \frac{ \exp( \nu n p \smallint V(z)) }{z^{pn}} \times \right. \nonumber\\
&\hspace{1em}\left. \times \left( np \smallint X(z) + \frac\nu2 (np)^2 ( \smallint X(z) )^2 \vphantom{\frac88}\right) \right]_0 \mod p^{3\ord_p(pn)-1} \mathcal{O}_\mathfrak{p}.
\end{align}
We will compute the expressions given in \eqref{eq: framed 3-funct} and \eqref{eq: framed 3-funct p.2} separately.
\begin{align} \label{eq: framed 3-funct}
\left[ \frac{\exp ( \nu n p \smallint V(z) )}{z^{pn}} \smallint X(z)\right]_0 &\mod p^{2\ord_p(pn)}\mathcal O_\mathfrak{p}, &\text{for all primes $p$},\\
\left[ \frac{ \exp ( \nu n p \smallint V(z) )}{z^{pn}} (\smallint X(z))^2 \right]_0 &\mod p^{\ord_p(pn)} \mathcal O_\mathfrak{p}, &\text{for all primes $p$}.\label{eq: framed 3-funct p.2}
\end{align}
In the following, we will write $F(z)= \frac{\exp(\nu np \smallint V(z))}{z^{pn}}$.
We have for all primes $p$
\begin{align*}
\delta^2F(z) &= \delta^2\left(z^{-pn}\exp(\nu p n \smallint V(z))\right)\\
&=\delta\left(-pn z^{-pn} \exp(\nu p n \smallint V(z))+\nu p n V(z) z^{-pn} \exp(\nu p n \smallint V(z)) \right)\\
&= pn\cdot \delta\left((\nu  V(z)-1)F(z)\right)\\
&= pn \nu \cdot \delta V(z) \cdot F(z) + pn(\nu V(z)-1) \cdot \delta F(z)\\
&= pn \nu \cdot \delta V(z) \cdot F(z) + (pn)^2 (\nu V(z)-1)^2 F(z)\\
&\equiv pn\nu \cdot \delta V(z) \cdot F(z) \mod p^{2\ord_p( p n )} \mathcal{O}_\mathfrak{p}\llbracket z \rrbracket.
\end{align*}
Therefore, by using the fact that $\smallint\hspace{-0.25em}\,^3 X(z) \in z \mathcal{O}_\mathfrak{p} \llbracket z \rrbracket$ (for all $p$, which is equivalent to saying $V\in \mathcal{S}^3(K|\mathbb{Q})$), partial integration (see \cref{intbyparts}) applied to \eqref{eq: framed 3-funct} gives us
\begin{align*}
\left[F(z) \cdot \smallint X(z)\right]_0
&=\left[\delta ^2F(z) \cdot  \smallint\hspace{-0.25em}\,^3 X(z)\right]_0\\
&\equiv pn\nu  \left[\delta V(z) \cdot F(z) \cdot \smallint\hspace{-0.25em}\,^3 X(z)
\right]_0 \mod p^{2\ord_p(pn)}\mathcal{O}_\mathfrak{p}.
\end{align*}
Furthermore, \eqref{eq: framed 3-funct p.2} for $p>2$ becomes
\begin{align*}
\left[F(z) (\smallint X(z))^2\right]_0
&=\left[ \delta^2 (F(z) \cdot \smallint X(z)) \cdot \smallint \hspace{-0.25em}\,^3 X(z)\right]_0\\
&\hspace{-4em}=\left[ (\delta^2 F(z) \cdot \smallint X(z) + 2 \cdot \delta F(z) \cdot X(z) + F(z) \cdot \delta X(z)) \cdot \smallint\hspace{-0.25em}\,^3 X(z) \right]_0 \\
&\hspace{-4em}\equiv \left[ (pn\nu\cdot \delta V(z) \cdot \smallint X(z) + \delta X(z) )\cdot F(z) \cdot \smallint\hspace{-0.25em}\,^3 X(z)
\right]_0 \mod p^{\ord_p(pn)}\mathcal{O}_\mathfrak{p}\\
&\hspace{-4em}\equiv
\left[ \delta X(z) \cdot F(z) \cdot \smallint\hspace{-0.25em}\,^3 X(z)
\right]_0 \mod p^{\ord_p(pn)}\mathcal{O}_\mathfrak{p}.
\end{align*}
Therefore, inserting \eqref{eq: framed 3-funct} and \eqref{eq: framed 3-funct p.2}, for $p\geq 5$, into \cref{eq: framed 3-function p>3}, we obtain
\begin{align*}
\frob_\mathfrak{p} ( a_n^+ ) - a_{pn}^+ \hspace{-4em}&\\
&\equiv
\nu (np)^2 \left[ \delta V(z) \cdot F(z) \cdot \smallint\hspace{-0.25em}\,^3 X(z)
\right]_0 + \\
&\hspace{5em} + \frac\nu2(np)^2\left[\delta X(z) \cdot F(z) \cdot \smallint\hspace{-0.25em}\,^3 X(z) \right]_0 \mod p ^{3\ord_p(pn)} \mathcal{O}_\mathfrak{p}\\
&=\frac\nu2 (np)^2 \left[\delta\left( 2V(z) + X(z) \right) \cdot F(z) \cdot \smallint\hspace{-0.25em}\,^3 X(z) \right]_0 \mod p^{3\ord_p(pn)} \mathcal{O}_\mathfrak{p}\\
&=\frac\nu2 (np)^2\left[\delta (\frob_pV(z^p) + V(z)) \cdot F(z) \cdot \smallint\hspace{-0.25em}\,^3 X(z)\right]_0 \mod p^{3\ord_p(pn)} \mathcal{O}_\mathfrak{p},
\end{align*}
which proves \cref{eq: 3-function condition} for $p\geq 5$.
For $p\in \{2,3\}$, \cref{eq: framed 3-function p<5} becomes
\begin{align*}
\frob_\mathfrak{p} ( a_n^+ ) - a_{pn}^+\hspace{-4em}&\\
&\equiv \frac\nu2 (np)^2\left[\delta (\frob_pV(z^p) + V(z)) \cdot F(z) \cdot \smallint\hspace{-0.25em}\,^3 X(z)\right]_0 \mod p^{3\ord_p(pn)-1} \mathcal{O}_\mathfrak{p}.
\end{align*}
As stated in \cref{eq: 3-function condition} for $p\in \{2,3\}$.
\end{proof}

It is very tedious to check whether $V^{(+/-,\nu)}(z)$ satisfies the local $3$-function property for a given prime $p$ by using \cref{eq: 3-function condition} explicitly.
However, for $V\in \bigcap_{s=1}^\infty \mathcal{S}^s(K|\mathbb{Q})= \mathcal{S}^\infty (K|\mathbb{Q})$ we may simplify \cref{eq: 3-function condition}.
This is the statement of the following \Cref{cor:3-function characterization}.

\begin{corollary}\label{cor:3-function characterization}
Let $V(z)\in \mathcal{S}^\infty (K|\mathbb{Q})$ and $\nu \in \mathbb{Z}\left[D^{-1}\right]$.
Denote by $a_n^+$ the $n$-th coefficient of $V^{(+,\nu)}(z)$ for all $n\in \mathbb{N}$.
Then for all (unramified) primes $p$ and all $n\in \mathbb{N}$ we have -- except for the case where $p=2$ and  $\ord_2(n)=0$ -- the congruence
\begin{align*}
\frac2{p^2n^2}\cdot \left( \frob_\mathfrak{p}(a^+_n)- a^+_{pn} \right) &\equiv \\
&\hspace{-10em} \nu \left[ V(z) \cdot \left( \frac{\exp(\nu \smallint V(z))}{z}\right)^{pn} \cdot \smallint\hspace{-0.25em}\,^2 \left(\frob_pV(z^p) - V(z)\right)\right]_0 \mod p^{\ord_p(pn)-\delta_{3,p}} \mathcal{O}_\mathfrak{p}.
\end{align*}
\end{corollary}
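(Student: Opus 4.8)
The plan is to deduce the Corollary from \Cref{3-function characterization}, which applies since $\mathcal{S}^\infty(K|\mathbb{Q})\subseteq\mathcal{S}^3(K|\mathbb{Q})$, by simplifying the bracket appearing in \cref{eq: 3-function condition}. Abbreviate $F=(\exp(\nu\smallint V(z))/z)^{pn}$ and $X(z)=\frob_p V(z^p)-V(z)$, so that \Cref{3-function characterization} reads $\frac{2}{p^2n^2}(\frob_\mathfrak{p}(a^+_n)-a^+_{pn})\equiv\nu[\delta(\frob_p V(z^p)+V(z))\cdot F\cdot\smallint^3 X]_0$ modulo $p^{\ord_p(pn)-\delta_{3,p}}\mathcal{O}_\mathfrak{p}$, with the case $p=2$, $\ord_2(n)=0$ excluded exactly as there. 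The decisive extra input from $V\in\mathcal{S}^\infty$ is purely structural: since $\mathcal{S}^\infty=\bigcap_{s\ge1}\mathcal{S}^s$ and $\mathcal{O}_\mathfrak{p}$ is $\mathfrak{p}$-adically separated, the congruences $\frob_\mathfrak{p}(a_j)\equiv a_{pj}\bmod p^s\mathcal{O}_\mathfrak{p}$, valid for all $s$, force $\frob_\mathfrak{p}(a_j)=a_{pj}$ for every $j$. Hence $X=-\sum_{p\nmid k}a_k z^k$ is supported on indices prime to $p$, and therefore so are $\smallint^2 X$ and $\smallint^3 X$ (in particular $\smallint^3 X\in z\mathcal{O}_\mathfrak{p}\llbracket z\rrbracket$), while $\frob_p V(z^p)=\sum_j a_{pj}z^{pj}$ is supported on multiples of $p$.

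Next I would integrate by parts with \cref{intbyparts}. Using $\delta\smallint^3 X=\smallint^2 X$ and $\delta F=pn(\nu V(z)-1)F$ one obtains $[\delta(\frob_p V(z^p)+V)\cdot F\cdot\smallint^3 X]_0=-[(\frob_p V(z^p)+V)\cdot\delta F\cdot\smallint^3 X]_0-[(\frob_p V(z^p)+V)\cdot F\cdot\smallint^2 X]_0$. The first summand acquires the factor $pn$ while all other factors have $\mathfrak{p}$-integral coefficients, so it lies in $p^{\ord_p(pn)}\mathcal{O}_\mathfrak{p}$ and may be dropped. In the second summand I split along $\frob_p V(z^p)+V$: the $V$-piece is, up to sign, exactly $\nu[V\cdot F\cdot\smallint^2(\frob_p V(z^p)-V)]_0$, the right-hand side of the Corollary, so everything reduces to the claim $[\frob_p V(z^p)\cdot F\cdot\smallint^2 X]_0\equiv0\bmod p^{\ord_p(pn)}\mathcal{O}_\mathfrak{p}$.

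This last vanishing is exactly what the support observation provides. The product $\frob_p V(z^p)\cdot\smallint^2 X$ is supported on indices prime to $p$ (a multiple of $p$ plus a $p$-unit is a $p$-unit), so extracting $[\cdot]_0$ against $F=z^{-pn}\exp(\smallint V)^{pn\nu}$ amounts to pairing those coefficients with coefficients of $\exp(\smallint V)^{pn\nu}$ at indices of the form $pn-\ell$, which are again prime to $p$. Now $\exp(\smallint V)\in 1+z\mathcal{O}_\mathfrak{p}\llbracket z\rrbracket$ by \Cref{characterization of 1-functions}, and $pn\nu\in\mathcal{O}_\mathfrak{p}$ because $p\nmid D$; feeding the Mahler bound $\binom{pn\nu}{k}\in\mathcal{O}_\mathfrak{p}$ — hence $\ord_p\binom{pn\nu}{k}\ge\ord_p(pn\nu)-\ord_p(k)$ — into the argument of \Cref{lemma: final countdown} shows that its conclusion persists for the $\mathcal{O}_\mathfrak{p}$-power $Y^{pn\nu}$, $Y=\exp(\smallint V)$, namely $[Y^{pn\nu}]_m\equiv0\bmod p^{\max\{0,\ord_p(pn\nu)-\ord_p(m)\}}\mathcal{O}_\mathfrak{p}$, which for $p\nmid m$ is divisibility by $p^{\ord_p(pn)}\mathcal{O}_\mathfrak{p}$. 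Since the remaining factors are $\mathfrak{p}$-integral, the whole sum vanishes modulo $p^{\ord_p(pn)}\mathcal{O}_\mathfrak{p}$, which completes the reduction.

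The step I expect to demand the most care is the last paragraph: one must confirm that the proof of \Cref{lemma: final countdown}, stated for positive-integer exponents and resting on $\binom{n-1}{k-1}\in\mathbb{N}_0$, genuinely survives the passage to the $\mathfrak{p}$-integral exponent $pn\nu$; and one must keep the excluded case $p=2$, $\ord_2(n)=0$ and the $\delta_{3,p}$-shift at $p\in\{2,3\}$ under control — the latter being innocuous, since every term discarded above is divisible by the full $p^{\ord_p(pn)}\ge p^{\ord_p(pn)-\delta_{3,p}}$. The integrations by parts and the combinatorics of supports are otherwise routine.
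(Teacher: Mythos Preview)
Your argument is correct but follows a genuinely different route from the paper's. The paper integrates by parts $\ord_p(n)$ times: writing $\smallint^3 X=\delta^{\ord_p(n)}\smallint^{\ord_p(n)+3}X$ (which is legitimate precisely because $V\in\mathcal{S}^\infty$ makes every $\smallint^s X$ $\mathfrak{p}$-integral), it transfers all the $\delta$'s onto $\delta(\frob_p V(z^p)+V)\cdot F$ via \cref{intbyparts}, uses $\delta F\equiv 0\bmod p^{\ord_p(pn)}$ under Leibniz to ignore derivatives hitting $F$, and kills the $\frob_p V(z^p)$-contribution by observing that $\delta^{\ord_p(n)+1}$ applied to a series supported on $p\mathbb{Z}$ is automatically divisible by $p^{\ord_p(n)+1}$; one more round of integration by parts then collapses $\delta^{\ord_p(pn)}(V\cdot F)$ against $\smallint^{\ord_p(n)+3}X$ to the claimed $[V\cdot F\cdot\smallint^2 X]_0$. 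You instead integrate by parts a single time, isolate the cross term $[\frob_p V(z^p)\cdot F\cdot\smallint^2 X]_0$, and annihilate it coefficientwise via the support dichotomy together with an extension of \Cref{lemma: final countdown} to the $\mathfrak{p}$-integral exponent $pn\nu\in\mathbb{Z}_p$. Both strategies exploit the same structural fact (indices in $p\mathbb{Z}$ versus $p$-units), but the paper encodes it in the operator $\delta^{\ord_p(n)+1}$ while you encode it in the coefficient bound $[Y^{pn\nu}]_m\equiv 0\bmod p^{\ord_p(pn)}$ for $p\nmid m$. Your extension of \Cref{lemma: final countdown} is sound: the only place positivity of the exponent is used in that proof is $\binom{n-1}{k-1}\in\mathcal{O}_\mathfrak{p}$, which for $n\in\mathbb{Z}_p$ is exactly the Mahler integrality you invoke, and the combinatorial estimate on $\frac{k!}{m!}B_{m,k}(!y)$ is independent of the exponent. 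The paper's route is more self-contained (no auxiliary extension needed), while yours is shorter and makes the cancellation visible at the level of individual coefficients; note also that your ``up to sign'' matches the sign the paper actually obtains in its proof and in \cref{eq: crucial reduction executed}, so the sign in the displayed statement of the Corollary appears to be a typo.
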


\begin{proof}
Let $p$ be an unramfied prime in $K|\mathbb{Q}$ and fix $n \in \mathbb{N}$.
As in the proof of \Cref{integrality of framing} we will write 
\begin{align*}
X(z) := \frob_\mathfrak{p} V\left(z^p\right) - V(z) \quad \text{and}\quad
F(z):= z^{-pn}\exp(\nu pn \smallint V(z)).
\end{align*}
By assumption, $\smallint\hspace{-0.25em}^s X(z)\in z\mathcal{O}_p\llbracket z \rrbracket$ for all $s \in \mathbb{N}$.
Equivalently, $[X(z)]_{pn}=0$ for all $n\in \mathbb{N}$.
Let $s=\ord_p(n)+3$. Then
\begin{align*}
\left[ \delta \left( \frob_\mathfrak{p} V ( z^p ) + V(z) \right) \cdot F(z) \cdot
\delta^{\ord_p(n)}\smallint\hspace{-0.25em}\,^{\ord_p(n)+3} X(z) \right]_0\\
&\hspace{-21em}=(-1)^{\ord_p(n)}\left[ \delta^{\ord_p(n)} \left( \delta\left( \frob_\mathfrak{p} V(z^p) + V(z) \right)\cdot F(z) \right)\cdot\smallint\hspace{-0.25em}\,^{\ord_p(n)+3} X(z)
\right]_0.
\end{align*}
Note that $\delta F(z)\equiv 0\mod p^{\ord_p(pn)}$, therefore
\begin{align*}
\delta^{\ord_p(n)}\left( 
\delta \left( \frob_\mathfrak{p} V(z^p) + V(z) \right) \cdot F(z) \right) &\\
&\hspace{-10em}\equiv\delta^{\ord_p(n)+1} \left( \frob_pV(z^p)+V(z)\right)\cdot F(z) \mod p^{\ord_p(pn)} z\mathcal{O}_\mathfrak{p}\llbracket z \rrbracket\\
&\hspace{-10em} \equiv \delta^{\ord_p(pn)} V(z) \cdot F(z) \mod p^{\ord_p(n)+1} z\mathcal{O}_\mathfrak{p}\llbracket z \rrbracket\\
&\hspace{-10em}\equiv \delta^{\ord_p(pn)} (V(z) F(z)) \mod p^{\ord_p(n)+1} z\mathcal{O}_\mathfrak{p}\llbracket z \rrbracket.
\end{align*}
Therefore, by \Cref{3-function characterization}, we have
\begin{align*}
-\frac2{p^2n^2} \cdot \left( \frob_\mathfrak{p}(a_n^+) - a_{pn}^+ \right) \equiv \nu \left[ V(z) \cdot F(z) \cdot \smallint\hspace{-0.25em}\,^2 X(z)\right]_0
\mod p^{\ord_p(pn) - \delta_{3,p}} \mathcal{O}_\mathfrak{p},
\end{align*}
as stated.
\end{proof}

\begin{lemma}\label{lemma: a trivial lemma}
For all $V\in \mathcal{S}^1(K|\mathbb{Q})$ and $r\in \mathbb{N}$ and unramified primes $p$ in $K|\mathbb{Q}$ we have
\begin{align*}
\exp(p^r \smallint (\frob_\mathfrak{p} V (z^p) - V(z))) \in 1 + p^{r}z \mathcal{O}_\mathfrak{p} \llbracket z \rrbracket.
\end{align*}
\end{lemma}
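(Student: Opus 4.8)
The plan is to put the left-hand side into closed form and then quote Dwork's Integrality Lemma (\Cref{characterization of 1-functions}). Writing $V=\sum_{n\geq 1}a_nz^n$, the first step is the elementary identity
\begin{align*}
\smallint\!\bigl(\frob_\mathfrak{p}V(z^p)\bigr)=\tfrac1p\,\frob_\mathfrak{p}\!\bigl((\smallint V)(z^p)\bigr),
\end{align*}
which holds because $\frob_\mathfrak{p}$ fixes $z$ while $\smallint$ divides the coefficient of $z^{pn}$ by $pn$. Hence
\begin{align*}
p^{r}\,\smallint\!\bigl(\frob_\mathfrak{p}V(z^p)-V(z)\bigr)=p^{r-1}\,\frob_\mathfrak{p}\!\bigl((\smallint V)(z^p)\bigr)-p^{r}\,\smallint V(z).
\end{align*}

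Next I would set $Y:=\exp(\smallint V)$ and use that $\frob_\mathfrak{p}$, being a $\mathbb{Q}$-algebra endomorphism of $K_\mathfrak{p}\llbracket z\rrbracket$, commutes with the formal exponential. Exponentiating the displayed identity then gives
\begin{align*}
\exp\!\bigl(p^{r}\smallint(\frob_\mathfrak{p}V(z^p)-V(z))\bigr)=\frac{\bigl(\frob_\mathfrak{p}(Y)(z^p)\bigr)^{p^{r-1}}}{Y(z)^{p^{r}}}=\left(\frac{\frob_\mathfrak{p}(Y)(z^p)}{Y(z)^{p}}\right)^{p^{r-1}}.
\end{align*}
Since $V\in\mathcal{S}^1(K|\mathbb{Q})$ and $p$ is unramified (so $D^{-1}\in\mathcal{O}_\mathfrak{p}$ and $Y\in 1+z\mathcal{O}_\mathfrak{p}\llbracket z\rrbracket$ is a unit), \Cref{characterization of 1-functions}~$(iii)$ applies and yields $\frac{\frob_\mathfrak{p}(Y)(z^p)}{Y(z)^{p}}=1+pG(z)$ for some $G\in z\mathcal{O}_\mathfrak{p}\llbracket z\rrbracket$.

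It then remains to check that $(1+pG)^{p^{r-1}}\in 1+p^{r}z\mathcal{O}_\mathfrak{p}\llbracket z\rrbracket$. Expanding with the binomial theorem, the $k$-th term for $k\geq 1$ is $\binom{p^{r-1}}{k}p^{k}G^{k}$; from $\binom{p^{r-1}}{k}=\frac{p^{r-1}}{k}\binom{p^{r-1}-1}{k-1}$ together with the fact that $\binom{p^{r-1}-1}{k-1}$ is a $p$-adic unit one gets $\ord_p\binom{p^{r-1}}{k}=(r-1)-\ord_p(k)$, so the scalar has $p$-valuation $(r-1)+\bigl(k-\ord_p(k)\bigr)\geq r$ (using $\ord_p(k)\leq k-1$ for all $k\geq 1$), while $G^{k}\in z\mathcal{O}_\mathfrak{p}\llbracket z\rrbracket$; summing over $k\geq 1$ finishes the argument. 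I expect the only point needing care to be the bookkeeping around the substitution $z\mapsto z^{p}$ and the interchange of $\frob_\mathfrak{p}$ with $\exp$; the remaining valuation estimates are routine.
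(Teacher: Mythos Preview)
Your argument is correct but takes a different route from the paper's. The paper observes directly that $V\in\mathcal{S}^1(K|\mathbb{Q})$ forces $\smallint X(z)\in z\mathcal{O}_\mathfrak{p}\llbracket z\rrbracket$ (where $X(z)=\frob_\mathfrak{p}V(z^p)-V(z)$), and then simply expands $\exp(p^r\widetilde X)=1+\sum_{k\ge 1}\frac{p^{rk}}{k!}\widetilde X^k$ for an arbitrary $\widetilde X\in z\mathcal{O}_\mathfrak{p}\llbracket z\rrbracket$, estimating $\ord_p\bigl(\frac{p^{rk}}{k!}\bigr)=rk-\frac{k-S_p(k)}{p-1}\ge (r-1)k+1\ge r$ via Legendre's formula. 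Your approach instead rewrites the exponential as the $p^{r-1}$-th power of the Dwork quotient $\frac{\frob_\mathfrak{p}(Y)(z^p)}{Y(z)^p}$ and invokes \Cref{characterization of 1-functions}\,$(iii)$ to reduce to a binomial estimate. Both are short; the paper's version is self-contained and avoids the detour through Dwork's Lemma, while yours makes the connection to \Cref{characterization of 1-functions} explicit and cleanly isolates the passage from $r=1$ to general $r$. One small remark: for your binomial step you only need $\ord_p\binom{p^{r-1}}{k}\ge (r-1)-\ord_p(k)$, which already follows from $\binom{p^{r-1}}{k}=\frac{p^{r-1}}{k}\binom{p^{r-1}-1}{k-1}\in\mathbb{Z}$; the stronger claim that $\binom{p^{r-1}-1}{k-1}$ is a $p$-adic unit is true (e.g.\ by Lucas) but not needed. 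Also note that in the paper's statement of \Cref{characterization of 1-functions} the relation is $Y=\exp(V)$, so strictly speaking you are applying it with $\smallint V$ in the role of that theorem's ``$V$''; this is exactly how the paper uses it elsewhere (cf.\ the proof of \Cref{lemma: final countdown}), so there is no issue.
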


\begin{proof}
Write $X(z)= \frob_\mathfrak{p}V(z^p)- V(z)$. Since $V \in \mathcal{S}^1(K|\mathbb{Q})$ we have $\smallint X(z) \in z\mathcal{O}_p\llbracket z \rrbracket$ for all unramified primes $p$ in $K|\mathbb{Q}$.
In particular, the statement follows if
\begin{align*}
	\exp(p^r \widetilde X(z)) \in 1 + p^r z  \mathcal{O}_p\llbracket z \rrbracket
\end{align*}
for any $\widetilde X\in \mathcal{O}_p\llbracket z \rrbracket$.
We have
\begin{align*}
\exp(p^r \widetilde X (z)) = 1+ \sum_{k=1}^\infty \frac{p^{rk}}{k!} \widetilde X(z)^k.
\end{align*}
Then
\begin{align*}
\ord_p\left(\frac{p^{rk}}{k!}\right)= rk- \frac{k- S_p(k)}{p-1}\overset{p\geq 2}\geq rk- k + S_p(k)
\overset{S_p(k)\geq 1}\geq (r-1)k + 1 \overset{k\geq 1}\geq r,
\end{align*}
from which the statement follows.
\end{proof}

Finally, we put the pieces together:

\begin{proof}[Proof of \Cref{thm: thm 4.34}]
Let $V\in \mathcal{S}_{\mathrm{rat}}^2(K|\mathbb{Q})$ and $\nu \in \mathbb{Z}\left[ D^{-1} \right]$ and $N$ the periodicity of $V$.
Furthermore, let $S$ be the set of primes dividing $N$.
Fix an unramified prime $p$ in $K|\mathbb{Q}$ and $n\in \mathbb{N}$, such that $p\not\in S$.
Let $a_m:= [V(z)]_m$ and write $X(z)= \mathrm{Frob}_\mathfrak{p} V (z^p)- V(z)$.
Hence, since $\mathcal{S}_{\mathrm{rat}}^2(K|\mathbb{Q})\subset \mathcal{S}^\infty(K|\mathbb{Q})_{S}$
\begin{align*}
	X(z)= - \sum_{\substack{k=1 \\ p\nmid k}}^\infty a_k z^k.
\end{align*}
By \Cref{lemma: a trivial lemma} and since $ \frac{\nu n}{p^{\ord_p(n)}}V \in \mathcal{S}^1(K|\mathbb{Q})$, we obtain
\begin{align*}
\exp(\nu np \smallint V(z))
&=\exp (-\nu np \smallint X(z))\exp(\nu n p \smallint (\mathrm{Frob}_p V(z^p)))\\
&\equiv \exp(\nu n p \smallint (\mathrm{Frob}_p V(z^p))) \mod p^{\ord_p(n)+1} \mathcal{O}_\mathfrak{p}\\
&= \exp\left(\nu n \sum_{k=1}^\infty \frac{a_{pk}}k z^{pk}\right).
\end{align*}
Let us denote $\displaystyle \exp \left( \sum_{k=1}^\infty \frac{a_{pk}}k z^k \right) = 1+ \sum_{k=1}^\infty y_k z^k=Y(z)$ and
\begin{align*}
Y(z)^{\nu n} = \exp\left(\nu n \sum_{k=1}^\infty \frac{a_{pk}}k z^k\right)
&= \tilde Y(z).
\end{align*}
By Dwork's Integrality \Cref{characterization of 1-functions}, we have
\begin{align*}
\widetilde Y (z), Y(z)  \in \mathcal{O}_\mathfrak{p} \llbracket z \rrbracket.
\end{align*}
Set $\tilde y_m = \left[\widetilde Y (z)\right]_m$ for all $m\in \mathbb{N}_0$.
Note that by \Cref{lemma: final countdown},
\begin{align*}
\tilde y_m \equiv 0 \mod p^{\max\{ 0, \ord_p(\nu n) - \ord_p(m) \}}\mathcal{O}_\mathfrak{p}.
\end{align*}
Then we compute the expression given in \Cref{cor:3-function characterization} explicitly 
\begin{align}\label{eq: crucial reduction executed}
\frac2{p^2n^2}(\mathrm{Frob}_\mathfrak{p}(a_n^+) - a_{pn}^+)
&\equiv  - \nu \left[ \frac{V(z)}{z^{pn}}  \widetilde Y(z^p) \smallint\,\hspace{-0.25em}^2 X(z)\right]_0 \mod p^{\ord_p(n)+1 - \delta_{p,3}} \mathcal{O}_\mathfrak{p}\nonumber\\
&=
\nu \sum_{k=1}^\infty \sum_{\substack{\ell = 1 \\ p\nmid \ell}}^\infty \sum_{m=0}^\infty \tilde y_m \frac{a_ka_\ell}{\ell^2}\left[ z^{p(m-n) + k + \ell} \right]_0\nonumber\\
&= 
\nu \sum_{m=0}^n \tilde y_m \sum_{\substack{\ell = 1 \\ p\nmid \ell}}^{p(n-m)}\frac{a_{p(n-m)-\ell}a_\ell}{\ell^2},
\end{align}
where for the last step, we used $\displaystyle \left[z^{p(m-n)+k+\ell}\right]_0 = \delta_{k,p(n-m)-\ell}$.
We need to compute 
\begin{align}\label{eq: countdown}
x(m)=\ord_p \left( \nu \tilde y_m \sum_{\substack{ \ell =1 \\ p\nmid \ell} }^{p(n-m)} \frac{a_{p(n-m)-\ell}a_\ell}{\ell^2} \right).
\end{align}
By \Cref{lemma: general wolstenholme} and \Cref{lemma: final countdown} and respecting the $p$-adic estimation used in the calculation given in \cref{eq: crucial reduction executed}, we obtain
\begin{align*}
x(m) &\geq\\
&\hspace{-2em} \min \left\{ \ord_p(pn) - \delta_{p,3}, \max\{ 0, \ord_p(n) - \ord_p(m) \} + \max\{0, \ord_p(p(n-m)) - \gamma_p\} \right\},
\end{align*}
where $\gamma_p$ is given as in \Cref{lemma: general wolstenholme}.
\begin{itemize}
\item
For $\ord_p (n) \geq \ord_p(m)$ and $\gamma_p \leq \ord_p(n-m)+1$ we have
\begin{align*}
x(m) &\geq \min\left\{ \ord_p(n)+1 - \delta_{p,3},\ord_p(n)- \ord_p(m)+ \ord_p(m)+1 - \gamma_p \right\} \\
&=\ord_p(n)+1 -\gamma_p\geq 0.
\end{align*}

\item
For $\ord_p(n)\geq \ord_p(m)$ and $\gamma_p > \ord_p(n-m) + 1$, then $-\ord_p(m) > 1-\gamma_p$ and therefore
\begin{align*}
x(m) &\geq \min \left\{ \ord_p(n)+1 -\delta_{p,N}, \ord_p(n)- \ord_p(m) + \ord_p(m) + 1 - \gamma_p \right\}\\
&= \ord_p(n)+1 - \gamma_p\geq 0.
\end{align*}

\item
For $\ord_p(n) < \ord_p(m)$ and $\gamma_p \leq \ord_p(n-m) + 1$, we have
\begin{align*}
x(m) &\geq \min \left\{ \ord_p(n)+1 -\delta_{p,N}, \ord_p(n) + 1 - \gamma_p \right\}\\
&= \ord_p(n)+1 - \gamma_p\geq 0.
\end{align*}

\item
For $\ord_p(n) < \ord_p(m)$ and $\gamma_p > \ord_p(n-m) + 1$, we have
\begin{align*}
x(m) &\geq \min \left\{ \ord_p(n)+1 -\delta_{p,N}, 0 \right\}\\
&= 0 > \ord_p(n-m)+1 - \gamma_p= \ord_p(n)+1-\gamma_p.
\end{align*}

\end{itemize}

Therefore, for $x := \min\{x(m)\,|\, m \in \{0,...,n\}\}$, we have
\begin{align*}
x \geq
\max\{0, \ord_p(n) + 1 - \gamma_p\}.
\end{align*}
Hence,
\begin{align*}
\mathrm{Frob}_\mathfrak{p}(a_n^+)- a_{pn}^+ \equiv
0 \mod p^{2( \ord_p(n) + 1) - \delta_{2,p} + \max\{0, \ord_p(n) + 1 - \gamma_p\} } \mathcal{O}_\mathfrak{p},
\end{align*}
as stated.
In particular, for $p \geq 5$ unramified in $K|\mathbb{Q}$, that does not divide $N$, we have (in this case, $\gamma_p=0$)
\begin{align*}
\mathrm{Frob}_\mathfrak{p} (a_n^+)- a_{pn}^+ \equiv
0 \mod p^{3( \ord_p(n) + 1)} \mathcal{O}_\mathfrak{p}.
\end{align*}
Nonetheless, for
\begin{align*}
 C = 2 \cdot \prod_{p\text{ prim}} p^{\gamma_p},
\end{align*}
we have $C \cdot V^{(+,\nu )}(z) \in \mathcal{S}^3(K|\mathbb{Q})_S$, and therefore,
$V^{(+,\nu)}(z) \in \overline{\mathcal{S}}^3 (K|\mathbb{Q})_S$.
\end{proof}

\section{Fractional Framing and the Proof of \Cref{thm: intro 2-integrality of fractional framing}}\label{section: fractional framing and examples}

In this section we will introduce the notion of fractional framing.
For $\nu\in \mathbb{Q}$ and $V\in \mathcal{S}^2(K|\mathbb{Q})$, $V^{(-,\nu)}$ fails to to fulfill the local 2-function property precisely at those $p$ such that $\ord_p(\nu)<0$.
This can be fixed by applying the Cartier operator $\mathscr{C}_\sigma$ to $V^{(-,\nu)}$, where $\ord_p(\sigma \nu)\geq 0$.
This is referred to as \textit{fractional framing}.

\begin{theorem}\label{thm: 2-integrality of fractional framing}
Let $V\in \mathcal{S}^2(K|\mathbb{Q})$ and $\nu \in \mathbb{Q}$ and $\rho,\sigma \in \mathbb{N}$, such that $\gcd(\rho,\sigma)=1$.
Then, if $\nu\frac\sigma\rho \in \mathbb{Z}\left[ D^{-1} \right]$,
\begin{align*}
\frac1\sigma \varepsilon^{(2)}_\rho \left( \mathscr{C}_\sigma \left( \Phi^-(\nu,V) \right)  \right) \in \mathcal{S}^2 (K | \mathbb{Q} )
\end{align*}
and, if $\nu\frac\sigma\rho \in \mathbb{Z}$,
\begin{align*}
\frac1\sigma \varepsilon^{(2)}_\rho \left( \mathscr{C}_\sigma \left( \Phi^+(\nu,V) \right)  \right) \in \left( \mathcal{S}^2 (K | \mathbb{Q} )_{\{2\}} \cap \overline{\mathcal{S}}^2(K|\mathbb{Q})\right).
\end{align*}
\end{theorem}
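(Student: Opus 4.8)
The plan is to read off the coefficients of the resulting power series from \Cref{prop: framing operators and properties}$(iii)$ together with the definitions of $\mathscr{C}_\sigma$ and $\varepsilon^{(2)}_\rho$, and then to verify the local $s$-function property at each unramified prime by the integration-by-parts mechanism used in the proof of \Cref{3-function characterization} and \Cref{cor:3-function characterization}, but carried out at level $2$ rather than level $3$. The inputs are Dwork's Integrality Lemma (\Cref{characterization of 1-functions}), \Cref{lemma: final countdown} and \Cref{lemma: a trivial lemma}. Fix an unramified prime $p$ in $K|\mathbb{Q}$ and a prime ideal $\mathfrak{p}\mid(p)$, put $Y:=\exp(\smallint V)$, and write $\mu:=\nu\sigma/\rho$, which by hypothesis lies in $\mathbb{Z}[D^{-1}]$ (respectively in $\mathbb{Z}$ in the $\Phi^{+}$-case). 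First I would unwind the definitions: by \Cref{prop: framing operators and properties}$(iii)$ the $n$-th coefficient of $\Phi^{\pm}(\nu,V)$ is $(\pm 1)^{\nu n}\tfrac1\nu[Y^{\nu n}]_{n}$, so $\mathscr{C}_\sigma$ turns the exponent $\nu\sigma n$ into the $\mathbb{Z}[D^{-1}]$-multiple $\rho\mu n$, while composing with $\varepsilon^{(2)}_\rho$ and the scalar $\tfrac1\sigma$ merges $\tfrac1\sigma$ and $\tfrac1\nu$ into $\tfrac1{\rho\mu}$ and redistributes the indices; the coprimality $\gcd(\rho,\sigma)=1$ is precisely what makes the framing parameter come out as the $\mathbb{Z}[D^{-1}]$-integer $\rho\mu$, pairing it with the $\tfrac1{\rho\mu}$. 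This gives a closed formula exhibiting each coefficient $c_{n}$ of $W:=\tfrac1\sigma\,\varepsilon^{(2)}_\rho\bigl(\mathscr{C}_\sigma\,\Phi^{\pm}(\nu,V)\bigr)$ as a $\mathbb{Z}[D^{-1}]$-multiple of a single coefficient of a power $Y^{\rho\mu k}$ of $Y$, with $k$ depending affinely on $n$.

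Next I would prove $\mathfrak{p}$-integrality of the $c_{n}$. A $2$-sequence is \emph{a fortiori} a $1$-sequence, so $V\in\mathcal{S}^{2}(K|\mathbb{Q})\subset\mathcal{S}^{1}(K|\mathbb{Q})$ and Dwork's Lemma gives $Y\in 1+z\mathcal{O}[D^{-1}]\llbracket z\rrbracket$; for any $\mathfrak{p}$-adic integer $c$ the binomial expansion shows $Y^{c}=\exp(c\smallint V)\in 1+z\mathcal{O}_\mathfrak{p}\llbracket z\rrbracket$ (an honest power series over $\mathcal{O}[D^{-1}]$ when $\mu\in\mathbb{Z}$), applicable with $c=\rho\mu k$. \Cref{lemma: final countdown}, extended by $\mathfrak{p}$-adic continuity in the exponent in the $\Phi^{-}$-case, then bounds the $\mathfrak{p}$-order of the relevant coefficient of $Y^{\rho\mu k}$ from below by just enough to absorb the $\tfrac1{\rho\mu}$ once the $\varepsilon^{(2)}_\rho$-bookkeeping has made the subscript line up. For the finitely many $p\mid\sigma$ — and, in the $\Phi^{+}$-case, for $p=2$, where the missing sign $(-1)^{\nu n}$ costs a bounded amount — one is left with at worst a bounded denominator, which is exactly why the $\Phi^{+}$-assertion is stated only up to a multiplicative constant and only away from $2$, i.e. in $\mathcal{S}^{2}(K|\mathbb{Q})_{\{2\}}\cap\overline{\mathcal{S}}^{2}(K|\mathbb{Q})$, whereas the sign convention built into $\Phi^{-}$ supplies the exact cancellation that lands the series in $\mathcal{S}^{2}(K|\mathbb{Q})$ with no exceptional prime.

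The central step is the Frobenius congruence expressing the local $2$-function property of $W$ at $\mathfrak{p}$, which by the standard reformulation — membership in $\mathcal{S}^{2}$ being equivalent to $\smallint^{2}\bigl(\frob_\mathfrak{p}W(z^{p})-W(z)\bigr)\in z\mathcal{O}_\mathfrak{p}\llbracket z\rrbracket$ holding at every unramified $\mathfrak{p}$ — reduces to a one-step estimate $\frob_\mathfrak{p}(c_{k})-c_{pk}\equiv 0\bmod p^{2\ord_p(pk)-e_{p}}\mathcal{O}_\mathfrak{p}$, with $e_{p}=0$ for $\Phi^{-}$ and $e_{p}$ a bounded quantity supported at $p=2$ and at $p\mid\sigma$ for $\Phi^{+}$, just as in the passage from \Cref{3-function characterization} to \Cref{thm: thm 4.34}. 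To establish it, write $X(z):=\frob_\mathfrak{p}V(z^{p})-V(z)$, so that $V\in\mathcal{S}^{2}(K|\mathbb{Q})$ is equivalent to $\smallint^{2}X(z)\in z\mathcal{O}_\mathfrak{p}\llbracket z\rrbracket$, and recall from Dwork's Lemma and \Cref{lemma: a trivial lemma} that $\frob_\mathfrak{p}(Y)(z^{p})/Y(z)^{p}=\exp(p\smallint X(z))\in 1+pz\mathcal{O}_\mathfrak{p}\llbracket z\rrbracket$. Substituting $z\mapsto z^{p}$ (which scales indices by $p$) rewrites $\frob_\mathfrak{p}(c_{k})-c_{pk}$ as $\tfrac1{\rho\mu}$ times a coefficient of $Y^{\rho\mu pk}\bigl((1+w)^{\rho\mu k}-1\bigr)$ with $w:=\exp(p\smallint X)-1$; one expands in powers of $w$, controls the $\mathfrak{p}$-orders of $w^{j}$ and of $\binom{\rho\mu k}{j}$ via \Cref{lemma: a trivial lemma}, and — mirroring \Cref{3-function characterization} but stopping one order earlier — integrates by parts via \cref{intbyparts} against $\smallint^{2}X$, using that $F:=z^{-\sigma pk}\exp(\rho\mu pk\smallint V)$ satisfies $\delta F=pk(\rho\mu V-\sigma)F\equiv 0\bmod p^{\ord_p(pk)}\mathcal{O}_\mathfrak{p}\llbracket z\rrbracket$. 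Since only $\smallint^{2}X$, not $\smallint^{3}X$, is available, the cubic term of the $\exp$-expansion — hence the $e^{p}\bmod p^{3}$ correction responsible for the $p=3$ defect in \Cref{thm: thm 4.34} — never appears, consistent with the conclusion being genuine $2$-integrality.

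The hard part will be the uniform $\ord_p$-bookkeeping at the bad primes: pinning down precisely how $\mathscr{C}_\sigma$, $\varepsilon^{(2)}_\rho$ and the scalar $\tfrac1\sigma$ together cancel the $\tfrac1{\rho\mu}$ and the fractional exponent — which is where the hypotheses $\gcd(\rho,\sigma)=1$ and $\nu\sigma/\rho\in\mathbb{Z}[D^{-1}]$ are genuinely used — and how the sign $(-1)^{\nu n}$, present in $\Phi^{-}$ but absent from $\Phi^{+}$, governs the estimate at $p=2$ and thereby accounts for the asymmetry between the two conclusions. Finally, specialising $\sigma=\rho=1$ recovers \Cref{integrality of framing}.
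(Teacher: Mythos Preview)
Your plan follows the paper's proof closely: unwind the coefficients via \Cref{prop: framing operators and properties}$(iii)$, reduce to the case $\rho\mid n$, expand the exponential of $\smallint X$, and integrate by parts against $\smallint^{2}X$. Two points deserve comment.

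First, the detour through $w:=\exp(p\smallint X)-1$ and the binomial expansion of $(1+w)^{\rho\mu k}$ is not the paper's route and does not by itself give enough $p$-adic depth: controlling $\binom{\rho\mu k}{j}w^{j}$ via $w\in p\mathcal{O}_\mathfrak{p}\llbracket z\rrbracket$ alone yields only $\ord_p\geq j+\ord_p(\rho\mu k)-\ord_p(j)$, which falls short of $2\ord_p(pk)$ once $\ord_p(k)\geq 1$. The paper instead expands $\exp\bigl(\nu\tfrac{\sigma}{\rho}np\,\smallint X\bigr)-1=\sum_{k\geq 1}(\nu\sigma)^{k-1}\tfrac{\sigma}{k!}\bigl(\tfrac{np}{\rho}\bigr)^{k}(\smallint X)^{k}$ directly; here the factor $\bigl(\tfrac{np}{\rho}\bigr)^{k}$ already carries $k\,\ord_p\bigl(\tfrac{np}{\rho}\bigr)$, which is what drives the estimate, and the integration by parts against $\smallint^{2}X$ applies cleanly to the surviving $k=1$ term. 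You would recover this by further expanding $w$ in $\smallint X$, but then the $w$-packaging has bought nothing.

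Second, and more substantially, the case $p=2$ with $\ord_2\bigl(\nu\tfrac{\sigma}{\rho}n\bigr)=0$ --- which you correctly flag as the crux for $\Phi^{-}$ --- needs an idea your sketch does not supply. In that regime \emph{every} term $k=2^{\ell}$ of the exponential expansion survives modulo~$2$, and the paper disposes of them via the identity
\[
\sum_{\ell\geq 0}(\smallint X(z))^{2^{\ell}}\;\equiv\;V(z)\pmod{2\,\mathcal{O}_{2}\llbracket z\rrbracket},
\]
proved by iterated squaring together with $x_{i}^{2^{\ell}}\equiv\frob_{2}^{\ell}(a_{i})\equiv a_{2^{\ell}i}\bmod 2$ for odd $i$ (using only that $V\in\mathcal{S}^{1}$). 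The resulting $\bigl[F(z)(1+V(z))\bigr]_{0}$ is then killed mod~$2$ by the partial-integration identity $\bigl[F(z)(\nu V(z)-1)\bigr]_{0}=0$ of \cref{eq: partial integration for framing coefficients}. Without this step the $\Phi^{-}$-statement does not close at $p=2$.

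One minor correction: the primes $p\mid\sigma$ are \emph{not} exceptional in either conclusion. The extra $\ord_p(\sigma)$ in the congruence $\frob_\mathfrak{p}\bigl(a^{-}_{\sigma n/\rho}\bigr)-a^{-}_{\sigma pn/\rho}\equiv 0\bmod p^{2(\ord_p(n)+1-\ord_p(\rho))+\ord_p(\sigma)}$ exactly absorbs the prefactor $\tfrac{1}{\sigma}$, so $p\mid\sigma$ causes no loss; the only genuinely exceptional prime is $p=2$ in the $\Phi^{+}$-case, as the theorem asserts.
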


\begin{proof}
The proof we are presenting here goes follows the same arguments and steps as the proof of \Cref{integrality of framing}. As above, we assume $\nu\neq 0$.

\noindent We write $\widetilde V = \frac1\sigma \varepsilon^{(2)}_\rho \left( \mathscr{C}_\sigma \left( \Phi^-(\nu,V) \right)  \right)$ and $\tilde a_n^- := \left[ \widetilde V(z)\right]_n$ for all $n \in \mathbb{N}$.
We have
\begin{align*}
\tilde a_n^- = \frac{\rho^2}\sigma \left[ \mathscr{C}_\sigma V^{(-,\nu)}(z) \right]_{\nicefrac n\rho}
= \frac{\rho^2}\sigma a_{\nicefrac{\sigma n}\rho}^-,
\end{align*}
with the understanding that $\tilde a_n^-= 0$, whenever $\rho \nmid n$.
Then
\begin{align*}
\frob_\mathfrak{p}(\tilde a^-_n) - \tilde a^-_{pn}=
\begin{cases}
0,& \text{if $\rho \nmid pn$},\\
- \frac{\rho^2}\sigma a^-_{\nicefrac{\sigma pn}\rho},& \text{if $\rho \mid pn$, but $\rho\nmid n$},\\
\frac{\rho^2}\sigma \left( \frob_\mathfrak{p}\left( a_{\nicefrac{\sigma n}\rho}^- \right) -  a_{\nicefrac{\sigma pn}\rho}^- \right),& \text{if $\rho\mid n$}.
\end{cases}
\end{align*}
In the first two cases, the local 2-function property at the prime $p$ is trivially satisfied.
For $\rho\mid n$, we still need to check
\begin{align*}
\frob_\mathfrak{p}\left( a^-_{\nicefrac{\sigma n}\rho} \right) - a_{\nicefrac{\sigma pn}\rho}^- \equiv 0 \mod p^{2(\ord_p(n)+1- \ord_p(\rho))+\ord_p(\sigma)}\mathcal{O}_\mathfrak{p}.
\end{align*}
In the following, we will assume $\ord_p(\rho)\leq \ord_p(n)$, which is an implementation of the condition $\rho \mid n$.
\begin{itemize}
\item[\textit{Case 1:}] $p\geq 3$.
Let $p$ be a prime number unramified in $K|\mathbb{Q}$ greater than $3$.
Recall that $a^-_{\nicefrac{\sigma n}\rho}= (-1)^{\nu \frac{\sigma n}\rho} a_{\nicefrac{\sigma n}\rho}^+$.
As before, we write
\begin{align*}\smallint\hspace{-0.25em}\,^2X(z) = \smallint\hspace{-0.25em}\,^2 \left( \frob_\mathfrak{p} V\left(z^p\right) - V(z) \right).
\end{align*}
Then by the same $p$-adic estimation as given in \textit{Case 1} of the proof of \Cref{integrality of framing},  we have
\begin{align*}
\frob_\mathfrak{p}( a_{\nicefrac{\sigma n}\rho}^+) - a_{\nicefrac{\sigma pn}\rho}^+
&= \left[ \frac{\exp \left( \nu \frac\sigma\rho n p \smallint V(z) \right)}{z^{\nicefrac{\sigma pn }\rho}} \sum_{k=1}^\infty (\sigma\nu)^{k-1} \frac\sigma{k!} \left( \frac{np}\rho \right)^k \left( \smallint X(z) \right)^k \right]_0.
\end{align*}
Using $\ord_p(\sigma \nu) \geq 0$ and $\rho\mid n$ we obtain for $k\geq 2$
\begin{align*}
\ord_p\left( (\sigma\nu)^{k-1} \frac\sigma{k!} \left( \frac{np}\rho \right)^k \right)
&\geq 2 \ord_p\left( \frac n\rho p \right) + \ord_p(\sigma).
\end{align*}
Therefore,
\begin{align*}
\frob_\mathfrak{p}( a_{\nicefrac{\sigma n}\rho}^+) - a_{\nicefrac{\sigma pn}\rho}^+\hspace{-8em}&\\
&\equiv \frac\sigma\rho pn \left[ \frac{\exp\left( \nu \frac\sigma\rho p n \smallint V(z) \right)}{z^{\nicefrac{pn\sigma}\rho}} \cdot \smallint X(z) \right]_0 \mod p^{2(\ord_p(n)+1 - \ord_p(\rho)) + \ord_p(\sigma)} \mathcal{O}_\mathfrak{p}\\
&= -\sigma \left( \frac{pn}\rho \right)^2\cdot \left[ (\sigma \nu V(z) - \sigma) \cdot \left( \frac{\exp(\nu \smallint V(z))}z \right)^{\nicefrac{pn\sigma}\rho} \hspace{-0.25em} \cdot \smallint\hspace{-0.25em}\,^2 X(z) \right]_0.
\end{align*}
Since $\ord_p(\nu\sigma)\geq 0$, the expression in $[-]_0$ is a $p$-adic integer.
Therefore,
\begin{align*}
\frob_\mathfrak{p} \left( a_{\nicefrac{\sigma n}\rho}^+ \right) - a_{\nicefrac{\sigma pn}\rho}^+ &=0 \mod p^{2(\ord_p(n)+1-\ord_p(\rho))+\ord_p(\sigma)} \mathcal{O}_\mathfrak{p}.
\end{align*}
\item[\textit{Case 2:}] \textit{$p=2$, and $\ord_2\left(\frac{\sigma n}\rho \nu\right)\geq 1$.}
Then, if $\frac{\sigma n}\rho \in \mathbb{Z}$,
\begin{align*}
\frob_2 \left( a_{\nicefrac{\sigma n}\rho}^- \right) - a_{\nicefrac{2\sigma n}\rho}^-
&= (-1)^{\nicefrac{\nu \sigma n}\rho} \left( \frob_2 \left( a_{\nicefrac{\sigma n}\rho}^+ \right) - a_{\nicefrac{2\sigma n}\rho}^+ \right)
\end{align*}
Therefore, it suffices to check the congruence for $\frob_2 \left( a_{\nicefrac{\sigma n}\rho}^+ \right) - a_{\nicefrac{2\sigma n}\rho}^+$ and we may assume $\frac{\sigma n}\rho \in \mathbb{Z}\left[D^{-1}\right]$.
We have
\begin{align*}
 \frob_2 \left( a_{\nicefrac{\sigma n}\rho}^+ \right) - a_{\nicefrac{2\sigma n}\rho }^+ \hspace{-2em}&\\
 &= \left[ \frac{\exp \left( 2 \nu \frac\sigma\rho n \smallint V(z) \right)}{z^{\nicefrac{2 \sigma n}\rho}}\left( 
\sum_{k=1}^\infty (\sigma \nu)^{k-1} \frac\sigma{k!}\left( \frac{2n}\rho \right)^k \left( \smallint X(z) \right)^k \right)\right]_0.
\end{align*}
For $k\geq 3$ we have
\begin{align*}
\ord_2\left( (\sigma \nu)^{k-1} \frac\sigma{k!}\left( \frac{2n}\rho \right)^k \right)\hspace{-8em}&\\
&= k \ord_2 \left( \frac n\rho \right) + \ord_2(\sigma) + 1 + (k-1) \ord_2 (\nu \sigma) - \ord_2(k!) + S_2(k).
\end{align*}
Recall that $S_2 (k)$ denotes the sum of the digits of $k$ in base $2$.
Using $S_2(k) \geq 1$ for all $k\in \mathbb{N}$, $\ord_2\left( \frac{\sigma n }\rho \nu \right) \geq 1$, and $\ord_2(\nu \sigma)\geq 0$, we obtain
\begin{align*}
\ord_2\left( (\sigma \nu)^{k-1} \frac\sigma{k!}\left( \frac{2n}\rho \right)^k \right)
&\overset{k\geq 3}\geq 2\left( \ord_2\left( \frac n\rho \right) +1 \right) + \ord_2(\sigma).
\end{align*}
Therefore,
\begin{align*}
 \frob_2 \left( a_{\nicefrac{\sigma n}\rho}^+ \right) - a_{\nicefrac{2\sigma n}\rho }^+ &\equiv \frac{2\sigma n}\rho \left[ \frac{\exp \left( 2 \nu \frac\sigma\rho n \smallint V(z) \right)}{z^{\nicefrac{2 \sigma n}\rho}} \times \right. \\
 &\hspace{-2em}\left.\times \left( \smallint X(z) + \nu \frac\sigma\rho n \left( \smallint X(z) \right)^2 \right) \right]_0 \mod 2^{2 \left( \ord_2 \left(\frac n\rho \right) +1 \right) + \ord_2(\sigma)} \mathcal{O}_2.
\end{align*}
What remains to show is
\begin{align}\label{eq: fractional p=2 for Phi+}
 \left[ \frac{\exp \left( 2 \nu \frac\sigma\rho n \smallint V(z) \right)}{z^{\nicefrac{2 \sigma n}\rho}} \smallint X(z)\right]_0  \equiv 0 \mod 2^{\ord_2(n)-\ord_2(\rho)+1} \mathcal{O}_2
\end{align}
and
\begin{align}\label{eq: fractional p=2 for Phi+ p1andahalf}
\nu \sigma \left[ \frac{\exp \left( 2 \nu \frac\sigma\rho n \smallint V(z) \right)}{z^{\nicefrac{2 \sigma n}\rho}} \left( \smallint X(z) \right)^2 \right]_0 \equiv 0 \mod 2 \mathcal{O}_2.
\end{align}
The first summand (a.k.a. \cref{eq: fractional p=2 for Phi+}) vanishes by the same calculation as in the previous case.
Therefore, it remains to show \cref{eq: fractional p=2 for Phi+ p1andahalf}.
Since $x_i \in 2^{2\ord_2(i)} \mathcal{O}_{(2)}$, we have
\begin{align}\label{eq: peven}
(\smallint X(z))^2 = \sum_{i,j=1}^\infty \frac{x_ix_j}{ij} z^{i+j} &=
2 \sum_{\substack{i,j=1 \\ i<j}}^\infty\frac{x_ix_j}{ij}z^{i+j}
+\sum_{i=1}^\infty \frac{x_i^2}{i^2}z^{2i}\nonumber\\
&\equiv \sum_{\substack{i=1\\ i \text{ odd}}}^\infty \frac{x_i^2}{i^2} z^{2i}
\equiv \sum_{\substack{i=1\\ i \text{ odd}}}^\infty x_i^2 z^{2i}
\mod 2 z\mathcal{O}_2 \llbracket z \rrbracket.
\end{align}
Hence,
\begin{align*}
\left[ \frac{\exp \left( 2 \nu \frac\sigma\rho n \smallint V(z) \right)}{z^{\nicefrac{2 \sigma n}\rho}} \left( \smallint X(z) \right)^2 \right]_0 \hspace{-3em}& \\
&\equiv \sum_{\substack{i=1\\ i\text{ odd}}}^\infty x_i^2\left[ \exp \left( 2 \nu \frac\sigma\rho n \smallint V(z) \right)  \right]_{2\left( \frac{\sigma n}\rho - i \right)} \mod 2 \mathcal{O}_2.
\end{align*}
Using \Cref{lemma: final countdown}, we find for all odd $i\in \mathbb{N}$, $i\leq \frac\sigma\rho n$,
\begin{align}\label{eq: pos}
\nu \sigma \left[ \exp\left(2\nu \frac\sigma\rho n \smallint V(z) \right) \right]_{2\left( \frac\sigma\rho n - i\right)}
\equiv 0  \mod 2 \mathcal{O}_2,
\end{align}
since:
\begin{itemize}
\item if $\ord_2\left( \frac{\sigma n}\rho\right)\geq 1$, then $\ord_2\left( \frac{\sigma n}\rho -i \right) = \ord_2(i)=0$ and therefore
\begin{align*}
\ord_2\left( 2 \nu \frac\sigma\rho n \right) - \ord_2 \left( 2 \left( \frac{\sigma n}\rho - i \right) \right)
\geq 2-1 =1.
\end{align*}
\item if $\ord_2 \left( \frac{\sigma n}\rho \right) =0$, then $\ord_2(\nu)\geq 1$ (and therefore, $\ord_2(\nu\sigma)\geq 1$).
In that case, the congruence \cref{eq: pos} is immediately satisfied, since the power series in the brackets $[-]_0$ has $2$-adic integral coefficients by Dwork's Integrality Lemma.
\end{itemize}
Finally, we have
\begin{align*}
\frob_2 \left( a_{\nicefrac{\sigma n}\rho}^+ \right) - a_{\nicefrac{2\sigma n}\rho }^+ \equiv 0 \mod 2^{2(\ord_2(n)+1-\ord_2(\rho)) + \ord_2 (\sigma)} \mathcal{O}_2.
\end{align*}
\item[\textit{Case 3:}] \textit{Let $p=2$, $\ord_2 \left( \nu \frac\sigma\rho n \right) = 0$ and $\nu \frac\sigma\rho\in \mathbb{Z}$.}
First recall that $\gcd(\sigma,\rho)=1$ by definition, $\ord_2(\rho)\leq \ord_2(n)$, since $\rho\mid n$ by assumption, and $\ord_2(\nu \sigma)\geq 0$.
Therefore, we immediately see that $\ord_2\left( \frac n\rho \right) = \ord_2( \nu \sigma )=0$.
Indeed, since we assume $\ord_2\left( \nu\frac \sigma \rho n \right)=0$, we have
\begin{align*}
0 \leq \ord_2 \left( \frac n\rho \right) = - \ord_2 \left(\nu \sigma \right)\leq 0.
\end{align*}
Note that
\begin{align*}
(-1)^{\nicefrac{\nu \sigma n}\rho} \left( \frob_2 \left( a_{\nicefrac{n\sigma}\rho}^- \right) - a_{\nicefrac{2n\sigma}\rho}^- \right)
&= \frob_2 \left( a_{\nicefrac{n\sigma}\rho}^+ \right) + a_{\nicefrac{2n \sigma}\rho}^+.
\end{align*}
Therefore, we need to show
\begin{align*}
\frob_2 \left( a^+_{\nicefrac{n\sigma}\rho} \right) + a^+_{\nicefrac{2n \sigma}\rho} \equiv 0 \mod 2^{2+ \ord_2(\sigma)} \mathcal{O}_2.
\end{align*}
We have
\begin{align*}
\frob_2 \left( a_{\nicefrac{n\sigma}\rho}^+ \right) + a_{\nicefrac{2n\sigma}\rho}^+\hspace{-4em}&\\
&\equiv
\left[ \frac{\exp \left( 2\nu \frac\sigma\rho n \smallint V(z) \right)}{z^{\nicefrac{2\sigma n}\rho}} \left( \frac2\nu + \sum_{k=1}^\infty (\nu \sigma)^{k-1} \frac\sigma{k!} \left( \frac{2n}\rho \right)^k (\smallint X(z))^k \right) \right]_0.
\end{align*}
For all $k\in \mathbb{N}$ such that $k \neq 2^\ell$ for some $\ell \in \mathbb{N}$, we have $\ord_2(k)\geq 2$ and therefore, for such $k$,
\begin{align*}
\ord_2\left( (\nu \sigma)^{k-1} \frac\sigma{k!} \left( \frac{2n}\rho\right)^k \right)
&\geq 2+\ord_2(\sigma).
\end{align*}
Also, note that $\ord_2 \left( \frac{2^{2^\ell}}{(2^\ell)!} \right) = 2^\ell - 2^\ell + 1= 1 $ for all $\ell \in \mathbb{N}$ -- and hence $\frac{2^{2\ell}}{2(2^\ell)!} \equiv 1 \mod 2$ -- we obtain
\begin{align*}
\frob_2 \left( a_{\nicefrac{n\sigma}\rho}^+ \right) + a_{\nicefrac{2n\sigma}\rho}^+ &\equiv 2\sigma \left[ \frac{\exp \left( 2\nu \frac\sigma\rho n \smallint V(z) \right)}{z^{\nicefrac{2\sigma n}\rho}}\times \right. \\ &\hspace{-5em}\times \left. \left( \frac1{\nu\sigma} + \sum_{\ell=0}^\infty  \frac{ (\nu \sigma)^{2^\ell -1} }{2(2^\ell)!} \left( \frac{2n}\rho \right)^{2^\ell} (\smallint X(z))^{2^\ell} \right) \right]_0 \mod 2 ^{2 + \ord_2 (\sigma)} \mathcal{O}_2.
\end{align*}
Note that $\frac1{\nu\sigma} \equiv \frac n\rho \equiv \frac{2^{2^\ell}}{2(2^\ell)!} \equiv 1 \mod 2 \mathbb{Z}_2$, it remains to prove
\begin{align}\label{eq: all we need}
\left[ \frac{\exp \left( 2\nu \frac\sigma\rho n \smallint V(z) \right)}{z^{\nicefrac{2\sigma n}\rho}} \left( 1 + \sum_{\ell=0}^\infty (\smallint X(z))^{2^\ell} \right) \right]_0 \equiv 0 \mod 2 \mathcal{O}_2.
\end{align}
We find for all $\ell\in \mathbb{N}_0$,
\begin{align*}
(\smallint X(z))^{2^\ell} \equiv \sum_{\substack{i=1\\ i \text{ odd}}}^\infty x_i^{2^\ell} z^{2^\ell i}
\mod 2 z\mathcal{O}_2\llbracket z \rrbracket.
\end{align*}
Note that for odd $i\in \mathbb{N}$ (and for $a_{i/2} := 0$ in this case) and since $V$ is in particular an element in $\mathcal{S}^1(K|\mathbb{Q})$, we have
\begin{align*}
x_i^{2^\ell} = (\mathrm{Frob}_2 (a_{i/2}) - a_i)^{2^\ell} = a_i^{2^\ell}\equiv \mathrm{Frob}_2^\ell(a_i) \equiv a_{2^\ell i}\mod 2 \mathcal{O}_2.
\end{align*}
Hence,
\begin{align}\label{eq: 20120714}
\sum_{\ell=0}^\infty (\smallint X(z))^{2^\ell}
&\equiv\sum_{\ell=0}^\infty \sum_{\substack{i=1\\ i \text{ odd}}}^\infty x_i^{2^\ell} z^{2^\ell i}\nonumber\\
&\equiv \sum_{\ell=0}^\infty \sum_{\substack{i=1\\ i \text{ odd}}}^\infty a_{2^\ell i} z^{2^\ell i} 
\equiv \sum_{k=1}^\infty a_k z^k = V(z) \mod 2 \mathcal{O}_2.
\end{align}
Therefore, \cref{eq: all we need} follows from \cref{eq: 20120714} and \cref{eq: partial integration for framing coefficients}
\begin{align*}\left[ \frac{\exp \left( 2 \nu \frac\sigma\rho n \smallint V(z) \right)}{z^{\nicefrac{2\sigma n}\rho}} \left( 1 + \sum_{\ell=0}^\infty \left( \smallint X(z) \right)^{2^\ell} \right) \right]_0 \hspace{-8em}&\\
&\hspace{-1.25em}\overset{\cref{eq: 20120714}}\equiv \left[ \frac{\exp \left( 2 \nu \frac\sigma\rho n \smallint V(z) \right)}{z^{\nicefrac{2\sigma n}\rho}} \left( 1 + V(z) \right) \right]_0 \mod 2\mathcal{O}_2\\
&\equiv \left[ \frac{\exp \left( 2 \nu \frac\sigma\rho n \smallint V(z) \right)}{z^{\nicefrac{2\sigma n}\rho}} \left( 1 - V(z) \right) \right]_0 \mod 2\mathcal{O}_2\\
&\hspace{-1em}\overset{\cref{eq: partial integration for framing coefficients}}\equiv 0 \mod 2\mathcal{O}_2
\end{align*}
\end{itemize}
This finishes the proof.
\end{proof}

\begin{theorem}[Analogue to \Cref{thm: thm 4.34} for fractional framing]\label{thm: fractional analogue of framed 3-functions}
Let $\rho, \sigma \in \mathbb{N}$ with $\gcd(\rho, \sigma)= 1$.
Then
\begin{align*}
\left( \frac1\sigma \varepsilon_\rho^{(3)} \circ \mathscr{C}_\sigma \circ \Phi^{+/-} \right) \left( \left( \frac\rho\sigma\mathbb{Z} \right) \times \mathcal{S}^2_\mathrm{rat} (K|\mathbb{Q}) \right) &\subset \overline{\mathcal{S}}^3(K|\mathbb{Q})_\mathrm{fin}.
\end{align*}
More precisely, for a rational $2$-function $V\in \mathcal{S}_{\mathrm{rat}}^2(K|\mathbb{Q})$ of periodicity $N$ and $\nu \in \frac\rho\sigma \mathbb{Z}$ and $S= \{p \text{ prim with } p\mid N\}\cup\{2,3\}$,
\begin{align*}
\widetilde V (z) := \frac1\sigma \varepsilon^{(3)}_\rho \left( \mathscr{C}_\sigma \left( \Phi^+(\nu, V) \right) \right) \in \mathcal{S}^3(K|\mathbb{Q})_S.
\end{align*}
For $\tilde a^+_n = \left[ \widetilde V(z)\right]_n$, $n\in \mathbb{N}$ we have
\begin{align*}
\mathrm{Frob}_\mathfrak{p} \left( \tilde a_n^+ \right) - \tilde a_{pn}^+ \equiv 0 \mod p^{2 \ord_p(pn) + \ord_p(\rho)-  \delta_{2,p} + \max\{ 0, \ord_p \left( \frac{pn}\rho \right) - \gamma_p \}} \mathcal{O}_\mathfrak{p},
\end{align*}
where $\gamma_p$ is equal to $1 + \ord_2(N+1)$, $1$ and $0$ if $p$ is equal to $2$, $3$ and greater than $3$, respectively.
In particular, for unramified $p\geq 5$ in $K|\mathbb{Q}$ with $p\nmid N$, and all $m,r\in \mathbb{N}$,
\begin{align*}
\frob_\mathfrak{p}\left( \tilde a_{mp^{r-1}}^+ \right) - \tilde a_{mp^r}^+ \equiv 0 \mod p^{3r}\mathcal{O}_\mathfrak{p}.
\end{align*}
\end{theorem}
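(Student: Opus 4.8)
The plan is to transcribe the proof of \Cref{thm: thm 4.34} almost verbatim, with the single parameter $n$ replaced throughout by $n' := \sigma n/\rho$ and with \Cref{thm: 2-integrality of fractional framing} playing the role of \Cref{integrality of framing}; the only genuinely new ingredient is a more careful valuation bookkeeping, since now $\nu$ need not be a $\mathfrak{p}$-adic integer and $p$ may divide $\rho$ or $\sigma$. \emph{Reduction to the essential case.} Exactly as the identity $\tilde a^-_n=\tfrac{\rho^2}\sigma a^-_{\sigma n/\rho}$ was obtained in the proof of \Cref{thm: 2-integrality of fractional framing}, the definitions of $\mathscr C_\sigma$ and $\varepsilon^{(3)}_\rho$ give $\tilde a^+_n=\tfrac{\rho^3}\sigma a^+_{n'}$ when $\rho\mid n$ and $\tilde a^+_n=0$ otherwise, where $a^+_m=[\Phi^+(\nu,V)(z)]_m$. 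Hence, for $\rho\nmid n$, the difference $\frob_{\mathfrak p}(\tilde a^+_n)-\tilde a^+_{pn}$ is either $0$ or $-\tfrac{\rho^3}\sigma a^+_{pn'}$ (the latter only if $\rho\mid pn$, which forces $\ord_p(\rho)=\ord_p(pn)$ and, by $\gcd(\rho,\sigma)=1$, $p\nmid\sigma$); here the $2$-sequence property supplied by \Cref{thm: 2-integrality of fractional framing}, run along the chain of indices from the prime-to-$p$ part of $pn$ up to $pn$ and combined with the vanishing of the sequence at indices not divisible by $\rho$, already forces the required $p$-divisibility of $\tilde a^+_{pn}$, so these boundary cases are settled as in \Cref{thm: 2-integrality of fractional framing}. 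The substance is the case $\rho\mid n$, where it suffices to bound $\frob_{\mathfrak p}(a^+_{n'})-a^+_{pn'}$ and then multiply by $\rho^3/\sigma$.

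\emph{The fractional $3$-function characterization.} Since $V\in\mathcal S^2_{\mathrm{rat}}(K|\mathbb Q)$ of periodicity $N$ lies in $\mathcal S^\infty(K|\mathbb Q)_S$ (where $S$ is the set of primes dividing $N$, an implicit consequence of \cite{mue20}), one has $\smallint^{s} X(z)\in z\mathcal O_{\mathfrak p}\llbracket z\rrbracket$ for all $s\in\mathbb N$ and all $p\notin S$, where $X(z)=\frob_{\mathfrak p} V(z^p)-V(z)=-\sum_{p\nmid k}a_k z^k$. Feeding the coefficient formula $a^+_{n'}=\tfrac1\nu[\exp(\nu n'\smallint V(z))/z^{n'}]_0$ from \Cref{prop: framing operators and properties}$(iii)$ into the same chain of integrations by parts (\cref{intbyparts}) and $p$-adic size estimates of $\tfrac{(\nu n'p)^k}{k!}$ used in \Cref{3-function characterization} and \Cref{cor:3-function characterization} — now keeping $\ord_p(\nu)$, $\ord_p(\rho)$, $\ord_p(\sigma)$ visible rather than assuming $\ord_p(\nu)\ge 0$ — yields
\begin{align*}
\frac2{p^2n'^2}\bigl(\frob_{\mathfrak p}(a^+_{n'})-a^+_{pn'}\bigr)\equiv\nu\Bigl[V(z)\Bigl(\tfrac{\exp(\nu\smallint V(z))}{z}\Bigr)^{pn'}\smallint^{2} X(z)\Bigr]_0 \mod p^{\ord_p(pn')-\delta_{3,p}}\mathcal O_{\mathfrak p}.
\end{align*}

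\emph{Execution.} Now proceed exactly as in the proof of \Cref{thm: thm 4.34}. Because $\nu\in\tfrac\rho\sigma\mathbb Z$ and $\rho\mid n$, the product $\nu n'=\nu\sigma n/\rho$ is an honest integer in $n\mathbb Z$, so \Cref{lemma: a trivial lemma} lets us replace $\exp(\nu n'p\smallint V(z))$ by $\widetilde Y(z^p)$ modulo $p^{\ord_p(n')+1}$, where $\widetilde Y=Y^{\nu n'}$ and $Y=\exp(\sum_k\tfrac{a_{pk}}{k}z^k)$; Dwork's Integrality \Cref{characterization of 1-functions} makes $Y,\widetilde Y$ $\mathfrak p$-integral, and — the exponent $\nu n'$ being a genuine integer — \Cref{lemma: final countdown} applies verbatim to give $[\widetilde Y]_m\equiv0\bmod p^{\max\{0,\ord_p(\nu n')-\ord_p(m)\}}$. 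The displayed congruence then becomes $\nu\sum_{m=0}^{n'}[\widetilde Y]_m\sum_{1\le\ell\le p(n'-m),\,p\nmid\ell}\tfrac{a_{p(n'-m)-\ell}a_\ell}{\ell^2}$, and \Cref{lemma: general wolstenholme} bounds each inner sum by $p^{\max\{0,\ord_p(p(n'-m))-\gamma_p\}}$ (using $\varepsilon_{p,N}=\gamma_p$, valid since $p\nmid N$). Running the same four-case distinction as in \Cref{thm: thm 4.34} — on whether $\ord_p(m)$ is $\le$ or $>$ $\ord_p(n')$ and whether $\gamma_p$ is $\le$ or $>$ $\ord_p(n'-m)+1$, together with $\ord_p(n)=\ord_p(n')-\ord_p(\sigma)+\ord_p(\rho)$, $\ord_p(\nu n')\ge\ord_p(n)$ and $\ord_p(\nu)\ge\ord_p(\rho)-\ord_p(\sigma)$ — then shows, after multiplying through by $\tfrac{p^2n'^2}2$ and by $\rho^3/\sigma$ and checking that the error term $p^{\ord_p(pn')-\delta_{3,p}}$ of the congruence above is never the binding constraint, the asserted congruence $\frob_{\mathfrak p}(\tilde a^+_n)-\tilde a^+_{pn}\equiv0\bmod p^{2\ord_p(pn)+\ord_p(\rho)-\delta_{2,p}+\max\{0,\ord_p(pn/\rho)-\gamma_p\}}\mathcal O_{\mathfrak p}$. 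For unramified $p\ge5$ with $p\nmid N$ one has $\gamma_p=\delta_{2,p}=0$, and along $n\in\{mp^{r-1},mp^r\}$ the exponent is at least $3\ord_p(pn)+\ord_p(\rho)\ge3r$, giving $\frob_{\mathfrak p}(\tilde a^+_{mp^{r-1}})-\tilde a^+_{mp^r}\equiv0\bmod p^{3r}$; hence $\widetilde V\in\mathcal S^3(K|\mathbb Q)_S\subset\overline{\mathcal S}^3(K|\mathbb Q)_{\mathrm{fin}}$, and the case of $\Phi^-$ follows by the sign twist just as for $\Phi^+$.

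\emph{Main obstacle.} The only real work is the valuation bookkeeping of the third step. In \Cref{thm: thm 4.34} the hypothesis $\nu\in\mathbb Z[D^{-1}]$ allowed one to silently drop every factor of $\ord_p(\nu)$ and to use $\ord_p(n)$ in place of $\ord_p(\nu n)$; here, when $p\mid\sigma$ the valuation $\ord_p(\nu)$ is negative, so the estimates of $\tfrac{(\nu n'p)^k}{k!}$, of $[\widetilde Y]_m$, and of the weighted harmonic sums must each be carried with the $\nu$-, $\rho$- and $\sigma$-contributions fully explicit, and one must verify that the $\rho^3/\sigma$ rescaling reproduces exactly the stated modulus in all of the boundary situations ($\rho\nmid n$; $p\in\{2,3\}$; and the exceptional case $p=2$ with $\ord_2(n')=0$, which — as in \Cref{3-function characterization} — has to be excluded). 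Conceptually nothing goes beyond the proofs of \Cref{thm: thm 4.34} and \Cref{thm: 2-integrality of fractional framing}, but the simultaneous arithmetic of the three auxiliary parameters is the delicate point.
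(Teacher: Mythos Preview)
Your proposal is correct and follows essentially the same approach as the paper's own proof: the same reduction to $\rho\mid n$ via $\tilde a^+_n=\tfrac{\rho^3}{\sigma}a^+_{\sigma n/\rho}$, the same three-step structure (fractional analogues of \Cref{3-function characterization}, \Cref{cor:3-function characterization}, and the execution in the proof of \Cref{thm: thm 4.34}), the same use of \Cref{lemma: a trivial lemma}, \Cref{characterization of 1-functions}, \Cref{lemma: final countdown}, and \Cref{lemma: general wolstenholme}, and the same four-case valuation analysis. The only minor difference is that for the boundary case $\rho\mid pn$, $\rho\nmid n$ you invoke the $2$-sequence property from \Cref{thm: 2-integrality of fractional framing} telescoped along the $p$-power chain, whereas the paper simply observes that $\ord_p(\rho^3)=3\ord_p(pn)$ there and calls it trivial; both arguments are valid and yield the same bound.
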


\begin{proof}
The proof we are presenting here goes follows the same arguments and steps as the proof of \Cref{integrality of framing}.
In the following assume $\nu\neq 0$.
Indeed, $\Phi^+(0, V)= \Phi^-(0,V)= V$, as has been checked previously.

\noindent We write $\widetilde V (z)= \frac1\sigma \varepsilon^{(3)}_\rho \left( \mathscr{C}_\sigma \left( \Phi^+ ( \nu , V ) \right) \right)$ and $\tilde a_n^+ := \left[ \widetilde V(z)\right]_n$ for all $n \in \mathbb{N}$.
We have
\begin{align*}
\tilde a_n^+ =\frac1\sigma \left[ \varepsilon^{(3)}_\rho \left( \mathscr{C}_\sigma \left( \Phi^+ ( \nu, V ) \right) \right) \right]_n = \frac{\rho^3}\sigma \left[ \mathscr{C}_\sigma V^{(+,\nu)}(z) \right]_{\nicefrac n\rho}
=\frac{\rho^3}\sigma a_{\nicefrac{\sigma n}\rho}^+,
\end{align*}
with the understanding that $a_{\nicefrac{\sigma n}\rho}^+= 0$, whenever $\rho \nmid n$.
Then
\begin{align*}
\mathrm{Frob}_\mathfrak{p}(\tilde a^+_n) - \tilde a^+_{pn}=
\begin{cases}
0,& \text{if $\rho \nmid pn$},\\
-\frac{\rho^3}\sigma a^+_{\nicefrac{\sigma pn}\rho},& \text{if $\rho \mid pn$, but $\rho\nmid n$},\\
\frac{\rho^3}\sigma \left( \mathrm{Frob}_p\left( a_{\nicefrac{\sigma n}\rho}^+ \right) -  a_{\nicefrac{\sigma pn}\rho}^+ \right),& \text{if $\rho\mid n$}.
\end{cases}
\end{align*}
For $\rho\nmid pn$, and $\rho\mid pn$ but $\rho\nmid n$, the local 3-function property at the prime $p$ for the coefficients $a_n^+$ is trivially satisfied.
In the following, we will assume $\ord_p(\rho)\leq \ord_p(n)$.
\begin{itemize}
\item[\textit{Step 1:}] \textit{Analogue to \Cref{3-function characterization}}.
Here, we only assume $V\in \mathcal{S}^3(K|\mathbb{Q})$ and set $X(z)= \mathrm{Frob}_p V(z^p) - V(z)$.
We have
\begin{align*}
\mathrm{Frob}_p \left( a_{\nicefrac{\sigma n}\rho}^+ \right) - a_{\nicefrac{p \sigma n}\rho}^+
= \left[\frac{\exp\left( \nu \frac{\sigma n}\rho p \smallint V(z) \right)}{z^{\nicefrac{p\sigma n}\rho}} \cdot \sum_{k=1}^\infty (\nu \sigma)^{k-1} \frac\sigma{k!}\left(\frac{np}\rho\right)^k \left( \smallint X(z) \right)^k \right]_0.
\end{align*}
For $p\geq 3$ and $k\geq 4$
\begin{align*}
\ord_p\left( \frac\sigma{k!}\left(\frac{np}\rho\right)^k\right) &\geq 3 \left( \ord_p\left(\frac n\rho \right) +1 \right) + \ord_p(\sigma).
\end{align*}
For $p = 2$, $k\geq 4$ and $\ord_2\left( \nu \frac\sigma\rho n \right) > 0$, we have
\begin{align*}
\ord_2\left( (\nu \sigma)^{k-1} \frac\sigma{k!}\left( \frac{2n}\rho \right)^k \right) 
&> 3 \ord_2\left( \frac n\rho \right) + 2 + \ord_2(\sigma).
\end{align*}
For $k=3$ we still have
\begin{align*}
\ord_p \left( (\nu \sigma)^2 \frac\sigma{3!} \left( \frac{pn}\rho\right)^3 \right)
= \begin{cases}
3(\ord_p\left( \frac n\rho \right) + 1 ) + \ord_p(\sigma),& \text{for $p\geq 5$, and} \\
3\left( \ord_p\left( \frac n\rho \right) +1 \right) + \ord_p(\sigma) - 1,& \text{for $p\in\{2,3\}$}.
\end{cases}
\end{align*}
Therefore, analogously to \cref{eq: framed 3-function p>3} and \cref{eq: framed 3-function p<5}, we obtain
\begin{align*}
\frac\rho{np\sigma} \cdot \left( \mathrm{Frob}_\mathfrak{p} \left( a_{\nicefrac{\sigma n}\rho}^+ \right) - a_{\nicefrac{p\sigma n}\rho}^+ \right)
&\equiv\\
&\hspace{-13em}  \left[ \frac{ \exp \left( \nu \frac\sigma\rho n p \smallint V(z) \right) }{z^{\nicefrac{p\sigma n}\rho}} \left( \smallint X(z) + \frac{\nu np \sigma }{2\rho} (\smallint X(z))^2\right)
\right]_0\mod p^{2 \left( \ord_p \left( \frac n\rho \right)+1 \right) -\epsilon_p}\mathcal{O}_\mathfrak{p},
\end{align*}
where $\epsilon_p=0$ for all $p\geq 5$, and $\epsilon_p=1$ for $p \in \{ 2 , 3 \}$.
By the same calculation as in the proof of \Cref{3-function characterization} we obtain
\begin{align*}
\frac{2 \rho^2}{p^2 n^2 \sigma} \cdot \left( \mathrm{Frob}_\mathfrak{p} \left( a^+_{\nicefrac{\sigma n}\rho} \right) - a^+_{\nicefrac{pn \sigma}\rho} \right)
\equiv \nu \sigma \left[ \delta \left( \mathrm{Frob}_\mathfrak{p} V(z^p) + V(z) \right)\times\vphantom{ \left( \frac{\exp( \nu \smallint V(z) )} z\right)^{\nicefrac{pn\sigma}\rho} }\right.\\
&\hspace{-26em} \times\left.  \left( \frac{\exp( \nu \smallint V(z) )} z \right)^{\nicefrac{pn\sigma}\rho} \cdot
\smallint \hspace{-0.25em}\,^3 \left( \mathrm{Frob}_\mathfrak{p} V(z^p) - V(z)\right) \right]_0 \hspace{-1em}\mod p^{ \ord_p \left( \frac n\rho \right) + 1 - \delta_{3,p}} \mathcal{O}_\mathfrak{p}.
\end{align*}

\item[\textit{Step 2:}] \textit{Analogue to \Cref{cor:3-function characterization}.}
From now on, we will additionally assume $V\in \mathcal{S}^\infty (K|\mathbb{Q})$.
Then the same calculation (i.e. by the partial integration principle \cref{intbyparts}) as in the proof of \Cref{cor:3-function characterization} leads directly the $p$-adic estimation
\begin{align*}
\frac{2\rho^2}{p^2n^2\sigma}\cdot \left( \mathrm{Frob}_\mathfrak{p} \left( a^+_{\nicefrac{n\sigma}\rho} \right) - a^+_{\nicefrac{pn \sigma}\rho} \right) &\equiv \nu \sigma \left[ V(z) \cdot \left( \frac{\exp(\nu \smallint V(z))}{z}\right)^{\nicefrac{p n \sigma}\rho} \right. \times\\
&\hspace{-2em} \left. \vphantom{\left( \frac{\exp(\nu \smallint V(z))}{z}\right)^{\nicefrac{p n \sigma}\rho}}\smallint\hspace{-0.25em}\,^2 \left(\mathrm{Frob}_\mathfrak{p}V(z^p) - V(z)\right)\right]_0 \mod p^{\ord_p \left( \frac n\rho \right) + 1 - \delta_{3,p}} \mathcal{O}_\mathfrak{p}.
\end{align*}

\item[\textit{Step 3:}] \textit{Analogue to the proof of \Cref{thm: thm 4.34}.}
We now assume $V\in \mathcal{S}^2_{\mathrm{rat}}(K|\mathbb{Q})\subset \mathcal{S}^\infty ( K | \mathbb{Q})_\mathrm{fin}$.
More precisely, we have $V \in \mathcal{S}^\infty (K|\mathbb{Q})_S$.
In the following we assume $p\nmid N$.
By \Cref{lemma: a trivial lemma}, we immediately notice
\begin{align*}
\exp\left( \nu \frac{pn \sigma}\rho \smallint V(z) \right) \equiv \exp\left( \nu \frac{n \sigma}\rho \sum_{k=1}^\infty \frac{a_{pk}}k z^{pk} \right) \mod p^{\ord_p \left( \nu \frac\sigma\rho n \right)+1} \mathcal{O}_\mathfrak{p}.
\end{align*}
For $\widetilde Y(z):= \exp\left( \nu \frac{n \sigma}\rho \sum_{k=1}^\infty \frac{a_{pk}}k z^{pk} \right)$ we have by \Cref{lemma: final countdown}, and since $\ord_p\left(\nu \frac\sigma\rho \right)\geq 0$,
\begin{align*}
\tilde y_m := \left[\tilde Y(z)\right]_m \equiv 0 \mod p^{\max \left\{ 0 , \ord_p \left( \nu \frac\sigma\rho n \right)  -\ord_p(m) \right\}}\mathcal{O}_\mathfrak{p}.
\end{align*}
By the previous two steps and some calculation
\begin{align}\label{eq: fractional fraction final estimation}
\frac{2\rho^2}{p^2n^2 \sigma}\cdot \left( \mathrm{Frob}_\mathfrak{p} \left( a^+_{\nicefrac{n\sigma}\rho} \right)- a^+_{\nicefrac{pn \sigma}\rho} \right) & \nonumber\\
&\hspace{-7em}\equiv \nu \sigma \sum_{m=0}^{\nicefrac{\sigma n}\rho} \tilde y_m \sum_{\substack{\ell = 1 \\ p\nmid \ell}}^{p(\frac{\sigma n}\rho - m)} \frac{a_{p(\frac{\sigma n}\rho - m)-\ell} a_\ell}{\ell^2} \mod p^{\ord_p\left( \frac n\rho \right) + 1 - \delta_{3,p}} \mathcal{O}_\mathfrak{p}.
\end{align}
We need to give an estimation for $x(m)$, $m=0, ... , \frac{\sigma n}\rho$, defined by
\begin{align}\label{eq: countdown}
x(m)= \ord_p \left( \nu \sigma \tilde y_m \sum_{\substack{\ell = 1 \\ p\nmid \ell}}^{p(\frac{\sigma n}\rho - m)} \frac{a_{p(\frac{\sigma n}\rho - m)-\ell} a_\ell}{\ell^2} \right).
\end{align}
For $m=0$ we have $\tilde y_0=1$ and by \Cref{lemma: general wolstenholme},
\begin{align*}
x(0)&\geq 
\min \big\{ \ord_p ( n ) - \ord_p ( \rho ) + 1 - \delta_{ 3 , p }, \max \{ 0, \ord_p ( n ) - \ord_p ( \rho ) +1 - \gamma_p \} \big\}\\
&=\max \left\{ 0, \ord_p\left( \frac n\rho \right) +1 - \gamma_p \right\}.
\end{align*}
Therefore, we may assume $m> 0$ in the following.
By \Cref{lemma: general wolstenholme} and \Cref{lemma: final countdown} we obtain (for $m>0$)
\begin{align*}
x(m)&\geq \min \left\{ \ord_p \left(\frac{pn}\rho \right) - \delta_{3,p}, \ord_p \left( \nu \sigma \right) + \max \left\{ 0, \ord_p\left( \frac{\nu n \sigma}{\rho m} \right) \right\} \right.\\
&\hspace{10em} \left. \vphantom{\left(\frac p\rho\right)}+ \max \left\{ 0, \ord_p \left( \frac{\sigma n}\rho - m \right) +1 - \gamma_p \right\} \right\} ,
\end{align*}
Taking into account, that $\ord_p(\nu \sigma)\geq 0$ for all unramified primes $p$, we immediately get
\begin{align*}
x= x(m)&\geq \min \left\{ \ord_p \left(\frac{pn}\rho \right) - \delta_{3,p},  \max \left\{ 0, \ord_p\left( \frac n{\rho m} \right) \right\}  \right.\\
&\hspace{5em}\left. \vphantom{\left(\frac p\rho\right)}+ \max \left\{ 0, \ord_p \left( \frac{\sigma n}\rho - m \right) +1 - \gamma_p \right\} \right\}.
\end{align*}
Recall that we assume $\rho\mid n$ and $\gcd(\sigma,\rho)=1$. In particular this means $\ord_p(\rho)\leq \ord_p(n)$ for all primes $p$.
\begin{itemize}

\item[•]
If $\ord_p\left( \frac n{\rho m}\right)\geq 0$ and $\ord_p\left( \frac{\sigma n}\rho - m \right) + 1 \geq \gamma_p$, then $\ord_p\left( \frac{\sigma n}\rho - m \right) = \ord_p(m)$ and hence
\begin{align*}
x&\geq \min\left\{ \ord_p\left( \frac{pn}\rho \right) - \delta_{3,p}, \ord_p\left( \frac n{\rho m} \right) + \ord_p(m) +1 -\gamma_p \right\}\\
&= \ord_p \left( \frac{pn}\rho\right) - \gamma_p\geq 0.
\end{align*}

\item[•]
If $\ord_p\left( \frac n{\rho m}\right)\geq 0$ and $\ord_p\left( \frac{\sigma n}\rho - m \right) + 1 < \varepsilon_{p,N}$, then $\ord_p\left( \frac{\sigma n}\rho - m \right) = \ord_p(m)$ and $-\ord_p(m) > 1- \gamma_p$.
Hence
\begin{align*}
x&\geq \min\left\{ \ord_p\left( \frac{pn}\rho \right) - \delta_{3,p}, \ord_p\left( \frac n\rho \right) - \ord_p(m) \right\}\\
&\geq \max\left\{ 0 , \min \left\{ \ord_p\left( \frac{pn}\rho \right) - \delta_{3,p}, \ord_p\left(\frac n\rho \right) + 1 - \gamma_p \right\}\right\}\\
&=\max\left\{ 0 , \ord_p\left(\frac {pn}\rho \right) - \gamma_p \right\}.
\end{align*}

\item[•]
If $\ord_p\left( \frac n{\rho m}\right) < 0$ and $\ord_p\left( \frac{\sigma n}\rho - m \right) + 1 \geq \gamma_p$, then, in particular,
\begin{align*}
\min\left\{ \ord_p\left( \frac{\sigma n}\rho \right) , \ord_p(m) \right\} \geq \ord_p \left( \frac n\rho \right).
\end{align*}
Hence,
\begin{align*}
x&\geq \min\left\{ \ord_p\left( \frac{pn}\rho \right) - \delta_{3,p}, \ord_p\left( \frac n{\rho m} \right) +1 -\gamma_p \right\}\\
&\geq \max\left\{ 0 , \min\left\{ \ord_p\left( \frac{pn}\rho \right) - \delta_{3,p}, \right.\right. \\
&\hspace{14em} \left.\left. \min\left\{ \ord_p\left( \frac{\sigma n}\rho \right) , \ord_p(m) \right\} + 1 - \gamma_p \right\} \right\}\\
&\geq \max \left\{0,  \min\left\{ \ord_p\left( \frac{pn}\rho \right) - \delta_{3,p}, \ord_p\left( \frac n\rho \right) + 1 - \gamma_p \right\}\right\}\\
&\geq  \max \left\{ 0 , \ord_p \left( \frac{pn}\rho \right) - \gamma_p \right\}.
\end{align*}

\item
If $\ord_p\left( \frac n{\rho m}\right) < 0$ and $\ord_p\left( \frac{\sigma n}\rho - m \right) + 1 < \gamma_p$, then $ x \geq 0$.
On the other hand,
\begin{align*}
\ord_p\left( \frac{pn}\rho \right) - \gamma_p &\leq \min\left\{ \ord_p\left( \frac{\sigma n}\rho \right), \ord_p(m) \right\} + 1 - \gamma_p\\
&\leq \ord_p\left( \frac{\sigma n}\rho -m \right) + 1 - \gamma_p < 0.
\end{align*}

\end{itemize}

Summarizing the above considerations, we obtain
\begin{align*}
\min_{m\in \{0, ..., \nicefrac{\sigma n}\rho\}} x(m) \geq \max\left\{ 0, \ord_p\left(\frac{pn}\rho\right) - \gamma_p \right\}.
\end{align*}
Therefore,
\begin{align*}
\nu\sigma \sum_{m=0}^{\nicefrac{\sigma n}\rho} \tilde y_m \sum_{\substack{\ell = 1 \\ p\nmid \ell}}^{p(\frac{\sigma n}\rho - m)} \frac{a_{p(\frac{\sigma n}\rho - m)-\ell} a_\ell}{\ell^2}
\equiv 0 \mod p^{\max\{ 0 , \ord_p \left( \frac{pn}\rho \right) - \gamma_p \} } \mathcal{O}_\mathfrak{p}.
\end{align*}
Consequently, by \cref{eq: fractional fraction final estimation}, we obtain -- except for the case $p=2$ and $\ord_2 \left( \nu \frac\sigma\rho n \right) = 0$ --
\begin{align*}
\mathrm{Frob}_\mathfrak{p} \left( a^+_{\nicefrac{n\sigma}\rho} \right)- a^+_{\nicefrac{pn \sigma}\rho} \equiv 0 \mod p^{ 2 \ord_p\left( \frac{pn}\rho \right) - \delta_{2,p} + \ord_p(\sigma) + \max \left\{ 0, \ord_p\left( \frac{pn}\rho \right) - \gamma_p \right\}} \mathcal{O}_\mathfrak{p}.
\end{align*}
\end{itemize}
This finishes the proof.
\end{proof}

\begin{example}[Jacobsthal-Kazandzidis] \label{ex: jacobsthal-kazandzidis}
Let $V(z)= \frac{z}{1-z}\in \mathcal{S}^2_{\mathrm{rat}}(\mathbb{Q})$ and $\nu = \frac \rho\sigma$ with $\rho,\sigma\in\mathbb{N}$, $\gcd(\rho,\sigma)=1$.
$V$ has periodicity $N=1$ and
\begin{align*}
\smallint V(z)= \smallint \left( \frac z{1-z} \right) = \sum_{k=1}^\infty \frac{z^k}k = - \log(1-z).
\end{align*}
As always, $a_n^+:= \left[\Phi^+(\nu, V) \right]_n$
Recall from \cref{eq: framed coefficients}
\begin{align*}
a_n^+ = \frac1\nu\left[\frac{\exp(\nu n \smallint V(z))}{z^n}\right]_0,\quad
\text{for all $n\in\mathbb{N}$}.
\end{align*}
Then
\begin{align*}
a_n^+= \frac1\nu\left[ \frac1{z^n} ( 1 - z )^{- \nu n} \right]_0.
\end{align*}
By the \textit{generalized Binomial Theorem} we have
\begin{align*}
(1-z)^{-\nu n} =\sum_{k=0}^\infty\binom{-\nu n}k (-1)^k z^k.
\end{align*}
Note that in this case the binomial coefficient is defined by
\begin{align}\label{eq: general binomial}
\binom{-\nu n}k = \frac1{k!} \prod_{j=0}^{k-1} (- \nu n -j).
\end{align}
Rewriting the binomial coefficient, we obtain
\begin{align*}
\binom{-\nu n }k &= \frac1{k!} \prod_{j=0}^{k-1} ( \nu n + k - 1 -  j )
= \frac{(-1)^k}{k!} \prod_{j=1}^k ( \nu n + k - j )\\
&= \frac{(-1)^k}{k!} \frac{\nu n}{\nu n + k} \prod_{j=0}^{k-1} ( \nu n + k - j )
= (-1)^k \frac{\nu n}{\nu n + k } \binom{\nu n + k}k.
\end{align*}
Therefore,
\begin{align*}
a_n^+ = \frac1\nu \left[ \frac1{z^n} \sum_{k=0}^\infty \frac{\nu n}{\nu n + k} \binom{\nu n + k}k z^k \right]_0
= \frac1{\nu + 1} \binom{(\nu + 1) n}n.
\end{align*}
In particular, for $\widetilde V(z) = \frac1\sigma \mathscr{C}_\sigma \left( \Phi^+(\nu, V) \right)$,
\begin{align*}
a^+_{\sigma n} = \left[ \widetilde V(z) \right]_n = \frac1{\rho+\sigma} \binom{ ( \rho + \sigma ) n }{\sigma n }.
\end{align*}
Applying \Cref{thm: fractional analogue of framed 3-functions} to  $\widetilde V(z) $ gives for all primes $p\geq 3$
\begin{align*}
  a_{\sigma n}^+ - a_{\sigma pn}^+ \equiv 0 \mod p^{3(\ord_p(n)+1)-\delta_{p,3}}\mathbb{Z}_p.
\end{align*}
On the other hand for $p\geq 3$
\begin{align*}
a_{\sigma n}^+ - a_{\sigma pn}^+ &= \frac1{\rho+\sigma} \left[ \binom{(\rho+\sigma)n}{\sigma n} - \binom{(\rho+\sigma)pn}{\sigma pn} \right].
\end{align*}
Therefore,
\begin{align*}
\binom{(\rho + \sigma)n}{\sigma n} - \binom{(\rho + \sigma )pn}{\sigma pn} &= ( \rho + \sigma ) \left( a_{\sigma n}^+ - a_{\sigma pn}^+ \right)\\
&\equiv 0 \mod p^{3(\ord_p(n)+1)- \delta_{p,3} + \ord_p(\rho + \sigma )}.
\end{align*}
Equivalently,
\begin{align}\label{eq: jacobsthal-kazandzidis from fractional framing}
\binom{(\rho + \sigma )pn}{\sigma pn} \equiv \binom{(\rho + \sigma )n}{\sigma n} \mod p^{3(\ord_p(n)+1)- \delta_{p,3} + \ord_p(\rho + \sigma )}.
\end{align}
Now we will prove the Theorem of Jacobsthal-Kazandzidis (see \Cref{thm: jacobsthal-kazandzidis}) for $p\geq 3$ as a consequence of \Cref{thm: fractional analogue of framed 3-functions}.
Fix a prime $p\geq 3$, let $a,b\in \mathbb{N}_0$ be non-negative integers and let $r\in \mathbb{N}$ be an integer.
W. l. o. g.,  let $b\leq a$ and $\gamma = \min\{\ord_p(a), \ord_p(b)\}$.
Then either $bp^{-\gamma}$ or $(a-b)p^{-\gamma}$ is a $p$-adic unit.
Since the binomial coefficient is symmetric (that is, $\binom ab$ invariant under the exchange $b \leftrightarrow a-b$), we may assume that $bp^{-\gamma}$ is a $p$-adic integer.
Then \Cref{thm: jacobsthal-kazandzidis} follows from \cref{eq: jacobsthal-kazandzidis from fractional framing} by setting $\sigma =p^{-\gamma}b$, $\rho = p^{-\gamma}(a-b)$ and $n=p^{\gamma+r-1}$, i.e.
\begin{align*}
\binom{a p^r}{b p^{r}}  &\equiv \binom{ap^{r-1}}{bp^{r-1}}  \mod p^{3(r+\gamma) -\delta_{p,3}}.
\end{align*}
\end{example}


\bibliographystyle{amsplain}

\end{document}